\newtheorem{theorem}{Theorem}[section]
\newtheorem{lemma}[theorem]{Lemma}
\newtheorem{proposition}[theorem]{Proposition}
\newtheorem{corollary}[theorem]{Corollary}
\DeclareMathOperator{\Aut}{Aut}
\DeclareMathOperator{\Sz}{Sz}
\DeclareMathOperator{\Syl}{Syl}
\DeclareMathOperator{\PGL}{PGL}
\DeclareMathOperator{\PSL}{PSL}
\DeclareMathOperator{\SL}{SL}
\newcommand{\sub}{\leqslant}
\newcommand{\nsub}{\nleqslant}
\newcommand{\Z}{\mathbb{Z}}
\newcommand{\N}{\mathcal{N}}
\newcommand{\C}{\mathcal{ C}}
\newcommand{\cyc}[1]{\langle #1\rangle}
\newcommand{\GAP}{{\rm GAP}}
\newcommand{\Core}{{\rm Core}}
\newcommand{\nor}{\trianglelefteq}
\newcommand{\nno}{\ntrianglelefteq}
\newcommand{\bk}{\backslash}
\begin{document}
\title[\scriptsize{Groups whose non-normal subgroups are either nilpotent...}]{Groups whose non-normal subgroups are either nilpotent or minimal non-nilpotent}

\author[Nasrin Dastborhan, Hamid Mousavi]{Nasrin Dastborhan, Hamid Mousavi$^{\ast}$}

\address{{Department of Pure Mathematics, Faculty of Mathematical Sciences, University of Tabriz, Tabriz, Iran}}
\email{hmousavi@tabrizu.ac.ir}
\email{n.dastborhan@tabrizu.ac.ir}

\subjclass[2020]{ 20F19, 20F22}
\keywords{Meta-$\mathfrak{Nil}$-Hamilponian,
 para-$\mathfrak{Nil}$-Hmiltonian}
\thanks{$^{\ast}$ Corresponding author}

\begin{abstract}
Let $\mathfrak{Nil}$ be the class of nilpotent groups and $G$ be a  group. We call $G$ a meta-$\mathfrak{Nil}$-Hamiltonian group if any of its non-$\mathfrak{Nil}$ subgroups is normal. Also, we call $G$ a para-$\mathfrak{Nil}$-Hamiltonian group if  $G$ is a non-$\mathfrak{Nil}$ group and every non-normal subgroup of $G$ is either a $\mathfrak{Nil}$-group or a minimal non-$\mathfrak{Nil}$ group. In this paper we investigate the class of finitely generated meta-$\mathfrak{Nil}$-Hamiltonian and para-$\mathfrak{Nil}$-Hamiltonian groups.
\end{abstract}
\maketitle
\section{Introduction}
Let $\mathfrak{X}$ be a class of groups. The group $G$ is said to be meta-$\mathfrak{X}$-Hamiltonian if any of its non-$\mathfrak{X}$ subgroups is normal. Also, we say that $G$ is para-$\mathfrak{X}$-Hamiltonian if $G$ is a non-$\mathfrak{X}$ group and every non-normal subgroup of $G$ is either an $\mathfrak{X}$-group or a minimal non-$\mathfrak{X}$-group. If the class $\mathfrak{X}$ is subgroup closed, then the non-$\mathfrak{X}$ condition is necessary.

For a class $\mathfrak{X}$  of groups, a group $G$ is said to be minimal non-$\mathfrak{X}$ if it is not an $\mathfrak{X}$-group but all its proper subgroups belong to $\mathfrak{X}$. Also, $G$ is called biminimal non-$\mathfrak{X}$ if it is neither an $\mathfrak{X}$-group nor a minimal non-$\mathfrak{X}$-group, but each proper subgroup of $G$ either belongs to $\mathfrak{X}$ or is a minimal non-$\mathfrak{X}$-group. Para-$\mathfrak{X}$-Hamiltonian groups are a natural extention of  biminimal non-$\mathfrak{X}$ groups.

If $\mathfrak{A}$ is the class of Abelian groups, then the class of meta-$\mathfrak{A}$-Hamiltonian is called metahamiltonian and the class of para-$\mathfrak{A}$-Hamiltonian groups is called parahamiltonian.
The data about metahamiltonian groups is not very precise. These groups were introduced by Romalis in 1962. Then Romalis and Seskin continued the study of these groups.
They proved that if a metahamiltonian group is soluble, then its derived length is at most three and its commutator subgroup is finite of prime power order\cite{RoSea1,RoSea2,RoSea3}. Then, these groups  attracted the attention of many researchers, as de Giovanni and others. For a number of propertises  pertaining metahamiltonian groups one can see \cite{BFT, FangAn}. 

Some researchers have extended the metahamiltonian groups in a different direction. Instead of changing the class of subgroups that are not normal, they changed the normality condition of non-abelian subgroups. In their study, \cite{KAS} explored the structure of groups whose non-abelian subgroups are subnormal. Similarly, \cite{BFT1} focused on groups whose subgroups are either abelian or pronormal. Lastly, \cite{BT} examined locally finite simple groups whose non-abelian subgroups are pronormal.

A group $G$ is called locally graded if every non-trivial finitely generated subgroup of $G$ contains a proper subgroup of finite index. The results of Romalis and Sesekin \cite{RoSea1,RoSea2,RoSea3} show that a locally graded metahamiltonian group is necessarily soluble. Therefore, it has a finite commutator subgroup. Also, an infinite locally graded group whose proper subgroups are metahamiltonian is metahamiltonian~\cite[Theorem 3.1]{FGM}. However, the metahamiltonian groups are not necessarily soluble. For example, Tarski groups, i.e. infinite simple groups whose proper non-trivial subgroups have prime order.

 Atlihan and de Giovanni showed that every locally graded biminimal non-abelian group is finite and its order is divisible by at most three prime numbers\cite[Theorem 1]{AtGio}. They also showed that $G/\Phi(G)\cong A_5$, for any finite insoluble biminimal non-abelian group $G$. We will show that such a group is necessarily isomorphic to $A_5$ (Corollary~\ref{C-3.8}). 

Parahamiltonian groups were first introduced by Atlihan and de Giovanni\cite{AtGio}.  They proved that the commutator subgroup of a locally graded parahamiltonian group is finite \cite[Lemma 5]{AtGio}. They also showed that a parahamiltonian group which is locally graded and insoluble is finite and $\pi(G)=\{2,3,5\}$. We will show that such a group is necessarily isomorphic to $A_5$ (Corollary~\ref{C-3.9}). 

Let $\mathfrak{Nil}$ be the class of nilpotent groups. The most natural case for metahamiltonian and parahamiltonian generalization are meta-$\mathfrak{Nil}$-Hamiltonian and para-$\mathfrak{Nil}$-Hamiltonian groups. In this paper we investigate the class of finitely generated meta-$\mathfrak{Nil}$-Hamilton and para-$\mathfrak{Nil}$-Hamilton groups. It will be proved that every finitely generated locally graded biminimal non-nilpotent group is finite and its order is divisible by at most three prime numbers and every finite insoluble biminimal non-nilpotent group is isomorphic to $A_5$. Also, any locally graded insoluble para-$\mathfrak{Nil}$-Hamiltonian group is isomorphic to $A_5$ or $\SL(2,5)$, if it has a finite minimal non-nilpotent subgroup. In particular, this applies to any para-$\mathfrak{Nil}$-Hamiltonian group that is insoluble and finite. In the case of  locally graded meta-$\mathfrak{Nil}$-Hamiltonian groups, we show that they are soluble. Some of our techniques are taken from \cite{AtGio}.

\section{Primary lemmas}
In this article, we will use finite non-abelian minimal simple groups and the structure  of subgroups of $\PSL(2,p)$ and the Suzuki groups $\Sz(q)$. So, in this section, we will discuss some important structure theorems.

\begin{theorem}\cite[Theorems 6.25 , 6.26]{Suzuki}\label{sub-psl}
Let $q$ be a power of the prime $p$. Then, a subgroup of $\PSL(2,q)$ is isomorphic to one of the following groups.
\begin{itemize}
\item[(i)] The dihedral groups of order $2(q\pm1)/d$ and their subgroups where $d=(2,q-1)$. 
\item[(ii)] A group $H$ of order $q(q-1)/d$ and its subgroups. A Sylow $p$-subgroups $P$ of $H$ is elementary abelian, $P\unlhd H$, and the factor group $H/P$ is a cyclic group of order $(q-1)/d$.
\item[(iii)] $A_4$ whenever $p\neq2$ or $n$ is even.
\item[(iv)] $S_4$ whenever $q^2\equiv 1 \pmod{16}$.
\item[(v)] $A_5$ whenever $q(q^2-1)\equiv 0\pmod{5}$.
\item[(vi)] $\PSL(2,r)$ where $r$ is a power of $p$ such that, $q=r^m$.
\item[(vii)] $\PGL(2,r)$ where $q=r^m$ is odd and $m$ is even.
\end{itemize}
\end{theorem}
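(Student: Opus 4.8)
Since this statement is the classical theorem of Dickson quoted from Suzuki's book, the plan is to sketch an independent proof from the natural action of $G=\PSL(2,q)$ on the projective line. Set $\Omega=\mathbb{P}^1(\mathbb{F}_q)$, so that $|\Omega|=q+1$ and $G$ acts faithfully and $2$-transitively on $\Omega$ with $|G|=q(q^2-1)/d$, where $d=(2,q-1)$. The first step is to record the element and local structure of $G$ by direct matrix computation in $\SL(2,q)$: every nontrivial element is unipotent (order $p$, one fixed point on $\Omega$), split semisimple (lying in a cyclic \emph{split} torus of order $(q-1)/d$, two fixed points), or nonsplit semisimple (lying in a cyclic \emph{nonsplit} torus of order $(q+1)/d$, no fixed point on $\Omega$). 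A Sylow $p$-subgroup $P$ is elementary abelian of order $q$, distinct Sylow $p$-subgroups intersect trivially, and $N_G(P)=P\rtimes C$ is a point stabiliser with $C$ cyclic of order $(q-1)/d$; the normaliser of a split (resp. nonsplit) torus is dihedral of order $2(q-1)/d$ (resp. $2(q+1)/d$). Finally, for every odd prime $\ell\neq p$ a Sylow $\ell$-subgroup is cyclic, and a Sylow $2$-subgroup is cyclic or dihedral.

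With this in hand I would analyse an arbitrary subgroup $H\sub G$ in two cases. \textbf{Case $p\mid|H|$.} Let $P_0\in\Syl_p(H)$. If $P_0\nor H$, then since Sylow $p$-subgroups are trivial-intersection and their normaliser is the Borel, one gets $H\sub N_G(P_0)\sub N_G(P)=B$; hence $H$ is a subgroup of $B=P\rtimes C$ with normal elementary abelian Sylow $p$-subgroup and cyclic complement, which is exactly type (ii). If instead $H$ has more than one Sylow $p$-subgroup, choose $P_1\neq P_2$ in $\Syl_p(H)$ and invoke Dickson's generation lemma: the subgroup generated by two distinct Sylow $p$-subgroups of $G$ is a subfield subgroup, and a careful count shows $\langle P_1,P_2\rangle$, hence $H$, is isomorphic to $\PSL(2,r)$ or $\PGL(2,r)$ for a subfield $\mathbb{F}_r\sub\mathbb{F}_q$ — types (vi) and (vii), with the parity condition on $m$ in (vii) coming from the index $[\PGL(2,r):\PSL(2,r)]=2$ and the analysis of when $\PGL(2,r)$ can embed in $\PSL(2,q)$.

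\textbf{Case $p\nmid|H|$.} Now every element of $H$ is semisimple. If $H$ fixes a point of $\Omega$, then $H$ embeds in a complement $C$ and is cyclic; if $H$ leaves a pair of points invariant, then $H$ lies in a torus normaliser and is cyclic or dihedral — these give the groups in (i). Otherwise $H$ fixes no point and stabilises no pair, and the whole difficulty is concentrated here. Using that the odd Sylow subgroups of $H$ are cyclic and the Sylow $2$-subgroup is cyclic or dihedral, together with a fixed-point count for involutions in the $2$-transitive action, I would bound the possible orders and conclude $H\cong A_4$, $S_4$, or $A_5$. The congruence conditions then appear as existence conditions: $A_5$ needs an element of order $5$, which exists precisely when $5\mid q(q^2-1)$ (type (v)); $S_4$ needs a cyclic subgroup of order $4$ inside a torus, forcing $q^2\equiv1\pmod{16}$ (type (iv)); and $A_4$, containing only $2$- and $3$-elements, survives exactly when $p\neq2$ or $n$ is even (type (iii)).

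The main obstacle, in both cases, is the ``irreducible'' situation in which $H$ neither normalises a Sylow $p$-subgroup nor stabilises a point or a pair. Controlling $\langle P_1,P_2\rangle$ in the case $p\mid|H|$ and eliminating all possibilities except $A_4,S_4,A_5$ in the case $p\nmid|H|$ both rest on delicate counting of fixed points and involutions in the $2$-transitive action, and on matching the resulting numerology against the congruence conditions in (iii)--(v); everything else reduces cleanly to the Borel and torus-normaliser structure recorded in the first step.
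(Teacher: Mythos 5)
The paper does not prove this statement at all: it is Dickson's classification of the subgroups of $\PSL(2,q)$, quoted verbatim with a citation to Suzuki (Theorems 6.25, 6.26), so there is no internal argument to compare with, and your proposal has to stand on its own as a proof of Dickson's theorem. Judged that way, it is a correct road map of the classical proof --- the element trichotomy (unipotent/split/nonsplit), the Borel and torus-normaliser structure, TI Sylow $p$-subgroups, and the case split $p\mid|H|$ versus $p\nmid|H|$ all match the standard treatments --- but it is not a proof, because the two steps carrying essentially all of the content of the theorem are precisely the ones you defer with phrases like ``a careful count shows'' and ``I would bound the possible orders.''

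Concretely: (1) In the case $p\mid|H|$ with $P_0$ non-normal, the generation lemma --- that $\langle P_1,P_2\rangle$ for distinct Sylow $p$-subgroups is a subfield group $\PSL(2,r)$ or $\PGL(2,r)$ --- is the heart of Dickson's argument and is invoked, not established. Worse, ``$\langle P_1,P_2\rangle$, hence $H$'' is a non sequitur: $H$ \emph{contains} $\langle P_1,P_2\rangle$, and one must separately rule out groups lying strictly between a subfield group and its normaliser in $\PSL(2,q)$ (for instance extensions by field automorphisms); this is exactly where the precise conditions in (vi)--(vii), including the parity of $m$, come from, and your sketch gives no mechanism for it. (2) In the case $p\nmid|H|$ with no fixed point and no invariant pair, the elimination of everything except $A_4$, $S_4$, $A_5$ is the other half of Dickson's proof; classically it rests on the structure theory of groups all of whose Sylow subgroups are cyclic (Burnside/Zassenhaus) combined with the dihedral Sylow $2$-analysis, none of which appears beyond the assertion that it can be done. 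A smaller but real gap of the same kind: for (iv) you argue that an element of order $4$ forces $q^2\equiv1\pmod{16}$, which gives necessity, but sufficiency --- that $S_4$ actually embeds under that congruence --- needs a separate construction. Since the paper only ever uses this theorem as a cited black box, reproving it is unnecessary for the paper's purposes; as a blind attempt, your outline identifies the right architecture but has genuine gaps at both crux points.
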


\begin{theorem}\cite[Theorem XI.3.10]{Huppert}\label{Sz}
We put $q=2^{2m+1}$, $r=2^m$ and $G=Sz(q)$.
\begin{itemize}
\item[(a)] $G$ possesses cyclic Hall subgroups $U_1$ and $U_2$ of orders $q+2r+1$ and $q-2r+1$.
\item[(b)]  If $1\neq u \in U_{i}$, $\C_{G}(u)=U_{i}$. Also $|\mathcal{N}_G (U_i):U_i |=4$  and $\mathcal{N}_G (U_i )=\cyc  {U_i,t_i}$ where $u^{t_i}=u^q$ for all $u\in U_i$. In particular, $\mathcal{N}_G (U_i )$ is a Frobenius group with Frobenius kernel $U_i$.
\end{itemize}
\end{theorem}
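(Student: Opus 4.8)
The statement is a classical structural fact about the Suzuki groups, so the natural plan is to read it off from the doubly transitive permutation representation of $G=\Sz(q)$ on the Suzuki ovoid $\Omega$, a set of $q^2+1$ points, whose one-point stabiliser is a Frobenius group $F$ of order $q^2(q-1)$ with kernel a Sylow $2$-subgroup $Q$ of order $q^2$ and cyclic complement of order $q-1$. Thus $|G|=q^2(q-1)(q^2+1)$, and $G$ is a Zassenhaus group: only the identity fixes three points, a $2$-element fixes exactly one point, an element of order dividing $q-1$ fixes exactly two, and everything else is fixed-point-free. The arithmetic input is the factorisation $q^2+1=(q+2r+1)(q-2r+1)$, valid because $q=2r^2$. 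Writing $n_1=q+2r+1$, $n_2=q-2r+1$ and $\pi_i=\pi(n_i)$, the two factors are odd and coprime (their difference $4r$ is a $2$-power), and $q^2+1$ is coprime to $q(q-1)$ since $q$ is even and $\gcd(q^2+1,q-1)=\gcd(2,q-1)=1$. In particular $\gcd(n_i,|F|)=1$.

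For part (a) I would first note that, because $n_i$ is coprime to the stabiliser order $q^2(q-1)$, every nontrivial $\pi_i$-element lies outside all point stabilisers and hence acts fixed-point-freely, i.e. semiregularly, on $\Omega$. Consequently any $\pi_i$-subgroup acts semiregularly and so has order dividing $|\Omega|=q^2+1=n_1n_2$; being a $\pi_i$-group, its order divides $n_i$. This bounds a Hall $\pi_i$-subgroup from above by $n_i$. To produce one of order exactly $n_i$ and to prove it cyclic is the point that genuinely needs the explicit model: in Suzuki's realisation inside $\GL(4,q)$ the elements of order dividing $q^2+1$ are simultaneously diagonalisable over $\mathrm{GF}(q^4)$ with eigenvalues that are $(q^2+1)$-th roots of unity, and the two subgroups $U_1,U_2$ are exactly the two cyclic tori singled out by the order-$4$ field map $x\mapsto x^{2r}$; alternatively one reads cyclicity and the element orders off Suzuki's conjugacy-class count. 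The semiregular bound above then certifies that each cyclic $U_i$ of order $n_i$ is indeed a Hall $\pi_i$-subgroup.

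For part (b) I would argue geometrically from the fixed-point trichotomy. Let $1\neq u\in U_i$, so $u$ is fixed-point-free. If $x\in\C_G(u)$ were a $2$-element it would fix a unique point $\alpha$, and $u$, commuting with $x$, would permute $\mathrm{Fix}(x)=\{\alpha\}$ and hence fix $\alpha$, a contradiction; if $x\in\C_G(u)$ had order dividing $q-1$ it would fix two points, which the odd-order $u$ would then fix, again a contradiction. Hence $\C_G(u)$ consists entirely of fixed-point-free elements, is therefore semiregular with order dividing $q^2+1$, and contains the abelian group $U_i$; since $u$ is a $\pi_i$-element, coprimality of $n_1$ and $n_2$ forces $\C_G(u)=U_i$. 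For the normaliser, $\N_G(U_i)/U_i$ embeds in $\Aut(U_i)$, which is abelian because $U_i$ is cyclic; the field element $t_i$ induces $u\mapsto u^q$, and since $q^2\equiv-1\pmod{n_i}$ this automorphism has order exactly $4$, giving $|\N_G(U_i):U_i|=4$ and $\N_G(U_i)=\cyc{U_i,t_i}$. As $u\mapsto u^q$ fixes no nontrivial element of $U_i$, the complement $\cyc{t_i}$ acts fixed-point-freely, so $\N_G(U_i)$ is Frobenius with kernel $U_i$.

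The main obstacle, as flagged, is part (a): the fixed-point trichotomy cheaply yields that each $U_i$ is a semiregular Hall subgroup and controls its order and centralisers, but establishing existence, cyclicity, and the exact normaliser index $4$ requires genuine access to Suzuki's explicit construction (or, equivalently, his character-theoretic determination of the group). This is precisely why the result is quoted here from Huppert rather than reproved, and I would treat the construction as the black box supplying cyclicity while deriving the Hall, centraliser, and Frobenius assertions from the permutation action as above.
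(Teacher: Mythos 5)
This statement is pure background: the paper quotes it verbatim from Huppert \cite[Theorem XI.3.10]{Huppert} and contains no proof of it at all, so there is no internal argument to compare yours against --- the paper's ``approach'' is citation, and your closing remark correctly identifies why. On its own merits, your sketch follows the standard route (the $2$-transitive action on the ovoid of $q^2+1$ points, the Zassenhaus fixed-point trichotomy, the factorisation $q^2+1=(q+2r+1)(q-2r+1)$, with Suzuki's explicit model black-boxed for existence, cyclicity, and the element $t_i$), and most of what you do argue is sound: the semiregularity bound for $\pi_i$-subgroups, the coprimality computations, and the exclusion of $2$-elements and elements of order dividing $q-1$ from $\C_G(u)$ are all correct.

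There is, however, one step that does not follow from what you have established: the claim that ``coprimality of $n_1$ and $n_2$ forces $\C_G(u)=U_i$.'' Your fixed-point argument shows $\C_G(u)$ is semiregular, so $|\C_G(u)|$ divides $q^2+1=n_1n_2$, and $U_i\sub \C_G(u)$ gives $n_i\mid |\C_G(u)|$; but nothing so far excludes a nontrivial $\pi_{3-i}$-element $x$ commuting with $u$. Such an $x$ would make $\cyc{u,x}$ an abelian semiregular group whose order still divides $n_1n_2$, so no contradiction arises from the permutation action alone. What is really needed is that $\Sz(q)$ contains no element of order divisible by primes from both $n_1$ and $n_2$ --- equivalently, that centralizers of fixed-point-free elements are $\pi_i$-groups --- and in Huppert's treatment this comes out of the explicit construction and conjugacy-class analysis; it belongs in your black box alongside cyclicity. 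Likewise, the order-$4$ computation for $u\mapsto u^q$ (valid, since $q^2\equiv-1\pmod{n_i}$) only gives $|\N_G(U_i):U_i|\geq 4$ once $t_i\in G$ is produced, and the upper bound $|\N_G(U_i):U_i|\leq 4$ is again not a consequence of $\N_G(U_i)/U_i$ embedding in the abelian group $\Aut(U_i)$ --- though you flag this item yourself. With those two points explicitly consigned to the black box, your sketch is a faithful outline of the standard proof, and black-boxing is entirely appropriate given that the paper itself imports the whole theorem.
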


\begin{theorem}\cite[Corollary 1]{Thompson}\label{min-sim}
Every finite minimal simple group is isomorphic to one of the following minimal simple groups: 
\begin{itemize}
\item[(a)] $\PSL(2,2^p)$, $p$ any prime. 
\item[(b)] $\PSL(2,3^p)$, $p$ any odd prime. 
\item[(c)] $\PSL(2,p)$, $p$ any prime exceeding $3$ such that $p^2+1\equiv 0 \pmod{5}$
\item[(d)] $\Sz(2^p)$, $p$ any odd prime. 
\item[(e)] $\PSL(3,3)$.
\end{itemize}
\end{theorem}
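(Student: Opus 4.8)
The plan is to treat this as a special case of the classification of $N$-groups, since a minimal simple group is precisely a nonabelian finite simple group all of whose proper subgroups are soluble. The first observation is that every local subgroup of such a $G$, that is, every normaliser $\N_G(P)$ of a nontrivial $p$-subgroup $P$, is a \emph{proper} subgroup (as $G$ is simple and nonabelian) and hence soluble; so $G$ is an $N$-group in the strongest sense. Accordingly the whole argument is carried by local analysis, and the isomorphism type of $G$ is pinned down from the structure of its $2$-local subgroups --- the centralisers and normalisers of involutions --- together with the structure of a Sylow $2$-subgroup $S$.

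The core of the work is the odd-local analysis. For each odd prime $p\in\pi(G)$ one studies whether the $p$-local subgroups admit normal $p$-complements, using the Thompson factorisation together with the Glauberman $ZJ$-theorem; this records, prime by prime, the ``characteristic'' of $G$ and severely restricts which products of local subgroups can occur. The technical engine here is signalizer functor theory and the transitivity theorems, which control how the Sylow subgroups for the various primes are intertwined. The output of this stage is a short list of admissible structures for $S$: one shows that $S$ must be elementary abelian, dihedral, or of Suzuki type.

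Next comes the identification step, in which each admissible $2$-structure is matched to a known simple group by a standard characterisation theorem: a dihedral Sylow $2$-subgroup forces $G\cong\PSL(2,q)$ with $q$ odd (Gorenstein--Walter), or $A_7$, which is not minimal simple; an abelian Sylow $2$-subgroup forces $G\cong\PSL(2,q)$ with $q$ even or $q\equiv 3,5\pmod 8$ (Walter), the remaining groups on Walter's list being non-minimal; a Suzuki-type Sylow $2$-subgroup forces $G\cong\Sz(q)$; and a separate small-case analysis produces the exceptional group $\PSL(3,3)$. At this point the possible isomorphism types are exactly the families $\PSL(2,q)$, $\Sz(q)$ and $\PSL(3,3)$.

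Finally I would trim these families to the genuinely minimal simple ones using the subgroup structure recorded in Theorem~\ref{sub-psl} and Theorem~\ref{Sz}. For $\PSL(2,q)$ minimality fails exactly when it contains a proper nonsoluble subgroup, which by Dickson's list means either a subfield subgroup $\PSL(2,r)$ with $\mathbb{F}_r\subsetneq\mathbb{F}_q$ nontrivial, or a copy of $A_5$ (which occurs iff $5\mid q(q^2-1)$). Eliminating subfield subgroups forces the exponent of $q$ to be prime, and eliminating $A_5$ yields precisely $q=2^p$, $q=3^p$ with $p$ odd, and $q=p$ prime with $p^2+1\equiv 0\pmod 5$; this last congruence is just the condition $p\not\equiv\pm1\pmod 5$ that blocks a proper $A_5$. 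The Suzuki family is trimmed the same way, since $\Sz(2^n)$ contains $\Sz(2^k)$ whenever $k\mid n$, so minimality forces $n=p$ an odd prime, while $\PSL(3,3)$ is checked directly to have only soluble proper subgroups. I expect the overwhelming obstacle to be the odd-local analysis and the classification of the Sylow $2$-structure --- this is Thompson's original and enormously long argument --- whereas the identification and the final arithmetic trimming are comparatively routine.
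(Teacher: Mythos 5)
This statement is not proved in the paper at all: it is imported verbatim as \cite[Corollary 1]{Thompson}, so there is no internal argument to compare yours against. Your proposal is, in substance, the same move --- it defers the heart of the matter (the $N$-group analysis) to Thompson's ``original and enormously long argument'' --- wrapped in a roadmap of that analysis plus a final trimming step. The trimming step is the only part that is genuinely checkable, and it is essentially correct: Dickson's list (Theorem~\ref{sub-psl}) shows the insoluble proper subgroups of $\PSL(2,q)$ are subfield groups and $A_5$, eliminating subfield subgroups forces a prime exponent, and the congruence analysis for $A_5\leq\PSL(2,p)$ (namely $5\mid p(p^2-1)$ iff $p\equiv 0,\pm1\pmod 5$) yields exactly case (c); note only that $p^2+1\equiv 0\pmod 5$ excludes $p=5$ as well as $p\equiv\pm1$, which is needed not to ``block a proper $A_5$'' but to avoid relisting $\PSL(2,5)\cong\PSL(2,4)$, already covered by (a). The Suzuki trimming via subfield subgroups $\Sz(2^k)\leq\Sz(2^n)$ for $k\mid n$ (with $\Sz(2)$ soluble) is likewise fine.

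The roadmap itself, however, contains a step that is false as stated: the claim that the local analysis forces a Sylow $2$-subgroup to be elementary abelian, dihedral, or of Suzuki type. $\PSL(3,3)$ has semidihedral Sylow $2$-subgroups of order $16$, so this trichotomy would eliminate case (e) outright; the semidihedral (and, in the full $N$-group theorem, wreathed/quasidihedral) configurations must be carried through the identification stage, where they produce $\PSL(3,3)$, $U_3(3)$ and $M_{11}$, the latter two then discarded as non-minimal since they contain $\PSL(2,7)$ and $\PSL(2,11)$ respectively --- your ``separate small-case analysis'' gestures at this but contradicts your own stated trichotomy. You should also be aware that your telling is anachronistic as an account of Thompson's proof: signalizer functor theory is Gorenstein's later systematization, and the Gorenstein--Walter and Walter characterisations postdate or run parallel to the 1968 announcement you are citing (Thompson's own treatment stratifies by the odd $2$-local structure, $e(G)$, across the six-part series). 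None of this affects the paper, which correctly treats Theorem~\ref{min-sim} as a black box; but as a standalone proof your text is a citation with commentary, not an argument, and the one structural assertion you do commit to would fail.
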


\begin{theorem}\cite[12.1.5]{Rob}
If $M$ is a maximal subgroup of a locally nilpotent group $G$, then $M$ is normal in $G$. Equivalently $G'\leqslant \Phi(G)$.
\end{theorem}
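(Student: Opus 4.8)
The plan is to prove that every maximal subgroup $M$ of a locally nilpotent group $G$ is normal, and then read off the equivalent formulation $G'\leqslant\Phi(G)$. First, for the equivalence, I would use that $\Phi(G)$ is the intersection of all maximal subgroups of $G$. If each maximal $M$ is normal, then by the correspondence theorem $G/M$ is nontrivial with no proper nontrivial subgroups, hence cyclic of prime order and in particular abelian, so $G'\leqslant M$; intersecting over all maximal subgroups gives $G'\leqslant\Phi(G)$. Conversely, if $G'\leqslant\Phi(G)$, then every maximal subgroup $M$ satisfies $G'\leqslant\Phi(G)\leqslant M$, and any subgroup containing the derived subgroup is normal, so $M\trianglelefteq G$. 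Thus the two assertions coincide, and it suffices to treat maximal subgroups.

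So fix a maximal subgroup $M$. Since $M\leqslant N_G(M)\leqslant G$ and $M$ is maximal, either $N_G(M)=G$, in which case $M\trianglelefteq G$ and we are done, or $N_G(M)=M$; I would assume the latter and seek a contradiction. Replacing $G$ by $G/\Core_G(M)$, which is again locally nilpotent and in which the image of $M$ is a self-normalizing, core-free maximal subgroup, I may assume $M$ is core-free. One clean observation to keep in mind is that for a maximal subgroup the inclusion $M^x\subseteq M$ forces $M^x=M$, since a conjugate of $M$ is again maximal and so cannot sit strictly inside the proper subgroup $M$; hence $\{x\in G:\ M^x\leqslant M\}=N_G(M)=M$.

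The case $Z(G)\neq 1$ is then immediate: as $M$ is core-free no nontrivial normal subgroup can lie in $M$, so $Z(G)\nleqslant M$, and any $z\in Z(G)\setminus M$ centralizes, hence normalizes, $M$, contradicting $N_G(M)=M$. This reduces the problem to the centerless case $Z(G)=1$, and that is where I expect the real difficulty to lie. Local nilpotence is meant to enter through finitely generated subgroups: for $g\notin M$ and $m\in M$ the group $\cyc{g,m}$ is finitely generated, hence nilpotent, so the left-normed commutators $[g,{}_{k}m]$ vanish for large $k$, which yields for each single $m$ an element $w\notin M$ with $[w,m]\in M$. The main obstacle will be to upgrade this purely local, one-element-at-a-time information into a single element of $G\setminus M$ that normalizes all of $M$: excluding a centerless, core-free, self-normalizing maximal subgroup genuinely requires the structure theory of locally nilpotent groups (the Hirsch--Plotkin radical coinciding with $G$, together with the behaviour of Engel elements and of minimal normal subgroups), since locally nilpotent groups need not satisfy the full normalizer condition.

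Finally, I would record the decisive simplification available in the present paper, where every group in sight is finitely generated: a finitely generated locally nilpotent group is nilpotent, being one of its own finitely generated subgroups. For nilpotent $G$ every proper subgroup is subnormal and therefore strictly contained in its normalizer, so $M<N_G(M)$ immediately forces $N_G(M)=G$; the centerless obstruction never arises, since a nontrivial nilpotent group has nontrivial center. Hence in the finitely generated setting the statement is immediate, while the assertion in full generality is the cited theorem of Robinson.
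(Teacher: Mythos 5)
You should first note that the paper contains no proof of this statement at all: it is quoted verbatim from Robinson \cite[12.1.5]{Rob} as background, so the benchmark is the standard textbook argument. Measured against that, your attempt has a genuine gap, which to your credit you flag yourself: after reducing to a core-free, self-normalizing maximal subgroup $M$ with $Z(G)=1$, the argument stops. The one-element observation that $\cyc{g,m}$ is nilpotent, so $[g,{}_k m]=1$ for large $k$, is never upgraded to anything global, and no mechanism is offered for producing an element of $G\setminus M$ normalizing $M$. Moreover, your diagnosis that closing this case ``genuinely requires'' the Hirsch--Plotkin radical and Engel structure theory is wrong, and it points at the strategic misstep: you are trying to enlarge $N_G(M)$, whereas the standard proof aims directly at $G'\leqslant M$ and then local nilpotence closes the argument in a few lines.

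Concretely: if $G'\nsub M$, then, since $G'$ is generated by commutators, there are $x,y\in G$ with $[x,y]\notin M$, and maximality gives $G=\cyc{M,[x,y]}$. Hence $x$ and $y$ are words in $[x,y]^{\pm1}$ and finitely many elements $m_1,\dots,m_k$ of $M$, so $H=\cyc{m_1,\dots,m_k,[x,y]}=\cyc{m_1,\dots,m_k,x,y}$ is finitely generated, hence nilpotent. Writing $K=\cyc{m_1,\dots,m_k}$ we get $H=KH'$ because $[x,y]\in H'\nor H$; for nilpotent $H$ this forces $H=K$ (equivalently, $H'\leqslant\Phi(H)$ holds for finitely generated nilpotent $H$, which is exactly the case of the theorem you did prove correctly via the normalizer condition). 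Thus $x,y\in K\leqslant M$ and $[x,y]\in M$, a contradiction; so every maximal subgroup contains $G'$, is normal of prime index, and $G'\leqslant\Phi(G)$ follows (trivially if $G$ has no maximal subgroups). Your preliminary equivalence argument and your finitely generated case are both correct, but your closing remark that in this paper ``every group in sight is finitely generated'' does not hold: Lemma~\ref{L-5.1} applies the theorem to a locally nilpotent biminimal non-nilpotent group with no finite generation hypothesis, so the general statement really is needed.
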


\section{The finite case}
Let $G$ be a finite insoluble group. If $G$ is a para-$\mathfrak{Nil}$-Hamiltonian group or a biminimal non-nilpotent group, then we show that in any case $G$ is isomorphic to $\SL(2,5)$ or $A_5$.

\begin{lemma}\label{L-3.1}
Let $G$ be a finite group such that $G/\Phi(G)\cong\Z_p\rtimes\Z_q$, where $p$ and $q$ are distinct primes. Then, for natural numbers $m$ and $n$, $G\cong\Z_{p^m}\rtimes\Z_{q^n}$.
\end{lemma}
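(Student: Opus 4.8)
The plan is to show that $G$ is a $\{p,q\}$-group of the form $P\rtimes Q$ with $P$ a cyclic normal Sylow $p$-subgroup and $Q$ a cyclic Sylow $q$-subgroup. Write $\bar G=G/\Phi(G)\cong\Z_p\rtimes\Z_q$. If the action is trivial then $\bar G\cong\Z_{pq}$ is cyclic; since a finite group with cyclic Frattini quotient is cyclic (lift a generator and discard the non-generators in $\Phi(G)$), $G$ itself is cyclic and $G\cong\Z_{p^m}\rtimes\Z_{q^n}$ with trivial action. So I may assume $\bar G$ is the non-abelian group of order $pq$, in which the Sylow $p$-subgroup $\Z_p$ is normal. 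I would first check that $G$ is a $\{p,q\}$-group: $\Phi(G)$ is nilpotent, so each of its Sylow subgroups is characteristic in $\Phi(G)$ and hence normal in $G$; if a prime $r\notin\{p,q\}$ divided $|G|$ then (as $r\nmid|\bar G|=pq$) the Sylow $r$-subgroup $R$ of $\Phi(G)$ would be a normal Hall subgroup, so by Schur--Zassenhaus it has a complement $H$, and since $R\sub\Phi(G)$ consists of non-generators, $G=\cyc{R,H}=H$, forcing $R=1$, a contradiction.

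Next let $P$ and $Q$ be Sylow $p$- and $q$-subgroups. The image of $P$ is the normal Sylow $p$-subgroup of $\bar G$, so $P\Phi(G)\nor G$ with $P$ a Sylow $p$-subgroup of it; the Frattini argument gives $G=P\Phi(G)\,\N_G(P)=\Phi(G)\,\N_G(P)$, and discarding the non-generators in $\Phi(G)$ yields $G=\N_G(P)$, i.e.\ $P\nor G$. Then Schur--Zassenhaus gives $G=P\rtimes Q$. Writing $\Phi(G)=\Phi_p\times\Phi_q$ with $\Phi_p\sub P$ and $\Phi_q\sub Q$ the Sylow subgroups of $\Phi(G)$, the isomorphism $\bar G\cong\Z_p\rtimes\Z_q$ forces $|P:\Phi_p|=p$ and $|Q:\Phi_q|=q$, so $\Phi_p$ and $\Phi_q$ are maximal in $P$ and $Q$ respectively, whence $\Phi(P)\sub\Phi_p$ and $\Phi(Q)\sub\Phi_q$.

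It remains to prove $P$ and $Q$ are cyclic. For $Q$ the argument is cheap: if $y\in Q\setminus\Phi(Q)$, pick a maximal subgroup $T$ of $Q$ with $y\notin T$; then $PT$ is a subgroup (as $P\nor G$) of prime index $q$, hence maximal in $G$, and $PT\cap Q=T$ by Dedekind's modular law, so $y\notin PT\supseteq\Phi(G)$. This shows $\Phi_q=Q\cap\Phi(G)\sub\Phi(Q)$, hence $\Phi(Q)=\Phi_q$ and $Q/\Phi(Q)\cong\Z_q$, so $Q$ is cyclic. For $P$ this imitation fails, since a maximal subgroup of $P$ need not be $Q$-invariant; instead I would pass to $V=P/\Phi(P)$, an $\mathbb{F}_p$-space on which $Q$ acts linearly (as $\Phi(P)$ is characteristic in $P$). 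Because $\Phi(P)\sub\Phi_p\sub\Phi(G)$ we have $\Phi\bigl(G/\Phi(P)\bigr)=\Phi(G)/\Phi(P)$ and $G/\Phi(P)=V\rtimes Q$; a routine analysis of its maximal subgroups (those not containing $V$ correspond to maximal $\mathbb{F}_pQ$-submodules of $V$) identifies $V\cap\Phi\bigl(G/\Phi(P)\bigr)$ with the radical of $V$ as an $\mathbb{F}_pQ$-module. Since $q\neq p$, Maschke's theorem makes $V$ semisimple, so this radical is $0$; as the hypothesis forces it to have index $p$ in $V$ (it equals $\Phi_p/\Phi(P)$, of index $|P:\Phi_p|=p$), we conclude $\dim_{\mathbb{F}_p}V=1$, i.e.\ $P$ is cyclic. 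Combining, $G=P\rtimes Q\cong\Z_{p^m}\rtimes\Z_{q^n}$.

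The main obstacle is precisely the cyclicity of the Sylow subgroups, equivalently the identities $\Phi(G)\cap P=\Phi(P)$ and $\Phi(G)\cap Q=\Phi(Q)$. The non-normal factor $Q$ is controlled by a single prime-index maximal subgroup, but the normal factor $P$ genuinely needs the coprime action of $Q$ on $P/\Phi(P)$ together with Maschke's theorem in order to rule out a Sylow $p$-subgroup of rank at least two; establishing that the $\Phi(G/\Phi(P))\cap V$ really is the module radical is the delicate point of the whole argument.
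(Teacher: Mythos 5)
Your proposal is correct, and for most of its length it runs parallel to the paper's own proof: both establish $P\nor G$ by applying the Frattini argument to $P\Phi(G)\nor G$, and both prove $Q\cap\Phi(G)=\Phi(Q)$ by intersecting $Q$ with the maximal subgroups $PT$ (for $T$ maximal in $Q$) via Dedekind's modular law — your treatment of $Q$ is essentially verbatim the paper's. You additionally verify two things the paper passes over in silence, namely that $G$ is a $\{p,q\}$-group and the degenerate case of trivial action; both are routine but worth recording. The genuine divergence, and the interesting point, is the cyclicity of $P$. The paper argues: if $\Phi(P)\neq P\cap\Phi(G)$, choose a maximal subgroup $M$ of $P$ with $P=M(P\cap\Phi(G))$, so that $G=M(P\cap\Phi(G))Q$, ``thus $G=MQ$, a contradiction.'' As written, this is precisely the weak step you flagged: discarding the Frattini factor from a set product yields only $G=\cyc{M,Q}$, and since $M$ need not be $Q$-invariant, $MQ$ need not be a subgroup and $\cyc{M,Q}=\cyc{M^Q}Q$ could a priori be all of $G$; nor can one always repair this by choosing $M$ to be $Q$-invariant, since in the hypothetical bad configuration (with $W=(P\cap\Phi(G))/\Phi(P)$ irreducible of dimension at least $2$) the only $Q$-invariant hyperplane of $V=P/\Phi(P)$ is $W$ itself, which fails to avoid $P\cap\Phi(G)$. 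Your Maschke argument — identifying $V\cap\Phi\bigl(G/\Phi(P)\bigr)$ with the radical of $V$ as an $\mathbb{F}_pQ$-module, which is zero by coprimality yet has index $p$ by hypothesis — closes this gap rigorously; your correspondence between maximal subgroups not containing $V$ and maximal submodules is the standard one and checks out. An equivalent, slightly shorter repair in the paper's spirit: take a $Q$-invariant complement $U$ to $W$ in $V$ (Maschke again), let $P_0$ be its preimage in $P$; then $P_0Q$ is a proper subgroup with $P_0Q\,\Phi(G)=G$, contradicting the non-generator property. In sum, your route buys local rigor at the one delicate step at the cost of a module-theoretic detour, while the paper's version is shorter but, at that step, incomplete as stated.
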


\begin{proof}
Let $P\in\Syl_p(G)$ and $Q\in\Syl_q(G)$. Since $P\Phi(G)\nor G$, by Frattini argument, $P\nor G$, so  $\Phi(P)\sub P\cap\Phi(G)$. If $\Phi(P)\neq P\cap\Phi(G)$, then there is a maximal subgroup $M$ of $P$ such that $P=M(P\cap\Phi(G))$ and so $G=PQ=M(P\cap\Phi(G))Q$, thus $G=MQ$, a contradiction. Therefore $\Phi(P)=P\cap\Phi(G)$ and so $P/\Phi(P)\cong \dfrac{P\Phi(G)}{\Phi(G)}\cong \Z_p$, hence $P$ is cyclic.

Since $\frac{Q}{Q\cap\Phi(G)}\cong \Z_q$, $Q\cap\Phi(G)$ is maximal in $Q$, so $\Phi(Q)\sub Q\cap\Phi(G)$. Now for any maximal subgroup $M$ of $Q$, $PM$ is maximal in $G$. As
\[Q\cap\Phi(G)\sub Q\cap PM=(P\cap Q)M=M,\]
then $Q\cap\Phi(G)\sub\Phi(Q)$. Therefore,  $\Phi(Q)=Q\cap\Phi(G)$. So, similar to the previous case, $Q/\Phi(Q)\cong \Z_q$ and so $Q$ is cyclic. Hence, for some $n,m\in \mathbb{N}$,
\[G=PQ\cong \Z_{p^n}\rtimes \Z_{q^m}\;\&\;
 \Phi(G)=\Z_{p^{n-1}}\times \Z_{q^{m-1}}.\]
\end{proof}

\begin{lemma}\label{L-3.2}
Let $G$ be a finite group and $\Phi(G)\sub H\sub G$. If $H$ and $H/\Phi(G)$ are minimal non-nilpotent, then $\Phi(G)\sub\Phi(H)$.
\end{lemma}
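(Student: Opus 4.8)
The plan is to combine the classical structure theorem for minimal non-nilpotent (Schmidt) groups with two standard features of Frattini subgroups: that $\Phi(G)$ is nilpotent, and that $\Phi(P)\sub\Phi(H)$ whenever $P\nor H$. Write $F=\Phi(G)$. Since $F\nor G$ we have $F\nor H$, and $F$ is nilpotent. By Schmidt's theorem the minimal non-nilpotent group $H$ decomposes as $H=P\rtimes Q$, where $P=O_p(H)$ is the unique (hence normal) Sylow $p$-subgroup, $Q$ is a cyclic Sylow $q$-subgroup, the quotient $P/\Phi(P)$ is an irreducible $Q$-module (a chief factor of $H$), and $C_Q(P)=\Phi(Q)\sub Z(H)$; in particular $\Phi(H)=\Phi(P)\Phi(Q)$, so both $\Phi(P)\sub\Phi(H)$ and $\Phi(Q)\sub\Phi(H)$.

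Because $F$ is nilpotent and normal in $H$, it is the direct product $F=(F\cap P)\times F_q$ of its Sylow subgroups, both characteristic in $F$ and hence normal in $H$; here $F\cap P$ is the Sylow $p$-subgroup of $F$, since $P$ is the unique Sylow $p$-subgroup of $H$. I would then bound the two factors separately. For the $p$-part, the image $(F\cap P)\Phi(P)/\Phi(P)$ is a $Q$-submodule of the irreducible module $P/\Phi(P)$, so it is either trivial or everything. If it were everything then $(F\cap P)\Phi(P)=P$, whence $F\cap P=P$ by the non-generator property of $\Phi(P)$, so $P\sub F$ and $H/F$ would be a $q$-group, contradicting that $H/F$ is (minimal) non-nilpotent. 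Hence $F\cap P\sub\Phi(P)\sub\Phi(H)$.

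For the $q$-part, $F_q$ is a normal $q$-subgroup, so $[F_q,P]\sub F_q\cap P=1$ and thus $F_q\sub C_H(P)$. Since $C_H(P)=Z(P)\times\Phi(Q)$ has $q$-part exactly $\Phi(Q)$, the $q$-subgroup $F_q$ must lie in $\Phi(Q)\sub\Phi(H)$. Combining the two bounds gives $F=(F\cap P)\times F_q\sub\Phi(H)$, as required. The one general fact worth isolating as a separate step is the inclusion $\Phi(P)\sub\Phi(H)$ for $P\nor H$: were it false there would be a maximal $M<H$ with $\Phi(P)\nsub M$, hence $H=\Phi(P)M$ and then $P=\Phi(P)(P\cap M)$ by Dedekind's law, forcing $P=P\cap M\sub M$ by the non-generator property of $\Phi(P)$, a contradiction.

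I expect the main obstacle to be the $q$-part rather than the more transparent irreducibility argument for the $p$-part: one must verify that every normal $q$-subgroup of a Schmidt group is central and in fact lies in $\Phi(Q)$, and this is exactly where the Schmidt identity $C_Q(P)=\Phi(Q)\sub Z(H)$ is essential. Everything else is bookkeeping of Sylow decompositions together with the two applications of the non-generator property of Frattini subgroups.
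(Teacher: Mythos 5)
Your argument is correct, but it reaches the conclusion by a much heavier road than the paper, whose entire proof is three lines: if $\Phi(G)\nsub\Phi(H)$, choose a maximal subgroup $M$ of $H$ with $\Phi(G)\nsub M$; then $H=M\Phi(G)$, and since $M$ is a \emph{proper} subgroup of the minimal non-nilpotent group $H$ it is nilpotent, whence $H/\Phi(G)\cong M/(M\cap\Phi(G))$ is nilpotent, contradicting the non-nilpotency of $H/\Phi(G)$. So the paper needs only the definition of the Frattini subgroup, the nilpotency of proper subgroups of $H$, and closure of nilpotency under quotients; it never invokes Schmidt's structure theorem, the nilpotency of $\Phi(G)$, or any Sylow analysis, and the same argument would prove the analogue for any quotient-closed class in place of $\mathfrak{Nil}$ (moreover only non-nilpotency, not minimality, of $H/\Phi(G)$ is used---a feature your proof shares, since you invoke it only to rule out $P\sub\Phi(G)$). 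What your route buys in exchange is finer information: you locate $\Phi(G)$ precisely inside $\Phi(P)\times\Phi(Q)$, with the $p$-part controlled by irreducibility of the chief factor $P/\Phi(P)$ and the $q$-part by the Schmidt identity $C_Q(P)=\Phi(Q)\sub Z(H)$. The one step you should make explicit in the $q$-part is why the $q$-elements of $C_H(P)$ lie in $\Phi(Q)$ rather than merely in some conjugate of $Q$: a $q$-element of $C_H(P)$ lies in a Sylow subgroup $Q^x$, and $C_{Q^x}(P)=C_Q(P)^x=\Phi(Q)^x=\Phi(Q)$ precisely because $\Phi(Q)$ is central, which identifies the $q$-part of the normal subgroup $C_H(P)$ with $\Phi(Q)$ as you claimed. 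Both proofs are sound; for this lemma the paper's maximal-subgroup argument is the proportionate one, and your structural analysis is accurate but overkill, best kept in reserve for situations where one actually needs to know where $\Phi(G)$ sits inside $\Phi(H)=\Phi(P)\times\Phi(Q)$.
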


\begin{proof}
Let $M$ be a maximal subgroup of $H$. If $\Phi(G)\nsub M$, then $H=M\Phi(G)$ and so $H/\Phi(G)\cong M/(M\cap\Phi(G))$ is nilpotent, which is a contradiction. So $\Phi(G)\sub\Phi(H)$.
\end{proof}

\begin{theorem}\label{T-3.3}
Let $G$ be a finite group such that $G/\Phi(G)\cong A_5$. If $G$ is  para-$\mathfrak{Nil}$-Hamiltonian, then $G\cong A_5$ or $G\cong\SL(2,5)$.
\end{theorem}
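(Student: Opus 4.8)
The plan is to show that $\Phi(G)$ is a central subgroup of order at most $2$ and that $G$ is perfect, so that $G$ becomes a perfect central extension of $A_5$, which by the theory of the Schur multiplier must be $A_5$ itself or its double cover $\SL(2,5)$. First I would record two facts that hold immediately. Since $G/\Phi(G)\cong A_5$ is perfect, we have $G'\Phi(G)=G$, and because $\Phi(G)$ consists of non-generators this forces $G'=G$, so $G$ is perfect. Next, among the subgroups of $A_5$ the groups $A_4$, $D_{10}$ and $S_3$ are non-normal (as $A_5$ is simple), are minimal non-nilpotent, and have trivial Frattini subgroup. I would fix $X\in\{A_4,D_{10},S_3\}$ and let $H$ be its full preimage in $G$, so that $\Phi(G)\sub H$ and $H/\Phi(G)\cong X$. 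Because $X$ is not normal in $A_5$, $H$ is not normal in $G$; and since $H/\Phi(G)$ is not nilpotent, neither is $H$. The para-$\mathfrak{Nil}$-Hamiltonian hypothesis then forces $H$ to be minimal non-nilpotent.

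The second step pins down $\Phi(H)$. Applying Lemma~\ref{L-3.2} to the pair $H$, $H/\Phi(G)$ (both minimal non-nilpotent) gives $\Phi(G)\sub\Phi(H)$; conversely the image of $\Phi(H)$ in $H/\Phi(G)$ lies in $\Phi(H/\Phi(G))=\Phi(X)=1$, so $\Phi(H)\sub\Phi(G)$, whence $\Phi(H)=\Phi(G)$. Now I invoke the classical structure of minimal non-nilpotent (Schmidt) groups: such a group has order divisible by exactly two primes and satisfies $\Phi(H)=Z(H)$. Running $X$ through the three groups, the two-prime property shows $\Phi(G)$ is simultaneously a $\{2,3\}$- and a $\{2,5\}$-group, hence a $2$-group. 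Taking $X=A_4$ gives $\Phi(G)=Z(H)$, so every element of $H=\tilde{A_4}$ centralizes $\Phi(G)$; moreover $|H|$ and $|G|$ have the same $3$-part (both equal $3\,|\Phi(G)|_3$, since $12$ and $60$ share the $3$-part $3$), so $H$ contains a full Sylow $3$-subgroup of $G$.

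Third, I would promote this to centrality. The centralizer $C_G(\Phi(G))$ is normal in $G$ and, by the previous step, contains a Sylow $3$-subgroup of $G$; since all Sylow $3$-subgroups are conjugate, it contains every $3$-element of $G$. But $A_5=G/\Phi(G)$ is generated by its elements of order $3$, so the $3$-elements of $G$ generate $G$ modulo $\Phi(G)$, and the Frattini property then yields $G=\langle\,\text{$3$-elements}\,\rangle\sub C_G(\Phi(G))$. Therefore $\Phi(G)\sub Z(G)$, and since $Z(G)/\Phi(G)\sub Z(G/\Phi(G))=Z(A_5)=1$ we obtain $Z(G)=\Phi(G)$.

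Finally, $G$ is a perfect central extension of $A_5$ with central kernel $\Phi(G)=Z(G)$. By the theory of the Schur multiplier, a perfect central extension is an epimorphic image of the universal central extension, which for $A_5$ is $\SL(2,5)$ with kernel $M(A_5)\cong\Z_2$; consequently $\Phi(G)$ is a quotient of $\Z_2$, so $|\Phi(G)|\in\{1,2\}$, giving $G\cong A_5$ or $G\cong\SL(2,5)$. I expect the main obstacle to be the centrality step: the mechanism that drives it is the equality $\Phi(H)=Z(H)$ for Schmidt groups together with the Sylow-$3$ conjugacy argument, and the point that must be checked carefully is that the full preimage $H$ genuinely contains a Sylow $3$-subgroup of $G$ (via the $3$-part comparison), rather than only a proper $3$-subgroup.
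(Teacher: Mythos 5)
Your proof is correct, and it follows the paper's overall skeleton --- pull back small non-normal minimal non-nilpotent subgroups of $A_5$ through $\Phi(G)$, use Lemma~\ref{L-3.2} to identify $\Phi(G)$ with the Frattini subgroups of these preimages, establish $Z(G)=\Phi(G)$ with $G$ perfect, and finish with $M(A_5)\cong\Z_2$ --- but you execute the middle steps by genuinely different means. The paper never touches the $A_4$-preimage and never invokes $\Phi(H)=Z(H)$: it applies its Lemma~\ref{L-3.1} to the preimages of $S_3$ and $D_{10}$ to exhibit them as $\Z_{3^m}\rtimes\Z_{2^k}$ and $\Z_{5^n}\rtimes\Z_{2^l}$, concluding that $\Phi(G)$ is a \emph{cyclic} $2$-group, whereupon centrality is immediate since the automorphism group of a cyclic $2$-group is abelian and $G$ is perfect. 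You bypass cyclicity entirely: the $2$-group property comes from the two-prime property of Schmidt groups, and centrality from the classical identity $\Phi(S)=Z(S)$ for a Schmidt group $S$ (applied to the $A_4$-preimage) together with the Sylow-$3$ conjugacy and generation argument; the point you flagged as delicate is in fact fine, since $|H|_3=3\,|\Phi(G)|_3=|G|_3$ holds on the nose, so $H$ does contain a full Sylow $3$-subgroup. The one thing to note is that $\Phi(S)=Z(S)$ (indeed $Z(S)=\Phi(S)=\Phi(P)\times\cyc{y^q}$) is an external classical fact, not proved in the paper, and should carry a citation (e.g.\ Huppert, \emph{Endliche Gruppen~I}, Satz~III.5.2, or R\'edei's classification); it is true, so there is no gap. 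Your universal-central-extension finish is equivalent to the paper's appeal to the exact sequence $M(G)\to M(G/Z(G))\to Z(G)\to 1$ from Karpilovsky, and is arguably cleaner: it delivers $A_5$ or $\SL(2,5)$ directly, whereas the paper still has to inspect the three insoluble groups of order $120$.
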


\begin{proof}
Suppose that $H$ and $K$ are two subgroups of $G$ such that $H/\Phi(G)\cong S_3$ and $K/\Phi(G)\cong D_{10}$. As $H$ and $K$ are not nilpotent so are minimal non-nilpotent. According to Lemma \ref{L-3.2}, $\Phi(G)$ is contained both in $\Phi(H)$ and in $\Phi(K)$. Since $H$ and $K$ are noncyclic, therefore, $\Phi(H)=\Phi(K)=\Phi(G)$. Suppose that $H=\cyc{x,t_1}$ and $K=\cyc{y,t_2}$ where $t_1$ and $t_2$ are $2$-elements, $|x|=3^m$ and $|y|=5^n$, where $m,n\in N$. According to Lemma ~\ref{L-3.1}, $\Phi(H)=\cyc{x^3,t_1^2}$, $\Phi(K)=\cyc{y^5,t_2^2}$. Therefore, $x^3=y^5=1$ and $\Phi(G)$ is a cyclic $2$-subgroup. Since, $G/\C_G(\Phi(G))$ is abelian, thus $G=\C_G(\Phi(G))$ and so $Z(G)=\Phi(G)$. As $G$ is perfect, the sequence
\[M(G)\to M(G/Z(G))\to Z(G)\to 1,\]
is exact by ~\cite[Theorem 2.5.6]{Karpilovsky}, where $M(G)$ is the Schur multiplier of $G$. As $M(A_5)\cong\Z_2$, then  $|Z(G)|=2$ and $|G|=120$. Among the three insoluble groups of order 120, only $\SL(2,5)$ satisfies all conditions.
\end{proof}

\begin{lemma}\label{L-3.4}
Let $G$ be a finite minimal simple group. If $G$ is biminimal non-nilpotent, then $G\cong A_5$.
\end{lemma}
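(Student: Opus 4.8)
The plan is to invoke Thompson's classification of finite minimal simple groups (Theorem~\ref{min-sim}) and to show that every group on that list except $\PSL(2,4)\cong A_5$ (the case $p=2$ of family~(a)) fails to be biminimal non-nilpotent; since the hypotheses place $G$ on the list, the conclusion $G\cong A_5$ follows. To discard a given $G$ it suffices to produce a single proper subgroup $H\sub G$ that is neither nilpotent nor minimal non-nilpotent, since such an $H$ contradicts the definition of biminimal non-nilpotency. Two elementary facts will drive the argument. First, a nontrivial Frobenius group is never nilpotent (its centre is trivial), and if $K\rtimes C$ is Frobenius then $K\rtimes C_0$ is again Frobenius for every $1\neq C_0\sub C$. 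Second, the dihedral group $D_{2n}$ of order $2n$ is nilpotent precisely when $n$ is a power of $2$ and minimal non-nilpotent precisely when $n$ is an odd prime; hence $D_{2n}$ is an admissible witness exactly when $n$ is neither a power of $2$ nor an odd prime.

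For the defining-characteristic families (a) and (b) I would use the dihedral subgroups of Theorem~\ref{sub-psl}(i). In family (a), with $q=2^p$ and $p$ odd, I take the dihedral subgroup of order $2(q+1)$: here $n=2^p+1$ is odd, is divisible by $3$ (because $p$ is odd), and exceeds $3$, so it is neither a power of $2$ nor a prime, and $H$ is a witness. In family (b), with $q=3^p$ and $p$ odd, I take the dihedral subgroup of order $q+1=3^p+1$: since $3^p+1\equiv4\pmod 8$ we have $n=(3^p+1)/2=2s$ with $s$ odd and $s>1$, so $n$ is even and not a power of $2$, again a witness. Only the case $p=2$ of family (a), namely $\PSL(2,4)\cong A_5$, escapes this treatment, and it is the group that survives.

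Family (d), $G=\Sz(2^p)$, is uniform via Theorem~\ref{Sz}: the subgroup $N_G(U_i)=\cyc{U_i,t_i}=U_i\rtimes\cyc{t_i}$ is Frobenius with cyclic kernel $U_i$ and complement $\cyc{t_i}$ of order $4$. Since $|U_i|$ divides $q^2+1$ and $\gcd(q^2+1,q^2-1)=1$, while $t_i^2$ acts on $U_i$ by $u\mapsto u^{q^2}$, the element $t_i^2$ still acts fixed-point-freely; thus $U_i\rtimes\cyc{t_i^2}$ is a proper non-nilpotent (Frobenius) subgroup of $N_G(U_i)$, so $N_G(U_i)$ is non-nilpotent but not minimal non-nilpotent and is the required witness. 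Family (e), $G=\PSL(3,3)=\SL(3,3)$, is handled by the classical fact that it contains a subgroup isomorphic to $\GL(2,3)$ (the Levi complement of a point stabiliser): this $\GL(2,3)$ is non-nilpotent and properly contains the non-nilpotent group $\SL(2,3)$, hence is not minimal non-nilpotent.

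The delicate family is (c), $G=\PSL(2,p)$ with $p>5$ prime and $p^2+1\equiv0\pmod5$; note that then $5\nmid|G|$, so no $A_5$ is available as a witness. I argue by a trichotomy on $m=(p-1)/2\geq3$. If $m$ is not prime, then the cyclic complement $\Z_m$ of the Borel subgroup $\Z_p\rtimes\Z_m$ (Theorem~\ref{sub-psl}(ii)) has a proper nontrivial subgroup, yielding a proper non-nilpotent Frobenius subgroup of the Borel, which is a witness. If $m$ is an odd prime, I pass to the dihedral subgroup of order $p+1$, for which $n=(p+1)/2=m+1$ is even and at least $4$; this is a witness unless $n$ is a power of $2$. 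The one remaining case, $m$ an odd prime together with $(p+1)/2=2^k$, forces $p=2^{k+1}-1\equiv-1\pmod8$, so Theorem~\ref{sub-psl}(iv) guarantees $S_4\sub\PSL(2,p)$, and $S_4$ is non-nilpotent and properly contains $A_4$, so it is the witness. The main obstacle is exactly this last step: one must verify that the trichotomy is exhaustive and that $S_4$ becomes available precisely when the Borel and both dihedral subgroups have failed, which follows from the implication $(p+1)/2=2^k\geq4\Rightarrow p\equiv-1\pmod8$.
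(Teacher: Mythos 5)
Your proof is correct, and although it shares the paper's skeleton — reduce to Thompson's list via Theorem~\ref{min-sim} and exclude each family by exhibiting a proper ``witness'' subgroup that is neither nilpotent nor minimal non-nilpotent — several of your local arguments take a genuinely different route, and two of them improve on the paper. For family (b) the paper argues that both $(3^p\pm1)/2$ must be prime or a power of $2$, disposes of the power-of-$2$ case by producing $S_4$ from $q^2\equiv1\pmod{16}$, and then gets a contradiction from the two consecutive primes; you instead observe directly that $3^p+1\equiv4\pmod{8}$, so $n=(3^p+1)/2$ is even but not a power of $2$ and $D_{3^p+1}$ is an immediate witness — shorter, with no case split. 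For family (c) the paper forces both $(p\pm1)/2$ to be prime and concludes $p=5$; your trichotomy on $m=(p-1)/2$ handles composite $m$ via a Frobenius subgroup $\Z_p\rtimes C_0$ inside the Borel subgroup, where the paper stays with dihedral subgroups throughout. One small caveat there: the Frobenius property of the Borel (equivalently, fixed-point-freeness of the action of $\Z_m$ on $\Z_p$) is standard but is not contained in the statement of Theorem~\ref{sub-psl}(ii), so it deserves a one-line justification (e.g.\ that a Sylow $p$-subgroup of $\PSL(2,p)$ is self-centralizing). On the other hand, your explicit treatment of the subcase $(p+1)/2=2^k$, $k\geq2$, is more careful than the paper's phrase ``power of $2$ greater than $8$,'' which as literally written does not cover $p=7$, where $(p+1)/2=4$; your implication $p=2^{k+1}-1\equiv-1\pmod 8\Rightarrow p^2\equiv1\pmod{16}$ supplies the $S_4$ witness there. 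Finally, for $\PSL(3,3)$ the paper resorts to a GAP computation, whereas your Levi subgroup $\GL(2,3)$, which properly contains the non-nilpotent $\SL(2,3)$, gives a computer-free exclusion; family (d) is essentially identical in both proofs, since your $U_i\rtimes\cyc{t_i^2}$ is exactly the paper's dihedral subgroup of order $2n_i$ ($t_i^2$ inverts $U_i$ because $|U_i|$ divides $q^2+1$).
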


\begin{proof}
Assume that $G\cong\PSL(2,2^p)$ with $p$ prime. According to Theorem~\ref{sub-psl}, $G$ has two dihedral subgroups of orders $2(2^p-1)$ and $2(2^p+1)$. These subgroups are minimal non-nilpotent if both $2^p-1$ and $2^p+1$ are prime. Since $2^p+1$ is prime whenever $p=2$, therefore $G\cong\PSL(2,4)\cong A_5$ .

Suppose that $G\cong\PSL(2,3^p)$ for an odd prime $p$. Similar to the previous case, $G$ has two dihedral subgroups of orders $3^p-1$ and $3^p+1$. So both $(3^p\pm 1)/2$ are either prime or a power of $2$ greater than $8$. Let  
$(3^p+1)/2=2^{\ell}$ where $\ell\geq 3$. So $q=3^p=2^{\ell+1}-1$. Therefore 
$$q^2=3^{2p}=2^{2\ell +2}-2^{\ell+2}+1\equiv 1\pmod{16}.$$ 
Thus by Theorem ~\ref{sub-psl}(iv), $G$ has a subgroup isomorphic to $S_4$ that is neither nilpotent nor minimal non-nilpotent. Similarly, if $3^p-1$ is a power of $2$, again we will reach to contradiction caused by the existence of a subgroup isomorphic to $S_4$. So both $(3^p\pm 1)/2$ are prime. According to the relation 
$$(3^p+1)/2=(3^p-1)/2+1$$
necessarily $(3^p-1)/2=2$, a contradiction.

Now suppose that $G\cong\PSL(2,p)$ for a prime $p$. Similar to the above argument, $p\pm 1$ is not a power of $2$ greater than $8$, so $(p\pm 1)/2$ is necessarily prime. Since $(p+1)/2=(p-1)/2+1$, $p-1=4$ and so $p=5$, which contradicts $p\neq 5$ by Theorem~\ref{min-sim}(c). 

Using the $\GAP$  software~\cite{GAP}, it is determined that $G\not\cong\PSL(3,3)$. Finally assume that $G\cong\Sz(2^p)$, where $p$ is an odd prime. By Theorem~\ref{Sz}, $G$ has two cyclic Hall subgroups $U_1$ and $U_2$ of odd orders $n_1=2^p+2^{m+1}+1$ and $n_2=2^p-2^{m+1}+1$, respectively, where $m=(p-1)/2$. Also, $\N_G(U_i)$ for $i=1,2$ is a Frobenius group with kernel $U_i$, which is non-nilpotent. (The center of a Frobenius group is trivial). Since $|\N_G(U_i): U_i|=4$, then $\N_G(U_i)$ has a dihedral subgroup of order $2n_i$, which is non-nilpotent. Therefore, $\N_G(U_i)$ is not a minimal non-nilpotent. So $G\not\cong \Sz(2^p)$.
\end{proof}

\begin{lemma}\label{L-3.5}
Let $G$ be a finite minimal insoluble para-$\mathfrak{Nil}$-Hamiltonian group. Then, either $G\cong A_5$ or $G\cong \SL(2,5)$.
\end{lemma}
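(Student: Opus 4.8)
The plan is to analyze $G$ through its soluble radical and then reduce everything to the results already established. Let $R$ denote the soluble radical of $G$, i.e.\ its largest normal soluble subgroup; since $G$ is insoluble we have $R\neq G$, and since $\Phi(G)$ is nilpotent we have $\Phi(G)\sub R$. First I would show that $\overline G:=G/R$ is a minimal simple group. It is insoluble with trivial soluble radical (the preimage of any soluble normal subgroup of $\overline G$ would be a soluble normal subgroup of $G$, hence inside $R$), and every proper subgroup of $\overline G$ has the form $H/R$ with $H<G$, so it is soluble because $G$ is minimal insoluble. A minimal normal subgroup of $\overline G$ is therefore a direct product of nonabelian simple groups; were it proper it would be an insoluble proper subgroup, a contradiction, so $\overline G$ is itself such a direct product, and a second factor would again yield an insoluble proper subgroup. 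Hence $\overline G$ is simple with all proper subgroups soluble, i.e.\ one of the minimal simple groups of Theorem~\ref{min-sim}.

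Next I would transfer the para-$\mathfrak{Nil}$-Hamiltonian hypothesis down to $\overline G$ in order to force $\overline G\cong A_5$ through Lemma~\ref{L-3.4}. Every proper nontrivial subgroup of $\overline G$ is $H/R$ with $R<H<G$; since $H/R$ is a proper nontrivial subgroup of the simple group $\overline G$, the subgroup $H$ is \emph{not} normal in $G$, so the para-$\mathfrak{Nil}$-Hamiltonian condition makes $H$ either nilpotent or minimal non-nilpotent. A quotient of a nilpotent group is nilpotent, while a non-nilpotent quotient of a minimal non-nilpotent (Schmidt) group is again minimal non-nilpotent, because its proper subgroups are images of proper, hence nilpotent, subgroups. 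Thus $H/R$ is nilpotent or minimal non-nilpotent for every such $H$. As $\overline G$ is nonabelian simple it is neither nilpotent nor, being insoluble, minimal non-nilpotent, so $\overline G$ is biminimal non-nilpotent, and Lemma~\ref{L-3.4} gives $\overline G\cong A_5$.

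Finally I would identify $R$ with $\Phi(G)$ so as to invoke Theorem~\ref{T-3.3}. We already have $\Phi(G)\sub R$. If this inclusion were proper, some maximal subgroup $M$ of $G$ would fail to contain $R$, whence $G=MR$ and $A_5\cong G/R\cong M/(M\cap R)$; but $M$ is a proper, hence soluble, subgroup of $G$, so $M/(M\cap R)$ cannot be isomorphic to $A_5$. Therefore $R=\Phi(G)$ and $G/\Phi(G)\cong A_5$, and since $G$ is para-$\mathfrak{Nil}$-Hamiltonian, Theorem~\ref{T-3.3} yields $G\cong A_5$ or $G\cong\SL(2,5)$. I expect the genuine obstacle to be the middle step, namely verifying that the para-$\mathfrak{Nil}$-Hamiltonian property survives passage to $\overline G$; this rests on the behaviour of quotients of Schmidt groups, whereas the reductions to the soluble radical and then to $\Phi(G)$ are comparatively routine.
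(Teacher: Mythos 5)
Your proof is correct and takes essentially the same route as the paper: pass to a minimal simple quotient, identify it as $A_5$ via Lemma~\ref{L-3.4}, and conclude with Theorem~\ref{T-3.3}. The paper's three-line proof simply cites the standard fact that $G/\Phi(G)$ is minimal simple for a finite minimal insoluble group and leaves tacit the transfer of the para-$\mathfrak{Nil}$-Hamiltonian condition to the quotient, whereas you reconstruct the former through the soluble radical (then showing $R=\Phi(G)$) and correctly supply the latter, your Schmidt-quotient step being sound since every proper subgroup of $H/R$ is the image of a proper, hence nilpotent, subgroup of $H$.
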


\begin{proof}
Since $G$ is minimal insoluble, $G/\Phi(G)$ is a minimal simple group. Therefore by Lemma~\ref{L-3.4} $G/\Phi(G)\cong A_5$. Now the statement follows from Theorem~\ref{T-3.3}
\end{proof}

\begin{theorem}\label{T-3.6}
Let $G$ be a finite insoluble para-$\mathfrak{Nil}$-Hamiltonian group. Then, $G\cong A_5$ or $\SL(2,5)$. In particular, the unique prime divisors of the order of G are $2$, $3$ and $5$.
\end{theorem}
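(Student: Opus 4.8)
The plan is to reduce to the minimal insoluble case already settled in Lemma~\ref{L-3.5} by locating a well-behaved subgroup and then proving it is everything. First I would choose a subgroup $H$ of minimal order among the insoluble subgroups of $G$; then every proper subgroup of $H$ is soluble, so $H$ is minimal insoluble. The key observation is that $H$ inherits the para-$\mathfrak{Nil}$-Hamiltonian property: if $L\sub H$ is not normal in $H$, then $L$ cannot be normal in $G$, since a subgroup normal in $G$ is normal in every subgroup containing it; hence by hypothesis $L$ is nilpotent or minimal non-nilpotent. As $H$ is insoluble it is non-nilpotent, so $H$ is a finite minimal insoluble para-$\mathfrak{Nil}$-Hamiltonian group and Lemma~\ref{L-3.5} gives $H\cong A_5$ or $H\cong\SL(2,5)$.

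Next I would upgrade $H$ to a normal subgroup of $G$. Both $A_5$ and $\SL(2,5)$ are neither nilpotent nor minimal non-nilpotent, since each contains a proper non-nilpotent subgroup (namely $A_4$ and $\SL(2,3)$ respectively); hence the para-$\mathfrak{Nil}$-Hamiltonian hypothesis forces $H\nor G$. The heart of the argument is then to show $\C_G(H)=Z(H)$. Fix a non-normal, non-nilpotent subgroup $A$ of $H$ containing $Z(H)$ (take $A\cong A_4$ when $H\cong A_5$ and $A\cong\SL(2,3)$ when $H\cong\SL(2,5)$), and let $c\in\C_G(H)$. If $c\notin A$, then $B=A\cyc{c}$ is a subgroup properly containing the non-nilpotent group $A$, so $B$ is neither nilpotent nor minimal non-nilpotent and therefore $B\nor G$; but $\cyc{c}\cap H\sub\C_G(H)\cap H=Z(H)\sub A$ forces $B\cap H=A$, and normality of $B$ and of $H$ then makes $A=B\cap H$ normal in $H$, a contradiction. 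Hence $\C_G(H)\sub A\sub H$, and since every element of $\C_G(H)$ centralises $H$ we conclude $\C_G(H)\sub Z(H)$, giving equality.

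Finally I would pass to $\overline G=G/\C_G(H)$. Because $\C_G(H)=Z(H)$, conjugation embeds $\overline G$ into $\Aut(H)$, and for both $H\cong A_5$ and $H\cong\SL(2,5)$ one has $\Aut(H)\cong S_5$ with $\mathrm{Inn}(H)\cong A_5$ of index two; as $\overline G$ contains $H\C_G(H)/\C_G(H)\cong A_5$, either $\overline G\cong A_5$ or $\overline G\cong S_5$. In the first case $H\C_G(H)=G$, and since $\C_G(H)=Z(H)\sub H$ this yields $G=H$, as desired. The case $\overline G\cong S_5$ is excluded by producing a forbidden subgroup: the full preimage $T$ in $G$ of a point stabiliser $S_4\sub S_5$ is non-normal (being the preimage of a non-normal subgroup), is non-nilpotent, and contains the preimage of $A_4$ as a proper non-nilpotent subgroup, so $T$ is neither nilpotent nor minimal non-nilpotent, contradicting the hypothesis. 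Thus $G\cong A_5$ or $G\cong\SL(2,5)$, and the prime divisors follow from $|A_5|=2^2\cdot3\cdot5$ and $|\SL(2,5)|=2^3\cdot3\cdot5$. I expect the main obstacle to be the centralizer computation $\C_G(H)=Z(H)$ together with the exclusion of the $S_5$ quotient; the passage to a minimal insoluble subgroup and its normality are routine once the right test subgroups are chosen.
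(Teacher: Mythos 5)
Your proposal is correct and takes essentially the same route as the paper's proof: reduce to a minimal insoluble subgroup and identify it as $A_5$ or $\SL(2,5)$ via Lemma~\ref{L-3.5}, force $\C_G(H)=Z(H)$ by exhibiting a forbidden non-normal subgroup (your $A\cyc{c}$ plays the role of the paper's $A_4\times\C_G(N)$, resp.\ $A_5\times\C_G(N)/Z$ in $G/Z$), and then embed $G/\C_G(H)$ into $\Aut(H)\cong S_5$ by the $N/C$-theorem. The only differences are expository: the paper obtains normality of the minimal insoluble subgroup at once from the solubility of finite minimal non-nilpotent groups, and merely asserts that the quotient cannot be $S_5$, whereas you rule this out explicitly with the preimage of a point stabiliser $S_4$ --- a detail worth spelling out, as you do.
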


\begin{proof}
It is clear that every insoluble subgroup of $G$ is normal. Let $N$ be a minimal insoluble normal subgroup of $G$. According to Lemma~\ref{L-3.5}, $N$ is isomorphic to $A_5$ or $\SL(2,5)$.

First, suppose that $N\cong A_5$. If $\C_G(N)\neq 1$, as $N\cap\C_G(N)=1$, the group $G$ has a non-normal subgroup isomorphic to $A_4\times\C_G(N)$ which is not minimal non-nilpotent, a contradiction. Hence, $\C_G(N)=1$, and $N\sub G\sub S_5$ by $N/C$-theorem. Therefore $G=N\cong A_5$.

Now assume that $N\cong\SL(2,5)$.  If $Z=Z(N)\neq \C_G(N)$, then $G/Z$ has a subgroup isomorphic to $A_5\times \C_G(N)/Z$, which leads to a contradiction similar to the previous case. Therefore, $Z(G)=\C_G(N)=Z(N)$, for $|Z(N)|=2$. Using $N/C$-theorem, $G/Z$ is isomorphic to a subgroup of $\Aut(N)\cong S_5$. As $G/Z\not\cong S_5$ and has a subgroup isomorphic to $A_5$, thus $G/Z\cong A_5$. Hence $|G|=|N|$ and so $G\cong\SL(2,5)$. 
\end{proof}

\begin{corollary}\label{C-3.7}
Let $G$ be a finite insoluble group. If $G$ is a biminimal non-nilpotent group, then $G$ is isomorphic to $\SL(2,5)$ or $A_{5}$.
\end{corollary}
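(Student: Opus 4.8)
The plan is to reduce this statement directly to Theorem~\ref{T-3.6} by showing that every biminimal non-nilpotent group is para-$\mathfrak{Nil}$-Hamiltonian, after which the classification already obtained does all of the remaining work.

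First I would place the two definitions side by side. By hypothesis $G$ is biminimal non-nilpotent, so $G$ is not nilpotent and every \emph{proper} subgroup of $G$ is either nilpotent or minimal non-nilpotent. On the other hand, to call $G$ para-$\mathfrak{Nil}$-Hamiltonian one needs $G$ to be non-nilpotent and every \emph{non-normal} subgroup to be nilpotent or minimal non-nilpotent. The only gap between the two formulations is the difference between "proper" and "non-normal".

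The key step is the trivial but decisive remark that a non-normal subgroup is automatically proper, since $G\nor G$. Thus any non-normal subgroup $H$ of $G$ is proper, and the biminimal hypothesis forces $H$ to be nilpotent or minimal non-nilpotent; combined with the fact that $G$ itself is non-nilpotent (again part of the biminimal definition), this shows at once that $G$ is para-$\mathfrak{Nil}$-Hamiltonian.

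Since $G$ is finite and insoluble by assumption, Theorem~\ref{T-3.6} then applies verbatim and yields $G\cong A_5$ or $G\cong\SL(2,5)$, as required. I do not anticipate any genuine obstacle here: all of the substantive group-theoretic analysis is contained in Lemmas~\ref{L-3.4} and~\ref{L-3.5} and in Theorem~\ref{T-3.6}, and this corollary merely records the (purely formal) inclusion of the class of biminimal non-nilpotent groups in the class of para-$\mathfrak{Nil}$-Hamiltonian groups, followed by a direct appeal to that theorem.
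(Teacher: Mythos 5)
Your proposal is correct and is exactly the paper's (implicit) argument: Corollary~\ref{C-3.7} is stated without proof immediately after Theorem~\ref{T-3.6}, the intended justification being precisely your observation that a non-normal subgroup is proper, so every biminimal non-nilpotent group is para-$\mathfrak{Nil}$-Hamiltonian and the classification of Theorem~\ref{T-3.6} applies verbatim. No gap here; your write-up simply makes explicit what the paper leaves to the reader.
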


Since $Q_8\sub\SL(2,3)\nno\SL(2,5)$ and $\SL(2,3)$  is not minimal non-abelian, therefore Lemma 1 and Theorem 2 of~\cite{AtGio} can be improved as follows.

\begin{corollary}\label{C-3.8}
Let $G$ be a finite insoluble biminimal non-abelian group. Then, $G\cong A_5$.
\end{corollary}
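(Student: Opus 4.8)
The plan is to show that, under the finiteness and insolubility hypotheses, every biminimal non-abelian group is already biminimal non-nilpotent, so that Corollary~\ref{C-3.7} applies verbatim and yields $G\cong A_5$ or $G\cong\SL(2,5)$; the remaining task is then only to exclude $\SL(2,5)$. The conceptual point driving the reduction is that abelian groups are nilpotent and that a minimal non-abelian group has all of its proper subgroups abelian, hence nilpotent.

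First I would check the proper-subgroup condition. Let $H$ be a proper subgroup of $G$. By hypothesis $H$ is either abelian or minimal non-abelian. If $H$ is abelian then it is nilpotent. If $H$ is minimal non-abelian, then every proper subgroup of $H$ is abelian and therefore nilpotent; consequently either $H$ is itself nilpotent (as happens, for instance, when $H\cong Q_8$) or $H$ is non-nilpotent while all of its proper subgroups are nilpotent, i.e. $H$ is minimal non-nilpotent. In every case $H$ is nilpotent or minimal non-nilpotent, which is exactly the proper-subgroup requirement for $G$ to be biminimal non-nilpotent.

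Next I would treat $G$ itself. Since $G$ is insoluble it is not nilpotent, and it cannot be minimal non-nilpotent either, because by Schmidt's theorem every finite minimal non-nilpotent group is soluble. Hence $G$ is neither nilpotent nor minimal non-nilpotent, and together with the previous paragraph this shows that $G$ is a finite insoluble biminimal non-nilpotent group. Corollary~\ref{C-3.7} then forces $G\cong A_5$ or $G\cong\SL(2,5)$. To finish, I would rule out $\SL(2,5)$ exactly as the preceding remark indicates: $\SL(2,5)$ contains a copy of $\SL(2,3)$, which in turn contains the non-abelian subgroup $Q_8$, so $\SL(2,3)\sub\SL(2,5)$ is a proper subgroup that is neither abelian nor minimal non-abelian. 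Thus $\SL(2,5)$ is \emph{not} biminimal non-abelian, and we conclude $G\cong A_5$.

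The main obstacle here is not computational but bookkeeping about the two distinct minimality notions: one has to make sure the \emph{nilpotent} minimal non-abelian groups (such as $Q_8$) are absorbed harmlessly into the ``nilpotent'' alternative rather than being mistaken for minimal non-nilpotent subgroups, and one must invoke the classical solubility of finite minimal non-nilpotent groups to discard the possibility that $G$ is minimal non-nilpotent. Once these points are handled, the result is an immediate consequence of the already-established Corollary~\ref{C-3.7} and the exclusion of $\SL(2,5)$.
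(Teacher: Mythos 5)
Your proof is correct and takes essentially the same route as the paper: the paper derives the dichotomy $G\cong A_5$ or $G\cong\SL(2,5)$ from Corollary~\ref{C-3.7} (itself a consequence of Theorem~\ref{T-3.6}) and excludes $\SL(2,5)$ by exactly the observation stated immediately before the corollary, namely that $Q_8\sub\SL(2,3)\nno\SL(2,5)$ and $\SL(2,3)$ is not minimal non-abelian. Your only addition is to spell out the reduction from biminimal non-abelian to biminimal non-nilpotent (absorbing nilpotent minimal non-abelian subgroups such as $Q_8$ into the nilpotent alternative, and invoking Schmidt's theorem to see that the insoluble group $G$ itself is not minimal non-nilpotent), bookkeeping the paper leaves implicit and which you handle correctly.
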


\begin{corollary}\label{C-3.9}
Let $G$ be a locally graded insoluble parahamiltonian group. Then, $G\cong A_5$.
\end{corollary}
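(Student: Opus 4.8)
The plan is to sharpen the theorem of Atlihan and de Giovanni recalled in the introduction, which already guarantees that a locally graded insoluble parahamiltonian group is finite with $\pi(G)=\{2,3,5\}$, into the exact conclusion $G\cong A_5$. So I may assume throughout that $G$ is a finite insoluble parahamiltonian group; in particular $G$ is non-nilpotent. The key idea is to show that such a $G$ is automatically para-$\mathfrak{Nil}$-Hamiltonian, so that Theorem~\ref{T-3.6} applies, and then to discard the option $G\cong\SL(2,5)$.

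First I would record the elementary fact that every minimal non-abelian group is nilpotent or minimal non-nilpotent: its proper subgroups are all abelian, hence nilpotent, so the group is either nilpotent itself or is non-nilpotent with every proper subgroup nilpotent, i.e.\ minimal non-nilpotent. Now take any non-normal subgroup $H$ of $G$. As $G$ is parahamiltonian, $H$ is abelian or minimal non-abelian; an abelian group is nilpotent, and a minimal non-abelian group is nilpotent or minimal non-nilpotent by the fact just noted. Hence every non-normal subgroup of $G$ is nilpotent or minimal non-nilpotent, and since $G$ is non-nilpotent it is a finite insoluble para-$\mathfrak{Nil}$-Hamiltonian group. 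Theorem~\ref{T-3.6} then gives $G\cong A_5$ or $G\cong\SL(2,5)$.

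Finally I would eliminate $\SL(2,5)$ exactly as in the remark preceding Corollary~\ref{C-3.8}: one has $Q_8\sub\SL(2,3)\nno\SL(2,5)$, and $\SL(2,3)$ is non-abelian and not minimal non-abelian, because it already contains the non-abelian subgroup $Q_8$. Thus $\SL(2,5)$ has a non-normal subgroup that is neither abelian nor minimal non-abelian, so it cannot be parahamiltonian, leaving $G\cong A_5$. The substantive work here is all imported: the finiteness and the restriction $\pi(G)=\{2,3,5\}$ come from \cite{AtGio}, and the reduction to two candidates comes from Theorem~\ref{T-3.6}. I do not anticipate a genuine obstacle; the only point needing a moment's care is the passage from the parahamiltonian hypothesis to the para-$\mathfrak{Nil}$-Hamiltonian one, which rests on the observation that minimal non-abelian groups lie in $\mathfrak{Nil}$ or are minimal non-nilpotent.
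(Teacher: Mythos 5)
Your proof is correct and takes essentially the same route as the paper: finiteness from \cite[Theorem 2]{AtGio}, reduction to $A_5$ or $\SL(2,5)$ via Theorem~\ref{T-3.6}, and exclusion of $\SL(2,5)$ through the non-normal subgroup $\SL(2,3)$, which is exactly the remark preceding Corollary~\ref{C-3.8}. Your only additions are to make explicit two steps the paper leaves implicit, namely that a parahamiltonian group is para-$\mathfrak{Nil}$-Hamiltonian (since minimal non-abelian groups are nilpotent or minimal non-nilpotent) and the $\SL(2,5)$ elimination, both of which are verified correctly.
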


\begin{proof}
According to \cite[Theorem 2]{AtGio}, $G$ is a finite insoluble group. Now, the statement holds by Theorem~\ref{T-3.6}.
\end{proof}

A subgroup $H$ of a group $G$ is called permutable (or quasinormal) if for all subgroups $K$ of $G$, $HK=KH$.
Obviously any normal subgroup is permutable. By \cite[Lemma 2.3]{ADE1}, if all non-nilpotent subgroups of finite group $G$ are permutable, then $G$ is soluble. So we can conclude that  finite meta-$\mathfrak{Nil}$-Hamiltonian groups are soluble.

\begin{theorem}\label{T-3.10}
Let $G$ be a finite meta-$\mathfrak{Nil}$-Hamiltonian group. Then $G$ is soluble.
\end{theorem}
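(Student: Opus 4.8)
The plan is to reduce the assertion directly to the permutability criterion recalled immediately before the statement. By the definition of a meta-$\mathfrak{Nil}$-Hamiltonian group, every non-nilpotent subgroup of $G$ is normal in $G$; and since a normal subgroup $H\nor G$ automatically satisfies $HK=KH$ for every $K\sub G$, such a subgroup is permutable. Thus every non-nilpotent subgroup of $G$ is permutable, and \cite[Lemma 2.3]{ADE1} applies word for word to give that $G$ is soluble. This is essentially a one-line deduction once the definitions are unwound.

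If one prefers an argument that does not treat that lemma as a black box, I would proceed by a minimal counterexample. First note that the property is subgroup closed: if $K\sub H\sub G$ and $K$ is non-nilpotent, then $K\nor G$, whence $K\nor H$, so $H$ is again meta-$\mathfrak{Nil}$-Hamiltonian. Consequently, in an insoluble meta-$\mathfrak{Nil}$-Hamiltonian group of least order every proper subgroup is soluble, so $G$ is minimal insoluble and $S:=G/\Phi(G)$ is a non-abelian simple group (as in the proof of Lemma~\ref{L-3.5}).

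Now I would produce the contradiction by exhibiting a forbidden subgroup. Since $S$ is non-abelian simple it cannot be minimal non-nilpotent, because every finite minimal non-nilpotent group is soluble by Schmidt's theorem; hence $S$ has a proper non-nilpotent subgroup $T$. Its full preimage $\tilde T$ in $G$ contains $\Phi(G)$, is proper in $G$, and is non-nilpotent (a nilpotent group has only nilpotent quotients). Therefore $\tilde T\nor G$, and passing to the quotient gives $T=\tilde T/\Phi(G)\nor S$ with $T$ a proper nontrivial subgroup of the simple group $S$ --- a contradiction.

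I expect no real obstacle in either route. In the first, all the substance sits inside the quoted implication ``all non-nilpotent subgroups permutable $\Rightarrow$ soluble'' of \cite{ADE1}; in the second, the only nontrivial ingredients are the standard structure of minimal insoluble groups (already invoked for Lemma~\ref{L-3.5}) together with Schmidt's theorem, while subgroup closure and the lifting of a non-nilpotent subgroup through $\Phi(G)$ are entirely routine.
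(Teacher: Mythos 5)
Your proposal is correct, and your primary route is exactly the paper's: the authors make the same one-line observation immediately before the statement, namely that normal subgroups are permutable, so \cite[Lemma 2.3]{ADE1} applies verbatim. Your self-contained backup argument, however, genuinely differs from the paper's own ``straightforward proof''. The paper avoids any minimal-counterexample reduction: it notes that if $G$ has no proper non-nilpotent subgroup then $G$ is nilpotent or minimal non-nilpotent, hence soluble by Schmidt's theorem; otherwise, for every non-nilpotent $X\sub G$ the quotient $G/X$ is Dedekind, so $\gamma_3(G)\sub X$, whence $\gamma_3(G)$ lies in the intersection of all non-nilpotent subgroups and is therefore itself nilpotent or minimal non-nilpotent, hence soluble, and $G/\gamma_3(G)$ is nilpotent of class at most $2$. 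Your argument instead passes through subgroup closure of the meta-$\mathfrak{Nil}$-Hamiltonian property, the structure of a minimal insoluble group ($S=G/\Phi(G)$ non-abelian simple, as invoked in Lemma~\ref{L-3.5}), and the lifting of a proper non-nilpotent subgroup of $S$ to a proper non-nilpotent (hence normal) subgroup of $G$ containing $\Phi(G)$, contradicting simplicity; all steps check out, including the verification that $T$ exists (Schmidt again) and that $\tilde T\nor G$ descends to $T\nor S$. What each buys: the paper's route is more elementary and local --- it needs only Schmidt's theorem plus the fact that Dedekind groups are nilpotent of class at most $2$, and it produces the stronger structural byproduct that $\gamma_3(G)$ is soluble and contained in every non-nilpotent subgroup (a fact reused later, e.g.\ in Lemma~\ref{L-5.4}); your route needs the Frattini-quotient-is-simple fact for minimal insoluble groups, but it is a clean contradiction argument that generalizes naturally to other subgroup-closed classes in place of $\mathfrak{Nil}$, so long as the minimal non-$\mathfrak{X}$ groups in question are soluble.
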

Although the proof for the above theorem is not necessary, we can provide a straightforward proof.

Since every finite minimal non-nilpotent group is soluble, it can be assumed that $G$ has a proper non-nilpotent subgroup. Suppose that $X\sub G$ is non-nilpotent. So $G/X$ is a finite Dedekind group and $\gamma_3(G)\sub X$. Therefore, $\gamma_3(G)$ is contained in the intersection of all non-nilpotent subgroups of $G$. Since $\gamma_3(G)$ is nilpotent or minimal non-nilpotent, then it is soluble, which will cause $G$ to be soluble.

\section{Para-$\mathfrak{Nil}$-Hamiltonian and biminimal non-nilpotent groups}

Let $G$ be a group and \[G=\gamma_1(G)\geq \gamma_2(G)\geq\cdots\geq\gamma_n(G)\geq\cdots,\] be its lower central series.The intersection $\bigcap_{i\geq 1}\gamma_i(G)$ is denoted by $\gamma_{\infty}(G)$.

Minimal non-nilpotent groups and especially groups which are not nilpotent of class $k$ but whose proper subgroups are nilpotent of class $k$, were studied by Newman and Wiegold~\cite{NeWi}. The following two important theorems are about finitely generated non-nilpotent groups.

\begin{theorem}\cite[Lemma 3.2]{NeWi}\label{NW1}
Let G be a finitely generated non-nilpotent group all of whose proper subgroups are locally nilpotent. If $G/\gamma_{\infty}(G)$ is non-trivial, then $G$ is finite.
\end{theorem}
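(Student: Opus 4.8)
The plan is to translate the condition on $\gamma_{\infty}$ into the existence of a normal subgroup of prime index, and then to prove that this subgroup is finite. Since $\gamma_{\infty}(G)\sub\gamma_2(G)=G'$, the quotient $G/\gamma_{\infty}(G)$ is non-trivial exactly when $G'\neq G$, that is, when the finitely generated abelian group $G/G'$ is non-trivial. Hence $G$ admits an epimorphism onto $\Z_p$ for some prime $p$, and I would take $M$ to be its kernel, so that $M\nor G$ with $G/M\cong\Z_p$. Having finite index in the finitely generated group $G$, the subgroup $M$ is itself finitely generated; being proper it is locally nilpotent by hypothesis, and a finitely generated locally nilpotent group is nilpotent. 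Thus $M$ is a finitely generated nilpotent group, and since $G=M\cyc{x}$ with $x^{p}\in M$ we have $|G|=p|M|$, so it suffices to prove that $M$ is finite. Moreover, for a finitely generated nilpotent group finiteness is equivalent to finiteness of the abelianization, so the whole problem becomes: show that $A:=M/M'$ is finite.

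First I would set up the relevant action. As $M'\nor G$, conjugation by $x$ induces an automorphism $\sigma$ of $A$ which, because inner automorphisms of $M$ act trivially on $A$ and $x^{p}\in M$, satisfies $\sigma^{p}=1$. Suppose, for a contradiction, that $A$ is infinite; then $V:=A\otimes\mathbb{Q}$ is a non-zero finite-dimensional $\mathbb{Q}$-space and $\sigma$ acts on $V$ as a semisimple operator whose eigenvalues are $p$-th roots of unity. The argument now splits according to whether $\sigma$ acts trivially on $V$ or not.

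Suppose $\sigma$ is non-trivial on $V$, i.e. it has an eigenvalue different from $1$. Using that $M$ (hence $A$) is residually finite, I would choose a proper, $\sigma$-invariant subgroup $\bar B\sub M$ of finite index containing $M'$ and $x^{p}$; such subgroups are plentiful, since finitely many $\sigma$-translates of any finite-index subgroup can be intersected. Then $H:=\cyc{x,\bar B}=\bar B\cyc{x}$ is a proper subgroup of $G$, its index being $[M:\bar B]>1$, and so $H$ is nilpotent. But in a nilpotent group the conjugation action on the rationalised abelianization of any normal subgroup is unipotent, whereas $\sigma$ acts on $\bar B/\bar B'\otimes\mathbb{Q}=V$ with an eigenvalue $\neq1$; this contradiction disposes of the case.

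The hard part will be the case in which $\sigma$ acts trivially on $V$. Here $\sigma$ is unipotent of finite order, hence trivial on $V$, and a Mal'cev/Lie-algebra argument shows that the automorphism induced on the torsion-free quotient $M/T$, where $T=T(M)$ is the finite torsion subgroup, is trivial; consequently $[M,x]\sub T$ and $G/T$ is nilpotent. If $x$ acts on $T$ in a unipotent fashion, I would deduce that $G$ itself is nilpotent, contradicting the hypothesis. Otherwise $\gamma_{\infty}(G)$ is a non-trivial finite subgroup $D\sub T$ with $[D,G]=D$; since $M$ is nilpotent one has $[D,M]\lneq D$, so on $D/[D,M]$ the element $x$ acts fixed-point-freely, producing a non-nilpotent Frobenius-type section. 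The remaining task is to convert this section into a genuine proper non-nilpotent subgroup of $G$, exploiting the infiniteness of $M$ to cut $M$ down to a proper finite-index $\sigma$-invariant subgroup that still carries the offending action; this gives the final contradiction, forcing $A$, and therefore $M$ and $G$, to be finite. Controlling this last case — making sure the subgroup one builds really is both non-nilpotent and proper while the non-nilpotency is concentrated in the finite torsion part — is the main obstacle; the rest is the routine reduction described above.
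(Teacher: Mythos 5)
The paper does not prove this statement at all; it is quoted verbatim from Newman--Wiegold \cite[Lemma 3.2]{NeWi}, so there is no internal proof to match yours against. Judged on its own merits, your attempt is genuinely incomplete: by your own admission the final case is left open (``the remaining task is to convert this section into a genuine proper non-nilpotent subgroup \dots\ is the main obstacle''), and that task is precisely where the content of the lemma lies. Your opening reduction is sound: $G/\gamma_{\infty}(G)\neq 1$ iff $G\neq G'$, yielding $M\nor G$ of index $p$ with $M$ finitely generated nilpotent, and finiteness of $G$ equivalent to finiteness of $A=M/M'$. But in the terminal situation ($D=\gamma_{\infty}(G)$ finite, $[D,G]=D\neq 1$, $G/T$ nilpotent) the natural candidate $D\cyc{x}$ for a proper non-nilpotent subgroup can fail to be proper --- for instance when $M$ has Hirsch length $1$, the image of $x^pM'$ can have finite index in $A$ --- and ``cutting $M$ down'' to a finite-index invariant subgroup does not obviously preserve the non-nilpotency, which is concentrated in the finite torsion part. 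Nothing in the sketch forces the contradiction, so finiteness of $A$ is not established.

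Two intermediate claims also need repair. First, in the unipotent case the assertion ``the automorphism induced on $M/T$ is trivial; consequently $[M,x]\sub T$'' is false as stated: conjugation by $x$ on $M/T$ has finite order only modulo inner automorphisms (its $p$-th power is conjugation by $x^p$, generally nontrivial), and unipotent automorphisms of infinite order --- inner ones, for example --- act trivially on $V$ without being trivial on $M/T$. The Mal'cev argument yields only that the action of $x$ is inner over the rational completion of $M/T$; from that one can still deduce the fact you actually use, namely that $G/T$ is nilpotent, but by an argument you have not supplied. Second, in the non-unipotent case, ``such subgroups are plentiful'' does not justify a \emph{proper} finite-index $\sigma$-invariant $\bar B$ containing $x^pM'$: one must exclude that the image $a_0$ of $x^p$ generates $A$. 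This can be done --- $x$ centralizes $x^p$, so $\sigma(a_0)=a_0$, and $\cyc{a_0}=A$ would force $\sigma=1$ on $A$, against the case hypothesis, after which a suitable preimage of $n\bigl(A/\cyc{a_0}\bigr)$ works --- but the check is missing from your writeup. Similarly, ``$x$ acts fixed-point-freely on $D/[D,M]$'' does not follow from $[D/[D,M],x]=D/[D,M]$ without a coprimality hypothesis (the order-$p$ action on the $p$-part of that section is never fixed-point-free). In sum: the reduction skeleton is correct, but the proof fails exactly at its hardest step and two supporting claims are unproved or wrong as stated.
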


\begin{theorem}\cite[Theorem 3.3]{NeWi}\label{NW2}
 The Frattini factor group of an infinite finitely generated minimal non-nilpotent group is non-abelian and simple.
\end{theorem}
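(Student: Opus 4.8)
The plan is to extract two structural facts about $G$ and then combine them. First I would establish that $G$ is perfect. Since $G$ is minimal non-nilpotent, every proper subgroup of $G$ is nilpotent, hence locally nilpotent, so the hypotheses of Theorem~\ref{NW1} are met by the finitely generated non-nilpotent group $G$. Because $G$ is infinite, the conclusion of Theorem~\ref{NW1} forces $G/\gamma_\infty(G)$ to be trivial; that is, $\gamma_\infty(G)=G$. As $\gamma_2(G)\supseteq\gamma_\infty(G)=G$, this yields $\gamma_2(G)=G'=G$, so $G$ is perfect. I would also record the elementary fact that, $G$ being a nontrivial finitely generated group, $\Phi(G)\neq G$, so the Frattini factor $\bar G:=G/\Phi(G)$ is a nontrivial perfect group.

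The heart of the argument is the claim that every proper normal subgroup of $G$ lies inside $\Phi(G)$. To prove it I would argue by contradiction: suppose $N\nor G$ with $N\neq G$ but $N\nsubseteq\Phi(G)$. Then some maximal subgroup $M$ of $G$ fails to contain $N$, and since $N$ is normal the set $NM$ is a subgroup, so maximality forces $NM=G$; note that $M$ is nilpotent, being a proper subgroup. Consequently $G/N=NM/N\cong M/(M\cap N)$ is a quotient of the nilpotent group $M$ and hence is nilpotent. On the other hand $G/N$ is perfect, being a quotient of the perfect group $G$. A perfect nilpotent group is trivial, so $G=N$, contradicting $N\neq G$. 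This establishes the claim.

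Finally I would deduce simplicity and non-commutativity of $\bar G$. Any normal subgroup of $\bar G$ has the form $N/\Phi(G)$ with $N\nor G$ and $\Phi(G)\sub N$; if $N/\Phi(G)\neq\bar G$ then $N\neq G$, so $N$ is a proper normal subgroup of $G$ and by the claim $N\sub\Phi(G)$, whence $N=\Phi(G)$ and $N/\Phi(G)=1$. Thus $\bar G$ has no proper nontrivial normal subgroup, and since $\bar G$ is nontrivial it is simple. Moreover $\bar G$ is perfect and nontrivial, and a nontrivial abelian group is never perfect, so $\bar G$ is non-abelian, completing the proof.

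The step I expect to be the crucial one — the hinge on which everything turns — is the reduction to perfectness in the first paragraph: it is precisely the infiniteness hypothesis, funnelled through Theorem~\ref{NW1}, that makes $G$ perfect, and without it the central claim is simply false (for instance $S_3$ is a finite minimal non-nilpotent group whose proper normal subgroup $A_3$ is not contained in $\Phi(S_3)=1$). Once perfectness is secured, the remaining implications are purely formal.
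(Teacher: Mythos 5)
The paper offers no proof of this statement to compare against: it is quoted verbatim from Newman--Wiegold \cite[Theorem 3.3]{NeWi}, so the only benchmark is the cited source, whose argument likewise rests on their Lemma 3.2 (quoted in this paper as Theorem~\ref{NW1}). Judged on its own, your proof is correct and is essentially the expected derivation. The reduction to perfectness is sound: every proper subgroup of a minimal non-nilpotent group is nilpotent, hence locally nilpotent, so Theorem~\ref{NW1} applies, and infiniteness forces $\gamma_{\infty}(G)=G$, whence $\gamma_2(G)=G$. The central claim is also handled correctly: if $N\nor G$ is proper with $N\nsub\Phi(G)$, pick a maximal $M$ with $N\nsub M$, get $G=NM$, so $G/N\cong M/(M\cap N)$ is nilpotent; being also perfect, it is trivial --- a contradiction --- and simplicity plus non-commutativity of $G/\Phi(G)$ follow formally from perfectness. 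One small point worth making explicit: both the step $\Phi(G)\neq G$ and the choice of $M$ rely on the fact that in a finitely generated group every proper subgroup is contained in a maximal subgroup (the union of a chain of proper subgroups is proper, since the finitely many generators would have to appear in a single member of the chain); you use this silently, and it is exactly where finite generation enters beyond its role in Theorem~\ref{NW1}. Your closing observation that infiniteness is the hinge is apt, and the $S_3$ counterexample correctly shows the claim fails without it.
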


If the finitely generated minimal non-nilpotent group $G$ has a subgroup of finite index, then it has a normal subgroup $N$ of finite index. Since  $G/N$ is finite and soluble, therefore $\gamma_2(G)$ will be a proper subgroup of $G$ and then $G$ will be finite by Theorem~\ref{NW1}. Then, the infinite  finitely generated minimal non-nilpotent group has no subgroups of finite index, therefore, it is not locally graded. Thus, we will have the following result.

\begin{corollary}\label{C-4.3}
If a finitely generated minimal non-nilpotent group has a subgroup of finite index, then it is finite. In particular, every locally graded finitely generated minimal non-nilpotent group is finite.
\end{corollary}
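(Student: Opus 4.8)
The plan is to reduce the statement to an application of Theorem~\ref{NW1}. A minimal non-nilpotent group $G$ is by definition non-nilpotent with all proper subgroups nilpotent, hence locally nilpotent, so the hypotheses of Theorem~\ref{NW1} already hold except possibly for the requirement that $G/\gamma_{\infty}(G)$ be non-trivial. Since $\gamma_{\infty}(G)\sub\gamma_2(G)=G'$, making this factor non-trivial amounts to showing that $G$ is not perfect. First I would replace the given proper subgroup $H$ of finite index by a normal one: letting $G$ act on the finitely many cosets of $H$ yields a homomorphism into a finite symmetric group whose kernel $N=\Core_G(H)$ is a proper normal subgroup of finite index. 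Note that finite generation is not needed for this reduction, since $[G:\Core_G(H)]$ already divides $[G:H]!$.

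Next I would examine the finite, non-trivial quotient $G/N$. By the correspondence theorem each of its proper subgroups is of the form $M/N$ with $N\sub M\subsetneq G$; as $M$ is then a proper subgroup of $G$, it is nilpotent, and therefore so is $M/N$. Thus $G/N$ is a finite group all of whose proper subgroups are nilpotent, so it is either nilpotent or minimal non-nilpotent, and in either case soluble --- in the second case by the fact, recalled before Theorem~\ref{T-3.10}, that finite minimal non-nilpotent groups are soluble. A non-trivial soluble group is not perfect, so $G'N/N=(G/N)'\neq G/N$ and hence $G'\neq G$. Then $\gamma_{\infty}(G)\sub G'\subsetneq G$, so $G/\gamma_{\infty}(G)\neq 1$, and Theorem~\ref{NW1} forces $G$ to be finite.

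For the final assertion, a locally graded finitely generated minimal non-nilpotent group $G$ is non-trivial and finitely generated, so by definition it contains a proper subgroup of finite index, and the first part applies. The only non-elementary ingredient --- and thus the main obstacle --- is the solubility of $G/N$ in the minimal non-nilpotent case; the rest is bookkeeping with the core and the lower central series. As a byproduct the contrapositive is worth recording: an infinite finitely generated minimal non-nilpotent group is perfect and admits no proper subgroup of finite index, so it is never locally graded, in harmony with the simplicity of its Frattini quotient in Theorem~\ref{NW2}.
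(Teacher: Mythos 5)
Your proposal is correct and follows essentially the same route as the paper: pass to the core $N=\Core_G(H)$ to get a proper normal subgroup of finite index, observe that $G/N$ is finite with all proper subgroups nilpotent and hence soluble (Schmidt), conclude $G'\neq G$ so that $G/\gamma_{\infty}(G)$ is non-trivial, and invoke Theorem~\ref{NW1}. You merely make explicit two steps the paper leaves implicit --- the solubility of $G/N$ via Schmidt's theorem and the passage from a soluble quotient to $\gamma_2(G)\neq G$ --- and your closing remark on the locally graded case matches the paper's contrapositive argument exactly.
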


Smith ~\cite[Theorem 1]{Smith} showed that {\it if every proper subgroup of a torsion free locally nilpotent group is nilpotent, then the group must be finite}. Also he proved that, {\it if $G$ is a soluble minimal non-nilpotent group that has no maximal subgroup, then $G$ has no proper subgroup of finite index} ~\cite[Theorem 3.1-v]{Smith}.

\begin{lemma}\label{L-4.4}
Let $G$ be a finitely generated locally graded group. If $G$ is infinite, then the finite residual $J$ of $G$ has infinite index. 
\end{lemma}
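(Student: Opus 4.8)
The plan is to argue by contradiction: I would assume that the finite residual $J$ has \emph{finite} index in $G$ and derive a contradiction directly from the definition of $J$ as the intersection of all finite-index subgroups of $G$ (equivalently, of all finite-index normal subgroups). The guiding observation is that $J$ is by construction contained in every subgroup of finite index, so to reach a contradiction it suffices to exhibit a finite-index subgroup of $G$ that is \emph{properly} contained in $J$.

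First I would record that, under the assumption $[G:J]<\infty$, the subgroup $J$ inherits two features from $G$. Since $G$ is finitely generated and $J$ has finite index, $J$ is itself finitely generated (a finite-index subgroup of a finitely generated group is finitely generated, by the Reidemeister--Schreier argument). Since $G$ is infinite while $[G:J]$ is finite, $J$ is infinite, and in particular nontrivial. At this point local gradedness applies to $J$: being a nontrivial finitely generated subgroup of the locally graded group $G$, it contains a proper subgroup $K$ of finite index, so $K<J$ with $[J:K]<\infty$. Then $[G:K]=[G:J]\,[J:K]$ is finite, whence $K$ is a finite-index subgroup of $G$. By the defining property of the finite residual we get $J\sub K$, which contradicts $K<J$.

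The argument is short, and the only ingredients are the transitivity of finite index, the fact that finite-index subgroups of finitely generated groups remain finitely generated, and the inclusion $J\sub H$ for every finite-index subgroup $H$. I do not expect a serious obstacle; the one point to handle with care is the precise definition of the finite residual, namely confirming that $J$ lies inside every finite-index subgroup and not merely every finite-index \emph{normal} subgroup. This is immediate, since any finite-index subgroup contains its normal core, which is again of finite index and hence contains $J$.
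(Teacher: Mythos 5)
Your proposal is correct and follows essentially the same route as the paper's own proof: assume $[G:J]<\infty$, note $J$ is finitely generated and nontrivial, use local gradedness to produce a proper finite-index subgroup $K<J$, and derive the contradiction $J\sub\Core_G(K)\sub K$. Your explicit verification that $J$ lies in every finite-index subgroup (via the normal core) is exactly the step the paper makes with $\Core_G(K)$, so there is nothing to add.
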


\begin{proof}
Assume that $J$ has finite index in $G$. Then $J$ is finitely generated and hence it contains a proper subgroup $K$ such that $|G: K|$ is finite. Therefore, $|G:\Core_{G}(K)|$ is also finite and so $ J \sub\Core_{G}(K)\sub K $ , which is a contradiction.
\end{proof} 

The following lemma is a generalization of~\cite[Lemma 4]{AtGio} by combining the parts of the proofs of~\cite[Lemma 4, Theorem 2]{AtGio}.

\begin{lemma}\label{closer}
Let $\mathfrak{X}$ be a class of groups that is subgroup closed and let $G$ be a para-$\mathfrak{X}$-Hamiltonian group. If $X\sub G$ which neither is normal nor is an $\mathfrak{X}$-group, then $X$ is maximal in its normal closure $X^G$. Furthermore, if $X$ is finite and $G$ is locally graded, then

\begin{itemize}
\item[(i)] $X^G$ is finite;
\item[(ii)] either $X\nor X^G$ or $G$ is finite.
\end{itemize}
\end{lemma}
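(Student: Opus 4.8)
The first task is to pin down the structure of $X$: since $\mathfrak{X}$ is subgroup closed and $X$ is neither normal nor an $\mathfrak{X}$-group, the para-$\mathfrak{X}$-Hamiltonian hypothesis forces $X$ to be minimal non-$\mathfrak{X}$. For the maximality claim, suppose $X < Y \sub X^G$. As $X$ is non-$\mathfrak{X}$, so is $Y$, hence $Y$ is either normal in $G$ or minimal non-$\mathfrak{X}$; in the former case $X \sub Y \nor G$ gives $X^G \sub Y$, whence $Y = X^G$, while in the latter case $X$ would be a proper, therefore $\mathfrak{X}$, subgroup of $Y$, a contradiction. Thus $X$ is maximal in $X^G$, and $X \neq X^G$ because $X$ is non-normal.

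Now assume $X$ is finite and $G$ locally graded. Since $X$ is non-normal there is $g \in G$ with $X^g \neq X$; the conjugate $X^g$ again satisfies the hypotheses, so by the first assertion it is maximal in $(X^g)^G = X^G$. Finiteness of $X$ forces $X^g \nsub X$, so $\cyc{X, X^g}$ properly contains $X$ and hence equals $X^G$ by maximality. Therefore $X^G$ is finitely generated, and being a subgroup of the locally graded group $G$ it is itself finitely generated and locally graded, so Lemma~\ref{L-4.4} is available.

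To prove (i) I argue by contradiction, assuming $X^G$ infinite. Let $J$ be its finite residual; by Lemma~\ref{L-4.4} it has infinite index. Since $J \nor X^G$ and $X$ is maximal, $XJ$ equals $X$ or $X^G$, and $XJ = X^G$ would give $X^G/J \cong X/(X \cap J)$ finite, contradicting the infinite index of $J$; hence $J \sub X$ and $J$ is finite. Passing to $\bar G := X^G/J$—infinite, residually finite, and still generated by conjugates of the finite maximal subgroup $\bar X := X/J$—I would run the standard argument for a finite maximal subgroup of a residually finite group: every proper finite-index normal $M \nor \bar G$ satisfies $\bar X M = \bar G$ (otherwise $M \sub \bar X$ is a finite subgroup of finite index in an infinite group), so $|\bar G : M| = |\bar X : \bar X \cap M|$, and since the family $\{\bar X \cap M\}$ is a set of subgroups of the finite group $\bar X$ closed under intersection with trivial total intersection, some $M_0$ satisfies $\bar X \cap M_0 = 1$. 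Then $\bar G = M_0 \rtimes \bar X$ with $M_0$ infinite. Maximality of $\bar X$ rules out any proper nontrivial $\bar X$-invariant subgroup of $M_0$; but $M_0$ is infinite, finitely generated and residually finite, so intersecting the finitely many $\bar X$-conjugates of a proper finite-index subgroup yields a proper, finite-index, $\bar X$-invariant subgroup of $M_0$, which must then be trivial—absurd, since it has finite index in an infinite group. This contradiction gives (i).

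For (ii), suppose $X \nno X^G$ and put $C = \C_G(X^G)$, a normal subgroup of $G$. As $X^G$ is finite by (i), $G/C$ embeds in $\Aut(X^G)$ and is finite, so it suffices to bound $C$. The subgroup $XC$ contains the non-$\mathfrak{X}$ group $X$, hence is non-$\mathfrak{X}$, and so is normal in $G$ or minimal non-$\mathfrak{X}$. If $XC \nor G$ then $X^G \sub XC$, and the Dedekind law gives $X^G = X(X^G \cap C) = X\,Z(X^G)$; since $Z(X^G)$ centralizes $X$ this would force $X \nor X^G$, contrary to assumption. Hence $XC$ is minimal non-$\mathfrak{X}$, and as $X$ is non-$\mathfrak{X}$ it cannot be a proper subgroup of $XC$, forcing $XC = X$, i.e. $C \sub X$ is finite and $G$ is finite. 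The heart of the argument, and the step I expect to be the main obstacle, is part (i): converting the soft information that $X^G$ is finitely generated and locally graded with a finite maximal subgroup into actual finiteness, via the reduction to the residually finite quotient $\bar G$ and the contradiction extracted from the normal complement $M_0$ having no proper nontrivial $\bar X$-invariant subgroup.
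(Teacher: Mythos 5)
Your proof is correct, and while the maximality argument coincides with the paper's (same dichotomy: a subgroup strictly above $X$ is non-$\mathfrak{X}$, cannot be minimal non-$\mathfrak{X}$ without making $X$ an $\mathfrak{X}$-group, hence is normal and contains $X^G$), both (i) and (ii) take genuinely different routes. For (i) the paper is much shorter: having noted that $X^G=\cyc{X,g}$ for any $g\in X^G\setminus X$ is finitely generated, it observes that every normal subgroup $Y$ of finite index in $X^G$ either lies in $X$ (whence $X^G$ is finite at once) or satisfies $X^G=XY$, so that $|X^G:Y|\leq |X|$; since a finitely generated group has only finitely many subgroups of index bounded by $|X|$, the finite residual of $X^G$ has finite index, and Lemma~\ref{L-4.4} then forces $X^G$ to be finite. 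Your route --- extracting $J\sub X$, passing to the residually finite quotient $\bar{G}=X^G/J$, splitting off a normal complement $M_0$ to the finite maximal subgroup $\bar{X}$, and contradicting maximality via an $\bar{X}$-invariant finite-index subgroup of $M_0$ --- in effect reproves from scratch that a finitely generated residually finite group with a finite maximal subgroup is finite; every step checks (including, implicitly, the degenerate case $X=J$, where $\bar{X}=1$ and the same contradiction applies), but the bounded-index observation renders the complement machinery unnecessary. For (ii) you diverge more substantially: the paper argues via the normalizer --- if $X<\N_G(X)$ then $\N_G(X)$, being non-$\mathfrak{X}$ and properly containing the non-$\mathfrak{X}$ subgroup $X$, cannot be minimal non-$\mathfrak{X}$, hence is normal in $G$, so $X^G\sub\N_G(X)$ and $X\nor X^G$; while if $X=\N_G(X)$, all conjugates of $X$ lie in the finite group $X^G$, so there are finitely many, $|G:\N_G(X)|$ is finite, and $G$ is finite. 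Your contrapositive via $C=\C_G(X^G)$, the embedding of $G/C$ into $\Aut(X^G)$, and the para-$\mathfrak{X}$ dichotomy applied to $XC$ (with the Dedekind-law computation $X^G=X\,Z(X^G)$ correctly ruling out $XC\nor G$ under the assumption $X\nno X^G$) is equally valid and isolates nicely why non-normality of $X$ in $X^G$ is so restrictive, but it invokes the $N/C$-theorem where the paper needs only a count of conjugates.
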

\begin{proof}
Let $X< Y\sub G$. If $Y\nno G$, then either $Y\in\mathfrak{X}$ or $Y$ is minimal non-$\mathfrak{X}$ which implies that $X\in\mathfrak{X}$, a contradiction. Therefore, $Y\nor G$ and hence $X^G\sub Y$. Then $X$ is maximal in $X^G$.

(i) Now suppose that $X$ is finite and that $G$ is locally graded. Then $X^G$ is finitely generated, for $X$ is maximal in $X^G$. Let $Y$ be any normal subgroup of finite index of $X^G$. If $Y\sub X$ then $X^G$ is finite. Otherwise $X^G=YX$ and hence $|X^G: Y|\leq |X|$. Therefore, the finite residual of $X^G$ has finite index, then $X^G$ is finite by Lemma ~\ref{L-4.4}.

(ii) Let $X$ be a proper subgroup of $\N_G(X)$. Then $\N_G(X)\nor G$. Therefore $X^G\sub\N_G(X)$  and so $X\nor X^G$. Suppose that $X=\N_G(X)$, as $X^G$ is finite, then $X$ has finitely many conjugates in $G$. Hence $|G:\N_G(X)|$ is finite, and so $G$ is finite.
\end{proof}

If a group $G$ satisfies one of the following conditions, then its commutator subgroup $G'$ is finite:
\begin{itemize}
\item[(i)] $|H^G : H|$ is finite, for every subgroup $H$ of $G$~\cite[Theorem 5.70]{DKS}.
\item[(ii)] $G$ is a locally graded and metahamiltonian~\cite[Theorem 2.1]{FGM}.
\item[(iii)] $G$ is an insoluble locally graded parahamiltonian~\cite[Theorem 2.1]{AtGio}.
\end{itemize}
In case (iii), it follows from Corollary~\ref{C-3.9} that $G$ is isomorphic to $A_5$.
In the following theorem, we extend the finiteness of $G'$ with an additional condition for para-$\mathfrak{Nil}$-Hamiltonian groups. 

\begin{theorem}\label{T-4.6}
Let $G$ be a para-$\mathfrak{Nil}$-Hamiltonian group. If $G$ has a finitely generated minimal non-nilpotent subgroup $X$ whose normal closure $X^G$ is locally graded, then $G'$ is finite. Furthermore, if  either $X\nor G$ or  $G$ is infinite, then $G$ is soluble.
\end{theorem}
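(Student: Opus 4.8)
The plan is to prove both assertions by reducing everything to the single fact that $X^G$ is \emph{finite}, so I begin there. Since $X\sub X^G$ and local gradedness is inherited by subgroups, $X$ is a finitely generated locally graded minimal non-nilpotent group, hence finite by Corollary~\ref{C-4.3}. If $X\nor G$ then $X^G=X$ is finite. Otherwise $X$ is a non-normal non-$\mathfrak{Nil}$ subgroup, so by Lemma~\ref{closer} it is maximal in $X^G$; consequently $X^G=\cyc{X,X^g}$ is finitely generated, and the proof of Lemma~\ref{closer}(i) (which uses only that $X^G$ is finitely generated and locally graded) yields that $X^G$ is finite.

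For the finiteness of $G'$ I would verify the criterion of \cite[Theorem 5.70]{DKS}, namely that $|H^G:H|<\infty$ for every $H\sub G$. The crucial observation is uniform in $H$: put $N:=HX^G$, which is a subgroup because $X^G\nor G$, and which is non-$\mathfrak{Nil}$ because it contains the non-nilpotent $X$. If $N$ were non-normal in $G$, then being para-$\mathfrak{Nil}$-Hamiltonian $G$ would force $N$ to be minimal non-nilpotent; but $X^G\sub N$ is itself non-$\mathfrak{Nil}$, so $X^G$ cannot be a proper subgroup of $N$, forcing $X^G=N$ and hence $H\sub X^G$. Thus in every case $H^G\sub HX^G$, whence
\[
|H^G:H|\leq |HX^G:H|=|X^G:X^G\cap H|\leq |X^G|<\infty .
\]
Therefore $G'$ is finite by \cite[Theorem 5.70]{DKS}.

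Finally, for solubility I would argue by contradiction, assuming $G$ insoluble. As $G'$ is finite, insolubility of $G$ forces $G'$ to be finite and insoluble. Suppose first that $G$ is infinite; then $G/G'$ is infinite, so choosing $g$ with $gG'$ of infinite order and replacing $g$ by a suitable power (permissible since $G/\C_G(G')$ is finite) produces an element of infinite order in $\C_G(G')$, giving an infinite cyclic $D\sub \C_G(G')$ with $D\cap G'=1$. Because $G'$ is insoluble it is not meta-$\mathfrak{Nil}$-Hamiltonian (Theorem~\ref{T-3.10}), so $G'$ has a non-$\mathfrak{Nil}$ subgroup $S$ with $S\nno G'$. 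Then $S\times D$ is non-$\mathfrak{Nil}$, is not minimal non-nilpotent (it has the proper non-$\mathfrak{Nil}$ subgroup $S$), and is non-normal in $G$: picking $h\in G'$ with $S^h\neq S$, one has $(S\times D)^h=S^h\times D$ and $(S\times D)\cap G'=S$, so $S^h\sub S$ would give $S^h=S$, a contradiction. This contradicts the para-$\mathfrak{Nil}$-Hamiltonian hypothesis. If instead $X\nor G$ with $G$ finite, then Theorem~\ref{T-3.6} gives $G\cong A_5$ or $G\cong\SL(2,5)$, neither of which possesses a normal minimal non-nilpotent subgroup, again a contradiction. Hence $G$ is soluble.

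I expect the genuine obstacle to lie in the solubility statement, and within it the infinite insoluble case, where the whole point is to manufacture, from a finite insoluble $G'$ and a centralizing infinite cyclic subgroup, a single non-normal subgroup that is neither nilpotent nor minimal non-nilpotent; the meta-$\mathfrak{Nil}$-Hamiltonian dichotomy (Theorem~\ref{T-3.10}) is what guarantees the needed non-normal non-$\mathfrak{Nil}$ subgroup $S$ of $G'$. By contrast, once $X^G$ is shown finite, the finiteness of $G'$ is immediate from the containment $H^G\sub HX^G$, so the main creative input is concentrated in that containment and in the construction above.
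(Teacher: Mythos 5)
Your first two steps are sound, and in fact the finiteness of $G'$ is handled by a genuinely different route than the paper's. You were right to note that Lemma~\ref{closer}(i) only needs $X^G$ (not all of $G$) to be locally graded, a point the paper glosses over; and your containment $H^G\sub HX^G$ for every $H\sub G$ is correct: $HX^G$ contains $X^G\supsetneq$ (or $=$) the non-nilpotent $X$, so it cannot be minimal non-nilpotent with $X^G$ normal inside it, hence it is normal in $G$, giving the uniform bound $|H^G:H|\leq|X^G|$ and finiteness of $G'$ via the paper's cited criterion \cite[Theorem 5.70]{DKS}. The paper argues differently: every subgroup of $G$ properly containing $X$ is automatically normal (non-normal would force it to be nilpotent or minimal non-nilpotent, both impossible since it properly contains the non-nilpotent $X$), so $G/X^G$ (or $G/X$ when $X\nor G$) is a Dedekind group, whose derived subgroup is finite. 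Both arguments are valid; the paper's has the advantage that the Dedekind quotient does double duty for solubility, which is exactly where your proof breaks down.

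The genuine gap is in your infinite insoluble case: you ``choose $g$ with $gG'$ of infinite order,'' but an infinite abelian group need not contain any element of infinite order --- $G/G'$ could perfectly well be torsion (Pr\"ufer-type or infinite elementary abelian), and nothing you have established rules out $G$ being an infinite torsion group. What your construction actually requires is some nontrivial $D\sub\C_G(G')$ with $D\cap G'=1$, and even this weaker statement is not automatic: a priori every nontrivial subgroup of $\C_G(G')$ could meet the finite subgroup $\C_G(G')\cap G'\sub Z(G')$ (Pr\"ufer-like behaviour), and you give no argument excluding this; since the whole case is a proof by contradiction, you cannot appeal to the eventual impossibility of such a $G$. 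The paper avoids the issue entirely: if $X\nor G$ then $G/X$ is Dedekind (hence soluble) and $X$ is finite soluble, so $G$ is soluble; if $X\nno G$ and $G$ is infinite, Lemma~\ref{closer}(ii) gives $X\nor X^G$, so $X^G$ is soluble ($X$ is finite soluble and maximal in $X^G$, so $X^G/X$ has prime order) and $G/X^G$ is Dedekind, whence $G$ is soluble --- with no torsion considerations at all. Your fallback for finite $G$ with $X\nor G$ via Theorem~\ref{T-3.6} is correct but unnecessary once this is in place; as written, the infinite case of your solubility argument is incomplete.
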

\begin{proof}
According to Corollary ~\ref{C-4.3}, $X$ is finite and hence is soluble. If $X\nor G$, then every subgroup of $G/X$ is normal, hence $G$ is soluble. Since $(G/X)'\cong G'/(G'\cap X)$ is finite, then $G'$ is finite. Now suppose that $X\nno G$. As $X$ is maximal in $X^G$, according to Lemma ~\ref{closer}, $X^G$ is finite. Since every subgroup of $G/X^G$ is normal, $(G/X^G)'$ is finite and hence $G'$ is finite. Suppose that $G$ is infinite. According to Lemma ~\ref{closer}, $X\nor X^G$. Therefore, $X^G$ is soluble and hence $G$ is solvable.
\end{proof}

\begin{corollary}\label{C-4.7}
Let $G$ be an insoluble para-$\mathfrak{Nil}$-Hamiltonian group. If $G$ has a finitely generated minimal non-nilpotent subgroup whose normal closure is locally graded, then $G\cong A_5$ or $G\cong\SL(2,5)$.
\end{corollary}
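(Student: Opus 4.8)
The plan is to obtain this corollary as a short synthesis of the two finiteness theorems already established, with no new structural work required. First I would invoke Theorem~\ref{T-4.6}, whose hypotheses are met verbatim: $G$ is para-$\mathfrak{Nil}$-Hamiltonian and, by assumption, it contains a finitely generated minimal non-nilpotent subgroup $X$ whose normal closure $X^G$ is locally graded. That theorem then delivers two conclusions: that $G'$ is finite, and—crucially for us—its ``furthermore'' clause, which asserts that if either $X\nor G$ or $G$ is infinite, then $G$ is soluble.

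The key step is to read this ``furthermore'' clause contrapositively. Since $G$ is assumed insoluble, the conclusion ``$G$ is soluble'' is impossible, so the disjunctive hypothesis producing it must fail outright. Hence \emph{neither} $X\nor G$ \emph{nor} ``$G$ infinite'' can occur; in other words $X\nno G$ and, more to the point, $G$ is finite. This is the entire mechanism by which insolubility is leveraged against Theorem~\ref{T-4.6} to force finiteness.

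Once $G$ is known to be finite it is, by hypothesis, a finite insoluble para-$\mathfrak{Nil}$-Hamiltonian group, so Theorem~\ref{T-3.6} applies directly and gives $G\cong A_5$ or $G\cong\SL(2,5)$, completing the argument. I do not expect a genuine obstacle here, as the substantive content has been front-loaded into Theorems~\ref{T-4.6} and~\ref{T-3.6}. The only point worth a line of care is that the subgroup $X$ is actually finite, so that $X^G$ is finitely generated and Lemma~\ref{closer} can be applied inside the proof of Theorem~\ref{T-4.6}; this follows from Corollary~\ref{C-4.3} once one observes that a finitely generated subgroup of the locally graded group $X^G$ is itself locally graded, being a group all of whose finitely generated subgroups lie in $X^G$ and hence contain proper subgroups of finite index.
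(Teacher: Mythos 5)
Your proposal is correct and is essentially identical to the paper's own proof: the paper likewise applies Theorem~\ref{T-4.6} and reads its ``furthermore'' clause contrapositively against insolubility to conclude $X\nno G$ and that $G$ is finite, then finishes by Theorem~\ref{T-3.6}. Your closing observation that $X$ is finite (via subgroup-closedness of local gradedness and Corollary~\ref{C-4.3}) is a detail already internal to the paper's proof of Theorem~\ref{T-4.6}, so no new argument is needed.
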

\begin{proof}
Let $X$ be a finitely generated minimal non-nilpotent subgroup of $G$. According to Theorem ~\ref{T-4.6}, $X\nno G$ and $G$ is finite. Hence the statement follows by Theorems ~\ref{T-3.6}.
\end{proof}
Every locally graded biminimal non-abelian group is finite and its order is divisible by at most three prime numbers~\cite[Theorem 1]{AtGio}. The similar result holds for finitely generated locally graded biminimal non-nilpotent groups.

\begin{theorem}\label{T-4.8}
Let $G$ be a finitely generated locally graded biminimal non-nilpotent group. Then $G$ is finite, and its order is divisible by at most three prime numbers. 
\end{theorem}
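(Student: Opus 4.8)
The plan is to reduce the whole statement to the existence of a \emph{finitely generated} minimal non-nilpotent subgroup, and then to run the para-$\mathfrak{Nil}$-Hamiltonian machinery of this section. Observe first that a biminimal non-nilpotent group is in particular para-$\mathfrak{Nil}$-Hamiltonian: it is non-nilpotent, and since the whole group is normal, every non-normal subgroup is proper and hence is either nilpotent or minimal non-nilpotent. Thus Lemma~\ref{closer}, Theorem~\ref{T-4.6}, Corollary~\ref{C-3.7} and Theorem~\ref{T-3.6} will be available once a suitable subgroup is produced. Since $G$ is not minimal non-nilpotent, it possesses a proper non-nilpotent subgroup; if such a subgroup can be chosen finitely generated, the biminimal hypothesis immediately forces it to be minimal non-nilpotent, giving the desired $X$.

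The main obstacle is precisely to secure this finitely generated subgroup, and I would argue by contradiction. Suppose every finitely generated proper subgroup of $G$ is nilpotent. Then every proper subgroup is locally nilpotent, so $G$ is a finitely generated non-nilpotent group all of whose proper subgroups are locally nilpotent. Local gradedness supplies a proper subgroup of finite index, which is finitely generated and therefore nilpotent; hence $G$ is nilpotent-by-finite and has a nilpotent normal subgroup $N$ of finite index. By Theorem~\ref{NW1}, if $G/\gamma_{\infty}(G)\neq 1$ then $G$ is finite, whence all proper subgroups are finite and nilpotent and $G$ itself is minimal non-nilpotent, contradicting biminimality; so $\gamma_{\infty}(G)=G$. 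Consequently $Q:=G/N$ is a nontrivial finite perfect group, hence insoluble, and in particular not minimal non-nilpotent, since finite minimal non-nilpotent groups are soluble. Therefore $Q$ has a proper non-nilpotent subgroup $R=R_0/N$ with $R_0$ a proper subgroup of $G$; but $R_0$ is locally nilpotent, so its finite quotient $R$ is nilpotent, a contradiction. This establishes the existence of a finitely generated minimal non-nilpotent subgroup $X$.

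Having produced $X$, finiteness of $G$ follows quickly. By Corollary~\ref{C-4.3}, $X$ is finite. Next I would show $X$ is \emph{maximal} in $G$: if $X<Y\leq G$ with $Y$ proper, then $Y$ is non-nilpotent (it contains the non-nilpotent $X$) and hence minimal non-nilpotent, contradicting that its proper subgroup $X$ is non-nilpotent; so the only subgroup properly containing $X$ is $G$. If $X\nor G$, maximality forces $G/X$ to have no proper nontrivial subgroup, so $G/X\cong\Z_r$ for a prime $r$ and $G$ is finite. If $X\nno G$, then $X^G$ strictly contains $X$, so $X^G=G$ by maximality; since $X$ is finite, non-normal and non-nilpotent while $G$ is locally graded, Lemma~\ref{closer}(ii) gives that either $X\nor X^G$ or $G$ is finite, and the first alternative is impossible here, so $G$ is finite.

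Finally, for the prime bound on the now-finite group $G$, I would split on solubility. If $G$ is insoluble, Corollary~\ref{C-3.7} (through Theorem~\ref{T-3.6}) gives $G\cong A_5$ or $G\cong\SL(2,5)$, each with $\pi(G)=\{2,3,5\}$. If $G$ is soluble, then not all maximal subgroups are nilpotent, for otherwise every proper subgroup would be nilpotent and $G$ would be minimal non-nilpotent; hence some maximal subgroup $M$ is minimal non-nilpotent, and being a finite Schmidt group it satisfies $|\pi(M)|=2$. Since the index of a maximal subgroup of a finite soluble group is a prime power, $|G:M|$ introduces at most one further prime, so $\pi(G)$ has at most three elements. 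The delicate point throughout is the existence argument of the second paragraph, where the interplay of local gradedness, Theorem~\ref{NW1}, and the solubility of finite minimal non-nilpotent groups is essential.
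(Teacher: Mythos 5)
Your proposal is correct, but it reaches the finiteness of $G$ by a genuinely different mechanism than the paper's. The paper starts, as you do, from a proper normal subgroup $N$ of finite index supplied by local gradedness (if $N$ is non-nilpotent it is a finitely generated Schmidt subgroup and Corollary~\ref{C-4.3} finishes at once), but in the case where $N$ is nilpotent it exploits that a finitely generated nilpotent group is supersoluble: taking the minimal non-nilpotent subgroup $M$ guaranteed by biminimality (which, as in your argument, is maximal in $G$), it proves $|G:M|$ is finite by a core computation inside $N$ --- choosing a maximal subgroup $H$ of $N$ containing $N\cap M$, which has finite index in $N$ by supersolubility, and showing $N\cap M=\Core_G(H)$ --- and then applies Corollary~\ref{C-4.3} to $M$. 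This sidesteps what you rightly identify as the main obstacle: the paper never needs its Schmidt subgroup to be finitely generated in advance, because finite index delivers that. You instead manufacture a finitely generated Schmidt subgroup by contradiction --- if every finitely generated proper subgroup were nilpotent, all proper subgroups would be locally nilpotent, Theorem~\ref{NW1} together with biminimality would force $\gamma_{\infty}(G)=G$, hence $G$ perfect and $G/N$ a nontrivial finite perfect group all of whose proper subgroups are (images of locally nilpotent groups, hence) nilpotent, which is impossible since finite minimal non-nilpotent groups are soluble --- and then conclude finiteness from maximality of $X$ plus Lemma~\ref{closer}, via the observation (true, and implicit in the paper's framework but not used in its proof of this theorem) that a biminimal non-nilpotent group is para-$\mathfrak{Nil}$-Hamiltonian. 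Each step of yours checks out: the reduction to local nilpotence of proper subgroups, the perfectness deduction from $\gamma_{\infty}(G)=G\leqslant G'$, the prime-order quotient when $X\nor G$, and the application of Lemma~\ref{closer} when $X^G=G$ (indeed part (i) already gives $G=X^G$ finite directly). Your route avoids supersolubility and the core argument entirely and recycles the Section~4 para-Hamiltonian machinery; the paper's route is more self-contained (no appeal to Lemma~\ref{closer}, nor to Newman--Wiegold applied to $G$ itself) and yields the extra structural fact that the Schmidt subgroup has finite index. The concluding prime-divisor count is essentially the same in both: Corollary~\ref{C-3.7} in the insoluble case, and in the soluble case a maximal Schmidt subgroup whose order has exactly two prime divisors together with the prime-power index of a maximal subgroup.
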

\begin{proof}
Since $G$ is locally graded, it has a proper normal subgroup $N$ of finite index. It is obvious that $N$ is finitely generated. If $N$ is non-nilpotent, it is finite by Corollary ~\ref{C-4.3}, therefore the statement holds. So assume that $N$ is nilpotent. Then $N$ is supersoluble by~\cite[5.4.6(ii)]{Rob}. Since $G$ is a biminimal non-nilpotent group, it has a proper subgroup $M$ that is minimal non-nilpotent. Also, $M$ is a maximal subgroup of $G$. We show that $M$ has finite index, thus it is finitely generated, and according to Corollary ~\ref{C-4.3}, $G$ will be finite.

It can be assumed that $N\nsub M$, then $G=NM$ and hence $N\cap M\nor G$. Let $H$ be the maximal subgroup of $N$ containing $N\cap M$. Since $N$ is supersoluble, $H$ has finite index in $N$ \cite[5.4.7]{Rob}, also  $N\cap M\sub H_G$, where $H_G=\Core_G(H)$. If $N\cap M\neq H_G$ then $G=H_GM$. Now for each $x\in N\bk H$, there are $h\in H_G$ and $m\in M$, where $x=hm$. Then $h^{-1}x=m\in N\cap M\sub H$, which is a contradiction. Therefore, $N\cap M=H_G$ and $M$ has finite index. 

The statement about prime divisors holds for any insoluble group by Corollary ~\ref{C-3.7}. So suppose that $G$ is a soluble group and $H$ is a non-nilpotent subgroup of $G$. Therefore, $H$ is minimal non-nilpotent and is maximal in $G$. As $|H|$ has two prime factors and $|G:H|$ is a power of a prime number, so the order of $G$ has at most three prime factors.   
\end{proof}

\begin{corollary}\label{C-4.9}
Let $G$ be a locally graded biminimal non-nilpotent group. If $G$ has a finitely generated non-nilpotent subgroup, then $G$ is finite.
\end{corollary}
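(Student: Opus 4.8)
The plan is to reduce to the finitely generated case settled in Theorem~\ref{T-4.8}: once I know $G$ is finitely generated, that theorem immediately gives finiteness (and even the bound on the number of primes). So the entire content is to manufacture a finite generating set for $G$ out of the single finitely generated non-nilpotent subgroup we are handed.

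First I would fix a finitely generated non-nilpotent subgroup $H$ of $G$. If $H=G$ we are done at once, since $G$ is then finitely generated, locally graded and biminimal non-nilpotent, and Theorem~\ref{T-4.8} applies. So assume $H$ is proper. As $G$ is biminimal non-nilpotent, every proper subgroup is nilpotent or minimal non-nilpotent; since $H$ is non-nilpotent it is minimal non-nilpotent. Moreover $H$ inherits local gradedness from $G$---every non-trivial finitely generated subgroup of $H$ is one of $G$ and hence has a proper subgroup of finite index---so $H$ is a finitely generated, locally graded, minimal non-nilpotent group, and is therefore finite by Corollary~\ref{C-4.3}.

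Next I would promote this finite subgroup to generators by adjoining one element. Since $G$ is biminimal non-nilpotent it is not itself minimal non-nilpotent, so $G\neq H$; choose $g\in G\bk H$ and put $K=\cyc{H,g}$. Then $K$ is finitely generated, it contains the non-nilpotent group $H$ (so $K$ is non-nilpotent), and it contains it properly because $g\notin H$. If $K$ were a proper subgroup of $G$, biminimality would force $K$ to be minimal non-nilpotent; but a minimal non-nilpotent group has all of its proper subgroups nilpotent, contradicting the fact that the proper subgroup $H$ of $K$ is non-nilpotent. Hence $K=G$, so $G=\cyc{H,g}$ is finitely generated and Theorem~\ref{T-4.8} finishes the proof.

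The argument is short given Theorem~\ref{T-4.8} and Corollary~\ref{C-4.3}, and the one place that carries the real idea is this last bootstrap: the rigidity of minimal non-nilpotent groups (no non-nilpotent proper subgroups) is exactly what makes $\cyc{H,g}$ leap all the way up to $G$, so that adjoining a single generator suffices and no chain condition or residual-finiteness input is needed. The only technical point I would be careful to record is the passage of local gradedness to the subgroup $H$, which is what legitimizes the appeal to Corollary~\ref{C-4.3}.
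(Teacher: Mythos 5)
Your proof is correct and follows essentially the same route as the paper: both reduce to Theorem~\ref{T-4.8} by showing that the minimal non-nilpotent subgroup is maximal in $G$ (your $K=\cyc{H,g}$ bootstrap is exactly the paper's assertion that $X$ is maximal), whence $G$ is finitely generated. The only difference is cosmetic: your detour through Corollary~\ref{C-4.3} to make $H$ finite is actually unnecessary in your version, since $K=\cyc{H,g}$ is finitely generated already because $H$ is.
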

\begin{proof}
Let $X$ be a finitely generated non-nilpotent subgroup of $G$. According to Theorem ~\ref{T-4.8}, it can be assumed that $X$ is a proper subgroup of $G$. Therefore, $X$ is minimal non-nilpotent and so it is a maximal subgroup of $G$. By Corollary \ref{C-4.3}, $X$ is finite, therefore $G$ is finitely generated. Now $G$ is finite by Theorem ~\ref{T-4.8}.
\end{proof}

Suppose that $G$ is a locally graded infinite group whose subgroups are either nilpotent or minimal non-nilpotent. According to Corollary ~\ref{C-4.9}, if $G$ is not nilpotent, then it is neither finite nor does it have any non-nilpotent subgroups that are finitely generated. Therefore, it is locally nilpotent.

\begin{corollary}\label{C-4.10}
Let $G$ be an infinite locally graded group whose subgroups are either nilpotent or minimal non-nilpotent. Then $G$ is locally nilpotent.
\end{corollary}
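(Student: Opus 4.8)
The plan is to reduce local nilpotence to the single assertion that $G$ has no finitely generated non-nilpotent subgroup, and then to rule out such a subgroup using the finiteness results already in hand. If $G$ is itself nilpotent there is nothing to prove, so I would assume from the start that $G$ is non-nilpotent; a group is locally nilpotent precisely when each of its finitely generated subgroups is nilpotent, so it suffices to show that every finitely generated subgroup of $G$ is nilpotent.

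The first genuine step is to place $G$ into one of the two extremal classes that the finiteness theorems address. Since $G$ is non-nilpotent and each of its proper subgroups is either nilpotent or minimal non-nilpotent, exactly one of two situations arises: either every proper subgroup is nilpotent, so that $G$ is minimal non-nilpotent, or some proper subgroup is minimal non-nilpotent, so that $G$ is neither nilpotent nor minimal non-nilpotent while all its proper subgroups are nilpotent or minimal non-nilpotent, i.e.\ $G$ is biminimal non-nilpotent. These are the only possibilities, and I would dispatch them separately.

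In the biminimal non-nilpotent case I would invoke Corollary~\ref{C-4.9} directly: if $G$ possessed a finitely generated non-nilpotent subgroup, that corollary would force the locally graded group $G$ to be finite, contradicting the hypothesis that $G$ is infinite. Hence no finitely generated subgroup of $G$ is non-nilpotent, and $G$ is locally nilpotent. In the minimal non-nilpotent case the reasoning is even shorter: every proper subgroup of $G$ is nilpotent, so any finitely generated non-nilpotent subgroup would have to be all of $G$; but then $G$ would be a finitely generated locally graded minimal non-nilpotent group, hence finite by Corollary~\ref{C-4.3}, again contradicting infiniteness. Thus $G$ is not finitely generated, every finitely generated subgroup is proper and therefore nilpotent, and $G$ is locally nilpotent.

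I expect the only delicate point to be the bookkeeping of this case split, namely recognizing that a non-nilpotent group all of whose proper subgroups are nilpotent or minimal non-nilpotent must itself be either minimal non-nilpotent or biminimal non-nilpotent, so that precisely one of Corollary~\ref{C-4.3} and Corollary~\ref{C-4.9} applies. All of the substantive group theory has already been carried out in establishing those two corollaries; once the classification is made, the contradiction with infiniteness is immediate in each case.
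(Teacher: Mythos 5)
Your proof is correct and takes essentially the same route as the paper, whose proof is the short paragraph preceding the corollary: both arguments reduce local nilpotence to showing that $G$ has no finitely generated non-nilpotent subgroup, ruling such a subgroup out by the finiteness results and the hypothesis that $G$ is infinite. Your explicit case split---handling the possibility that $G$ itself is minimal non-nilpotent via Corollary~\ref{C-4.3} rather than only the biminimal case covered by Corollary~\ref{C-4.9}---is a point the paper's terse appeal to Corollary~\ref{C-4.9} leaves implicit, so your version is, if anything, slightly more complete.
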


\section{Meta-$\mathfrak{Nil}$-Hamiltonian groups}

In this section, we will discuss some properties of meta-$\mathfrak{Nil}$-Hamiltonian groups. Firstly, we will prove that if a finitely generated meta-$\mathfrak{Nil}$-Hamiltonian group is non-nilpotent but has a nilpotent subgroup of finite index, then it is soluble with a finite derived subgroup. Secondly, we will show that any locally graded meta-$\mathfrak{Nil}$-Hamiltonian group is soluble.

\begin{lemma}\label{L-5.1}
Let $G$ be a biminimal non-nilpotent locally nilpotent group. Then $G$ is meta-$\mathfrak{Nil}$-Hamiltonian.
\end{lemma}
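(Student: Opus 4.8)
The plan is to verify the definition directly: every non-nilpotent subgroup of $G$ must be shown to be normal. Let $X$ be a non-nilpotent subgroup of $G$. If $X=G$ there is nothing to prove, so I would assume $X$ is proper. Since $G$ is biminimal non-nilpotent, each proper subgroup of $G$ is either nilpotent or minimal non-nilpotent; as $X$ is non-nilpotent, it is forced to be minimal non-nilpotent.

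The key step is to establish that such an $X$ is \emph{maximal} in $G$. Suppose toward a contradiction that $X<Y<G$ for some subgroup $Y$. Being a proper subgroup of the biminimal non-nilpotent group $G$, $Y$ is nilpotent or minimal non-nilpotent; since $Y$ contains the non-nilpotent subgroup $X$, it cannot be nilpotent, so $Y$ is minimal non-nilpotent. But then $X$ is a proper non-nilpotent subgroup of $Y$, contradicting the fact that every proper subgroup of the minimal non-nilpotent group $Y$ is nilpotent. Hence no intermediate $Y$ exists and $X$ is maximal in $G$.

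Finally, I would invoke local nilpotency: by \cite[12.1.5]{Rob}, every maximal subgroup of a locally nilpotent group is normal, so $X\nor G$. Consequently every non-nilpotent subgroup of $G$ is normal, which is exactly the statement that $G$ is meta-$\mathfrak{Nil}$-Hamiltonian.

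I do not anticipate a serious obstacle here: the whole argument rests on the single observation that a proper minimal non-nilpotent subgroup cannot be strictly contained in another proper subgroup, which pins it down as maximal, after which \cite[12.1.5]{Rob} completes the proof. The only point demanding a little care is confirming that the biminimal hypothesis genuinely excludes an intermediate subgroup $Y$—and it does, precisely because $Y$ would then be a minimal non-nilpotent group harboring the proper non-nilpotent subgroup $X$.
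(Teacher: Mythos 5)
Your proposal is correct and follows essentially the same route as the paper: identify a proper non-nilpotent subgroup $X$ as minimal non-nilpotent, deduce that $X$ is maximal in $G$, and conclude normality from \cite[12.1.5]{Rob} (the paper phrases this last step via $G'\sub\Phi(G)\sub X$, you via the equivalent ``maximal subgroups of locally nilpotent groups are normal''). In fact you spell out the maximality argument that the paper merely asserts, which is a welcome addition.
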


\begin{proof}
If $X$ is a proper non-nilpotent subgroup of $G$, then $X$ is a maximal subgroup of $G$. Hence $\Phi(G) \sub X$. On the other hand, since $G$ is a locally nilpotent, $ G' \sub X$ and consequently $X$ is normal.
\end{proof}

\begin{lemma}\label{L-5.2}
Let $G$ be a non-nilpotent finitely generated meta-$\mathfrak{Nil}$-Hamiltonian group. If every proper subgroup of $G$ is locally nilpotent, then either $G$ is finite or $G/\Phi(G)$ is an infinite simple group.
\end{lemma}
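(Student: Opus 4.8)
The plan is to dispose of the finite case immediately and, assuming $G$ infinite, to identify $\Phi(G)$ with the Hirsch--Plotkin radical of $G$, from which simplicity of $G/\Phi(G)$ can be read off directly. The one genuinely extra ingredient is the \emph{infiniteness} of that quotient.

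First I would apply Theorem~\ref{NW1}: since $G$ is finitely generated, non-nilpotent, and all of its proper subgroups are locally nilpotent, its hypotheses hold verbatim. If $G$ is finite we are in the first alternative and there is nothing to prove, so assume $G$ is infinite. Then $G/\gamma_{\infty}(G)$ must be trivial, for otherwise Theorem~\ref{NW1} would force $G$ to be finite; hence $\gamma_{\infty}(G)=G$, and since $\gamma_{\infty}(G)\sub\gamma_2(G)$ this yields $\gamma_2(G)=G$, i.e. $G'=G$. So $G$ is perfect, a fact I will use twice below.

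Next let $R$ denote the Hirsch--Plotkin radical of $G$, the unique maximal normal locally nilpotent subgroup. Every proper normal subgroup $N$ of $G$ is a proper subgroup, hence locally nilpotent by hypothesis, hence $N\sub R$. Because $G$ is finitely generated and non-nilpotent it is not locally nilpotent (a finitely generated locally nilpotent group is nilpotent), so $R\neq G$; thus $R$ is the largest proper normal subgroup of $G$ and $G/R$ is simple. I then claim $R=\Phi(G)$. The inclusion $\Phi(G)\sub R$ is clear, as $\Phi(G)$ is a proper normal subgroup. For the reverse, let $M$ be any maximal subgroup; if $R\nsub M$ then $G=RM$, so $G/R\cong M/(M\cap R)$ is a quotient of the locally nilpotent group $M$ and therefore locally nilpotent, and being finitely generated it would be nilpotent --- contradicting the fact that $G/R$ is a nontrivial perfect group. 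Hence $R\sub M$ for every maximal $M$, giving $R\sub\Phi(G)$, and so $G/\Phi(G)=G/R$ is simple.

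It remains to show $G/\Phi(G)$ is infinite, and this is the step I expect to be the main obstacle, since simplicity by itself does not preclude a finite quotient. Suppose $G/R$ were finite. Being simple and perfect it is a finite non-abelian simple group, hence insoluble; but a finite group all of whose proper subgroups are nilpotent is soluble (it is either nilpotent or finite minimal non-nilpotent, and the latter are soluble). Therefore $G/R$ has a non-nilpotent proper subgroup $\bar H=H/R$ with $R\sub H<G$. Since $G/R$ is finite, $H$ has finite index in $G$ and so is finitely generated; as a proper subgroup it is locally nilpotent, hence nilpotent, whence $\bar H=H/R$ is nilpotent --- a contradiction. Thus $G/R$ is infinite, and $G/\Phi(G)$ is an infinite simple group. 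I note that this route appears to use only that $G$ is finitely generated, non-nilpotent, and has all proper subgroups locally nilpotent; the meta-$\mathfrak{Nil}$-Hamiltonian hypothesis does not seem to enter.
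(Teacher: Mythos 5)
Your proof is correct, and it takes a genuinely different route from the paper's. The paper argues by cases: if $G$ is infinite and minimal non-nilpotent, it quotes Theorem~\ref{NW2} directly to conclude that $G/\Phi(G)$ is non-abelian simple (infiniteness coming from the observation, made before Corollary~\ref{C-4.3}, that an infinite finitely generated minimal non-nilpotent group has no subgroup of finite index); otherwise $G$ has a proper non-nilpotent subgroup $X$, and this is exactly where the meta-$\mathfrak{Nil}$-Hamiltonian hypothesis enters: $X\nor G$, the quotient $G/X$ is Dedekind, hence $\gamma_3(G)\sub X<G$, so $G/\gamma_{\infty}(G)$ is non-trivial and Theorem~\ref{NW1} forces $G$ to be finite. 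You instead invoke Theorem~\ref{NW1} once, only to extract perfectness of $G$ in the infinite case, and then replace Theorem~\ref{NW2} by a self-contained argument: every proper normal subgroup lies in the Hirsch--Plotkin radical $R$, which is proper since a finitely generated locally nilpotent group is nilpotent, giving simplicity of $G/R$; the identification $R=\Phi(G)$ works because $G/R$ is a finitely generated perfect non-trivial group and so cannot be locally nilpotent; and the finite-quotient case is excluded via Schmidt's theorem (finite minimal non-nilpotent groups are soluble) together with the fact that finite-index subgroups of finitely generated groups are finitely generated. All of these steps check out. Your closing remark is also accurate: your argument never uses normality of non-nilpotent subgroups, so it actually proves the stronger statement that any finitely generated non-nilpotent group all of whose proper subgroups are locally nilpotent is either finite or has an infinite simple Frattini factor --- in effect re-deriving, in-line, a strengthening of Theorem~\ref{NW2} rather than citing it. What the paper's shorter route buys in exchange is the extra structural information that, under the meta-$\mathfrak{Nil}$-Hamiltonian hypothesis, an infinite such $G$ is in fact minimal non-nilpotent (no proper non-nilpotent subgroup can exist, by its case (B) argument), which your proof does not directly yield.
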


\begin{proof}
If $G$ is an infinite minimal non-nilpotent group, then $G/\Phi(G)$ is non-abelian and simple according to Theorem~\ref{NW2}. Otherwise $G$ has a non-nilpotent proper subgroup $X$. Since $G/X$ is a Dedekind group, then $\gamma_3(G)\sub X$. Therefore, $G/\gamma_{\infty}(G)$ is non-trivial. Now the statement  follows from Theorem~\ref{NW1}.
\end{proof}

Therefore, every non-nilpotent finitely generated locally nilpotent group whose non-nilpotent subgroups are normal, is either finite or infinite insoluble. 

The following lemma holds with a completely similar proof from~\cite[Lemma 3.1]{FGM1}.

\begin{lemma}\label{L-5.3}
Let $\mathfrak{X}$ be a class of groups that is subgroup closed. Assume that any non-$\mathfrak{X}$ subgroup of $G$ contains a finitly generated non-$\mathfrak{X}$ subgroup. If every finitely generated subgroup of $G$ is meta-$\mathfrak{X}$-Hamiltonian, then $G$ is meta-$\mathfrak{X}$-Hamiltonian.
\end{lemma}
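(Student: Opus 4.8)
The plan is to verify the defining property directly. I would take an arbitrary non-$\mathfrak{X}$ subgroup $H\sub G$ and show that $H\nor G$ by proving that $h^g\in H$ for every $h\in H$ and every $g\in G$. The whole idea is to manufacture, for each such pair $(h,g)$, a single finitely generated subgroup of $G$ to which the hypothesis applies, and then to read the conclusion off from the normality that is forced inside that subgroup.

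First I would use the standing assumption to fix a finitely generated non-$\mathfrak{X}$ subgroup $H_0\sub H$. The role of $H_0$ is to certify non-$\mathfrak{X}$-ness: since $\mathfrak{X}$ is subgroup closed, any subgroup of $G$ that contains $H_0$ is automatically non-$\mathfrak{X}$. Then, given $h\in H$ and $g\in G$, I would form the two finitely generated subgroups $F=\cyc{H_0,h,g}$ and $K=\cyc{H_0,h}$. Here $K\sub H$ because $H_0\sub H$ and $h\in H$, while $K$ contains $H_0$, so $K$ is a non-$\mathfrak{X}$ subgroup of $F$.

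Next I would invoke the hypothesis on $F$: being finitely generated, $F$ is meta-$\mathfrak{X}$-Hamiltonian, so its non-$\mathfrak{X}$ subgroup $K$ satisfies $K\nor F$. Since $g\in F$, this gives $K^g=K$, and as $h\in K$ we obtain $h^g\in K\sub H$. Because $h$ and $g$ were arbitrary, this yields $H^g\sub H$ for all $g\in G$; applying the same inclusion with $g^{-1}$ in place of $g$ gives $H\sub H^g$, whence $H^g=H$. Therefore $H\nor G$, which is exactly what meta-$\mathfrak{X}$-Hamiltonicity requires.

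I do not expect a serious obstacle: the argument is a routine localization of the property from finitely generated subgroups to $G$. The one point that genuinely deserves care is to resist working with $H_0$ alone. The same reasoning applied to $F=\cyc{H_0,g}$ would only show $H_0\nor G$, which says nothing about the larger group $H$. The essential move is therefore to absorb the test element $h$ into the finitely generated subgroup \emph{alongside} the certifying piece $H_0$, so that the finitely generated meta-$\mathfrak{X}$-Hamiltonian property of $F=\cyc{H_0,h,g}$ delivers the normality of $\cyc{H_0,h}$ rather than of $H_0$; this is precisely what propagates conjugation-invariance from $h$ up to all of $H$.
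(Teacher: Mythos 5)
Your argument is correct and is essentially the paper's own proof: the paper fixes finitely generated non-$\mathfrak{X}$ subgroups $Y$ of the given non-$\mathfrak{X}$ subgroup $X$, uses the meta-$\mathfrak{X}$-Hamiltonicity of the finitely generated group $\cyc{Y,g}$ to conclude $Y^g=Y$ for all $g\in G$, and then writes $X$ as the join of all such normal $Y$ (noting that $\cyc{Y,x}$ is again a finitely generated non-$\mathfrak{X}$ subgroup of $X$ for each $x\in X$). Your variant --- absorbing the test element $h$ into the witness $\cyc{H_0,h}$ and deducing $h^g\in H$ elementwise instead of expressing $H$ as a join of normal subgroups --- is only a cosmetic repackaging of the same localization argument.
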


\begin{proof}
Suppose that $X$ is a non-$\mathfrak{X}$ subgroup of $G$ and $\mathfrak{N}$ is the set of finitely generated non-$\mathfrak{X}$ subgroups of $X$, by assumption $\mathfrak{N}$ is non-empty. Now for any $Y\in\mathfrak{N}$, $\cyc{Y,g}$ is finitely generated for all $g\in G$, so $Y^g=Y$, then $Y\nor G$. Also for any $g\in X$,  $\cyc{Y, g}\in\mathfrak{N}$, so $X=\cyc{Y\,|\, Y\in\mathfrak{N}}\nor G$.
\end{proof}

\begin{lemma}\label{L-5.4}
Let $\mathfrak{X}$ be a class of groups that is subgroup closed and $G$ be a meta-$\mathfrak{X}$-Hamiltonian group. Suppose that $J$ is finite residual of $G$, $\mathfrak{N}$ is the class of non-$\mathfrak{X}$ subgroups of $G$ and $I=\bigcap_{X\in\mathfrak{N}}X$.
\begin{itemize}
\item[(i)] $\gamma_3(G)\sub I$ and $I$ is either an $\mathfrak{X}$-group or a minimal non-$\mathfrak{X}$ group.
\item[(ii)] If $G$ is not an $\mathfrak{X}$-by-finite, then $\gamma_3(G)\sub I\sub J$.
\end{itemize}
\end{lemma}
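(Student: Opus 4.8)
The plan is to exploit a single structural observation: because $\mathfrak{X}$ is subgroup closed, the quotient of $G$ by any non-$\mathfrak{X}$ normal subgroup is a \emph{Dedekind} group, and Dedekind groups are nilpotent of class at most $2$. Everything else is formal bookkeeping.

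First I would settle part (i). Fix any $X\in\mathfrak{N}$; since $G$ is meta-$\mathfrak{X}$-Hamiltonian and $X$ is non-$\mathfrak{X}$, we have $X\nor G$. I claim every subgroup of $G/X$ is normal. Indeed, if $X\sub Y\sub G$, then $Y$ contains the non-$\mathfrak{X}$ subgroup $X$, so $Y$ is itself non-$\mathfrak{X}$ (otherwise its subgroup $X$ would lie in $\mathfrak{X}$, as $\mathfrak{X}$ is subgroup closed); hence $Y\nor G$ and $Y/X\nor G/X$. Thus $G/X$ is Dedekind, so $\gamma_3(G/X)=1$, i.e. $\gamma_3(G)\sub X$. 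As $X\in\mathfrak{N}$ was arbitrary, $\gamma_3(G)\sub I$. For the dichotomy on $I$, suppose $I$ is not an $\mathfrak{X}$-group; then $I\in\mathfrak{N}$, so $I$ is the least member of $\mathfrak{N}$ under inclusion. If some proper subgroup $Y<I$ were non-$\mathfrak{X}$, then $Y\in\mathfrak{N}$ would force $I\sub Y<I$, a contradiction; hence every proper subgroup of $I$ belongs to $\mathfrak{X}$, so $I$ is minimal non-$\mathfrak{X}$.

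For part (ii) I would combine (i) with a dichotomy applied to the normal subgroups of finite index. Let $N\nor G$ with $|G:N|$ finite. If $N$ were an $\mathfrak{X}$-group, then $N$ would be a normal $\mathfrak{X}$-subgroup of finite index, making $G$ an $\mathfrak{X}$-by-finite group, contrary to hypothesis. Hence $N$ is non-$\mathfrak{X}$, so $N\in\mathfrak{N}$ and therefore $I\sub N$. Since $N$ was an arbitrary normal subgroup of finite index, $I$ is contained in their intersection $J$, and together with (i) this gives $\gamma_3(G)\sub I\sub J$.

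The argument is short once the first observation is in hand, so the main point requiring care is the justification that a group all of whose subgroups are normal has nilpotency class at most $2$; this is Dedekind's (and, in the infinite case, Baer's) classification of Hamiltonian groups, and it is exactly what delivers $\gamma_3(G)\sub X$ for each $X\in\mathfrak{N}$. The remaining steps — recognising $I$ as the minimum of $\mathfrak{N}$ when $I\notin\mathfrak{X}$, and the finite-index dichotomy driven by the non-$\mathfrak{X}$-by-finite hypothesis — are then immediate.
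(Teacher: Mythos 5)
Your proposal is correct and follows essentially the same route as the paper's proof: both parts rest on observing that $G/X$ is Dedekind (hence $\gamma_3(G)\sub X$) for each $X\in\mathfrak{N}$, identifying $I$ as the minimum of $\mathfrak{N}$ when $I\notin\mathfrak{X}$, and noting that the non-$\mathfrak{X}$-by-finite hypothesis forces every finite-index subgroup into $\mathfrak{N}$, so $I\sub J$. Your only departures are cosmetic: you spell out why $G/X$ is Dedekind via subgroup closure, and in (ii) you intersect over normal finite-index subgroups rather than all finite-index subgroups, which yields the same $J$.
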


\begin{proof}
(i) If $\mathfrak{N}=\emptyset$, then $I=G$ and there is nothing left to prove. Assume that $\mathfrak{N}\neq\emptyset$ and $X\in\mathfrak{N}$. Since $G/X$ is a Dedekind group, $\gamma_3(G)\sub X$. Therefore, $\gamma_3(G)\sub I$. Now suppose that $X\sub I$ is a non-$\mathfrak{X}$-group, then $I=X$ and so $I$ is a minimal non-$\mathfrak{X}$-group.

(ii) Suppose that $G$ is not an $\mathfrak{X}$-by-finite group. Let $\mathfrak{F}$ be the class of subgroups of finite index of $G$. Therefore, none of the members of $\mathfrak{F}$ is an $\mathfrak{X}$-group, so $\mathfrak{F}\sub\mathfrak{N}$. Thus,
\[\gamma_3(G)\sub I\sub\bigcap_{N\in\mathfrak{F}} N=J.\] 
\end{proof}

Notice here that, according to Lemma~\ref{L-5.4} (ii), non-nilpotent meta-$\mathfrak{Nil}$-Hamiltonian groups are not residually finite.

\begin{proposition}\label{P-5.5}
Let $G$ be a non-nilpotent finitely generated meta-$\mathfrak{Nil}$-Hamiltonian group. If $G$ has a torsion-free nilpotent subgroup $H$ of finite index, then $G\cong H_G\times K$, where $K$ is finite non-nilpotent soluble and $H_G$ is a free abelian group of finite rank. In particular, $G$ is soluble and $G'$ is finite.
\end{proposition}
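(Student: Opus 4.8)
The plan is to reduce to a polycyclic-by-finite situation, then prove the essential point that $H_G$ is central, and finally read off the splitting. Write $N=H_G=\Core_G(H)$, a torsion-free nilpotent normal subgroup of finite index in $G$. Being finitely generated and virtually nilpotent, $G$ is polycyclic-by-finite; hence it is residually finite, every subgroup is finitely generated, and — since each such subgroup is virtually nilpotent — every minimal non-nilpotent subgroup of $G$ is finite by Corollary~\ref{C-4.3}. These reductions are routine, and the whole difficulty is concentrated in understanding the conjugation action of $G$ on $N$.

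\textbf{Central claim: } $N\sub Z(G)$. I would test the action on a characteristic free abelian section. Take the isolated upper central series $1=N_0\sub N_1\sub\cdots\sub N_c=N$, whose factors $V_i=N_i/N_{i-1}$ are free abelian of finite rank and whose terms are characteristic in $N$, hence normal in $G$; since $N$ centralises each $V_i$, the induced action of $G$ on $V_i$ factors through a \emph{finite} group $\Gamma_i$. The key sublemma is that a nontrivial finite-order automorphism $\gamma$ of a free abelian group is never unipotent, as in characteristic $0$ a finite-order unipotent matrix is trivial. Working in $G/N_{i-1}$, if some $\gamma\in\Gamma_i$ induced by $g$ were nontrivial I would choose a $\gamma$-invariant subgroup $B$ with $\gamma|_B\neq 1$ but $(\gamma-1)V_i\nsub B$ (for instance $B=2(\gamma-1)V_i$); then $\cyc{B,g}$ is non-nilpotent because $g$ acts non-unipotently on $B$, while conjugating by a $v\in V_i$ for which the coboundary $(\gamma-1)v$ escapes $B$ moves $\cyc{B,g}$ off itself. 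Lifting this back to $G$ yields a non-nilpotent subgroup that is not normal, contradicting the hypothesis. Hence every $\Gamma_i$ is trivial, so $G$ acts unipotently on $N$ and the finite ``outer'' part centralises $N$; one then kills $N'$ by the same device, conjugating a subgroup $\cyc{a}\times M$ (with $a\in N\setminus Z(N)$ and $M$ a finite non-nilpotent subgroup of $C_G(N)$) by a $b\in N$ with $[a,b]\notin M$, to force $N'=1$ and $[N,G]=1$. Thus $N$ is free abelian of finite rank and central.

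\textbf{From centrality to the decomposition.} Once $N\sub Z(G)$ has finite index, Schur's theorem shows $G'$ is finite; as $G'\cap N\sub G'$ is finite and torsion-free it is trivial. Consequently the torsion subgroup $K$ of $G$ (the preimage of the torsion part of the finitely generated abelian group $G/G'$) is finite and normal, and $G/K$ is torsion-free abelian, hence free abelian of finite rank. Since a free abelian quotient splits, I would extract a central free abelian complement $A$ to $K$, using that $Z(G)$ already has finite index, to obtain $G=A\times K$ with $A$ the free abelian factor. Finally $K\cong G/A$ is a finite quotient of a meta-$\mathfrak{Nil}$-Hamiltonian group (any non-nilpotent subgroup of the quotient lifts to a non-nilpotent, hence normal, subgroup), so $K$ is itself meta-$\mathfrak{Nil}$-Hamiltonian and therefore soluble by Theorem~\ref{T-3.10}; it is non-nilpotent because $G$ is and $A$ is central. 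In particular $G=A\times K$ is soluble with $G'=K'$ finite.

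\textbf{Main obstacle.} The genuine work is the central claim: leveraging the meta-$\mathfrak{Nil}$-Hamiltonian hypothesis to force the finite group $G/N$ to act trivially on the free abelian factors of $N$, via the module-theoretic construction of a non-normal non-nilpotent subgroup out of any nontrivial finite-order action. The hard part will also be the concluding step of pinning down a \emph{finite} complement rather than one merely of finite index, i.e. controlling the torsion so that $K$ is finite — this is exactly where the behaviour of reflection-type elements of infinite order must be ruled out.
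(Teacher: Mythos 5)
Your overall architecture (reduce to polycyclic-by-finite, prove $N=H_G$ is central, then split off the torsion) is a genuinely different route from the paper's, but it has two gaps that I don't see how to close as written. First, the passage from ``every $\Gamma_i$ is trivial'' to $N\sub Z(G)$ does not follow for nonabelian $N$. Your key sublemma (a finite-order automorphism of a free abelian group cannot be unipotent unless trivial) applies only to automorphisms that actually have finite order; but conjugation by $g\in G$ on $N$ has finite order only \emph{modulo} inner automorphisms, since $\alpha_g^{m}$ is the inner automorphism induced by $g^m\in N$, which is nontrivial whenever $N$ is nonabelian and $g^m\notin Z(N)$. So after the $\Gamma_i$-step you only know the action of $G$ on $N$ is unipotent, which inner automorphisms of $N$ already are; ``the finite outer part centralises $N$'' would need a Mal'cev-completion/stability-group argument and even then only identifies $\alpha_g$ with conjugation by a \emph{rational} point. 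Worse, your device for killing $N'$ presupposes a finite non-nilpotent subgroup $M\sub \C_G(N)$, and nothing available at that stage produces any finite non-nilpotent subgroup of $G$ at all: a priori $G$ could be torsion-free (compare the Klein bottle group $\Z\rtimes\Z$, which is f.g., non-nilpotent, virtually free abelian and torsion-free). Manufacturing the finite non-nilpotent factor is exactly what the proposition is supposed to do, so invoking $M$ here is circular. (The $\cyc{B,g}$-construction itself is sound and can be completed — one checks $\cyc{B,g}\cap V_i=B+\Z g^m$ and uses $\ker(\gamma-1)\cap(\gamma-1)V_i=0$ to find the escaping coboundary — but it only delivers triviality on the factors.)

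Second, the final splitting step rests on the false principle ``a free abelian quotient splits.'' Extensions of $\Z^r$, $r\geq 2$, by a finite group need not split: in $\tilde H=\cyc{x,y\mid [x,y]=z,\ z^2=1,\ z \text{ central}}$ the torsion subgroup $K=\cyc{z}$ is finite and normal, $\tilde H/K\cong\Z\times\Z$ is free abelian, $Z(\tilde H)=\cyc{x^2,y^2,z}$ has finite index and $\tilde H'=K$ is finite — every intermediate conclusion you have at that point — yet $K$ has no complement whatsoever, since any lifts of a basis of $\tilde H/K$ have commutator $z$. This group is nilpotent, so it is not a counterexample to the proposition, but it shows that centrality-of-finite-index plus finiteness of $G'$ cannot yield your complement $A$: the meta-$\mathfrak{Nil}$-Hamiltonian hypothesis must be re-invoked to kill the cohomological obstruction, and your sketch never does so after the central claim. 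This is precisely where the paper proceeds differently: it fixes primes $q>p>|G/N|$, applies Schur--Zassenhaus in $\bar G=G/N^{pq}$ to get $\bar G\cong\bar N\rtimes\bar K$, observes that $\bar N^p\bar K$ and $\bar N^q\bar K$ must be non-nilpotent (else $G\cong N^p\times K$ would be nilpotent) and hence \emph{normal}, forcing $\bar K\nor\bar G$ and $K\cap N,\ [K,N]\sub N^{pq}$; letting $q$ vary and using $\bigcap_{q>p}N^{pq}=1$ for f.g. torsion-free nilpotent $N$ gives $G\cong N\times K$ directly, after which $N\cong G/K$ is a torsion-free Dedekind group, hence abelian — so abelianness of $N$, the splitting, and the finiteness of $K\cong G/N$ all come out of one normality argument with no splitting lemma needed. (A small further mismatch: your construction would produce some central free abelian complement $A$, not $H_G$ itself as the statement demands.)
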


\begin{proof}
Suppose that $H$ is a torsion-free nilpotent subgroup of finite index in $G$. Hence $|G: H_G|$ is also finite. In particular, $N=H_G$ is finitely generated, torsion-free nilpotent. Suppose that $p$ and $q$ are primes such that $q>p>|G/N|$. Since $|N:N^{pq}|$ is finite, $|G/N^{pq}|$ is also finite. Since $|\bar{G}: \bar{N}|=|G/N|<p$, where $\bar{G}=G/N^{pq}$, then $|\bar{G}: \bar{N}|$ and $|\bar{N}|$ are relatively prime, $\bar{G}\cong \bar{N}\rtimes \bar{K}$ where $\bar{K}\cong G/N$,  by Schur-Zassenhaus Theorem.

Since Sylow subgroups of $\bar{N}$ are characteristic, ${\bar{N}}^p\bar{K}$ and ${\bar{N}}^q\bar{K}$ are proper subgroups of $\bar{G}$. If, for example, ${\bar{N}}^p\bar{K}$ is nilpotent, then $K\cap N^p$ and $ [K, N^p]$ are both contained in $N^{pq}$. Since $N$ is a finitely generated torsion-free nilpotent group, according to \cite[5.2.21]{Rob}, $\bigcap_{q>p}N^{pq}\sub\bigcap_{q>p}N^q=1$. Therefore $N^p\cap K=[K,N^p]=1$ and hence $G\cong N^p\times K$ is nilpotent, because $\bar{K}\cong K$. This is a contradiction. Therefore,  ${\bar{N}}^p\bar{K}$ and ${\bar{N}}^q\bar{K}$ both are non-nilpotent and so are normal in $\bar{G}$. Hence $\bar{K}\nor\bar{G}$, thus $\bar{K}$ acts trivially on $\bar{N}$. Then  both $K\cap N$ and $ [K, N]$ are contained in $N^{pq}$ and hence in  $N^{q}$. So again, we will have $G\cong N\times K$. Since $G/K\cong N$ is torsion-free Dedekind group, it is abelian. Therefore, $N$ is a free abelian group of finite rank. By Theorem~\ref{T-3.10}, $K$ is soluble.
\end{proof}

The group $\Z\times S_3$ is a simplest non-nilpotent example of  Proposition~\ref{P-5.5}.

\begin{theorem}\label{T-5.6}
Let $G$ be a finitely generated non-nilpotent meta-$\mathfrak{Nil}$-Hamiltonian group and $H\sub G$ be nilpotent of finite index. Then $G$ is soluble and $G'$ is finite.
\end{theorem}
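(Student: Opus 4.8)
The plan is to reduce the statement directly to Proposition~\ref{P-5.5}, whose hypotheses demand a \emph{torsion-free} nilpotent subgroup of finite index; since the given $H$ may itself have torsion, the crux is to extract from $H$ a torsion-free nilpotent subgroup that still has finite index in $G$.

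First I would note that, as $|G:H|$ is finite and $G$ is finitely generated, $H$ is finitely generated, and being nilpotent it is residually finite with finite torsion subgroup $T=T(H)$ (standard properties of finitely generated nilpotent groups, see \cite{Rob}). For each element $1\neq t\in T$, residual finiteness provides a finite-index normal subgroup $M_t\sub H$ avoiding $t$. Then $M=\bigcap_{1\neq t\in T}M_t$ is a finite intersection of finite-index subgroups, so $M$ has finite index in $H$ and hence in $G$, while $M\cap T=1$. Every torsion element of $M$ lies in $T(H)=T$, so $M$ is torsion-free; being a subgroup of $H$ it is nilpotent. Thus $M$ is a torsion-free nilpotent subgroup of finite index in $G$.

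At this point $G$ meets every hypothesis of Proposition~\ref{P-5.5}: it is finitely generated, non-nilpotent, meta-$\mathfrak{Nil}$-Hamiltonian, and has the torsion-free nilpotent finite-index subgroup $M$. That proposition then delivers the conclusion directly, namely that $G$ is soluble and $G'$ is finite.

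The step I expect to be the genuine obstacle is exactly the production of the torsion-free subgroup, and it is what dictates the whole approach. The tempting shortcut---passing to $\bar G=G/T$ and applying Proposition~\ref{P-5.5} there---can fail, because $\bar G$ may turn out to be nilpotent: for instance, taking $H=\Z\times A_3$ inside $G=\Z\times S_3$ gives $T=A_3$ and $\bar G\cong\Z\times\Z_2$, which is nilpotent, so the meta-$\mathfrak{Nil}$-Hamiltonian hypothesis on $\bar G$ becomes vacuous and the proposition no longer applies. Locating the torsion-free finite-index subgroup \emph{inside} $G$ via residual finiteness avoids this difficulty and preserves the non-nilpotency of $G$ on which Proposition~\ref{P-5.5} relies.
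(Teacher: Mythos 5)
Your proof is correct, and it takes a genuinely different route from the paper's. You go \emph{down} to a torsion-free nilpotent subgroup of finite index inside $G$ itself, extracted from $H$ via the standard residual-finiteness argument for finitely generated nilpotent groups, and then apply Proposition~\ref{P-5.5} to $G$ directly. The paper instead goes \emph{up}: it forms the core $H_G$, takes its finite torsion subgroup $T_H$, and applies Proposition~\ref{P-5.5} to the quotient $G/T_H$, which contains the torsion-free nilpotent finite-index subgroup $H_G/T_H$ (implicitly using that the class of meta-$\mathfrak{Nil}$-Hamiltonian groups is quotient closed, a fact your route never needs). The pitfall you identify --- that the quotient may be nilpotent, as in your example $G=\Z\times S_3$, $T=A_3$ --- is exactly the case the paper must then handle by a second, more delicate argument: it chooses a smallest $1\neq T_1\sub T_H$ with $G/T_1$ nilpotent, a maximal subgroup $M$ of $G$ with $G=T_1M$ (via \cite[4.4.5(ii)]{LeRob}), and shows $G/T_M$ is non-nilpotent for $T_M$ the torsion subgroup of $M_G$, so that Proposition~\ref{P-5.5} applies to that quotient instead. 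Your subgroup-based reduction eliminates this case split entirely and is shorter and cleaner; the cost is the (entirely standard) appeal to residual finiteness, which the paper's quotient route avoids. One small point of hygiene: when $G$ is finite your subgroup $M$ is trivial, and Proposition~\ref{P-5.5} covers that situation only degenerately ($M_G=1$, $G\cong K$); it would be tidier to dispatch the finite case first by Theorem~\ref{T-3.10}, exactly as the paper opens its proof, and then assume $G$ infinite so that $M\neq 1$.
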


\begin{proof}
According to Theorem~\ref{T-3.10}, we can assume that $G$ is infinite. As $|G : H|$ is finite, $H_G$ is a finitely generated nilpotent group, thus $G$ is polycyclic-by-finite. Hence $T_H$ the torsion subgroup of $H_G$ is finite by~\cite[2.49(ii)]{DKS}. As $H_G/T_H\neq 1$ is a torsion free nilpotent subgroup of $G/T_H$, if $G/T_H$ is non-nilpotent, according to Proposition~\ref{P-5.5}, $(G/T_H)'$ is a finite soluble group. Therefore $G'$ is a finite soluble group and so $G$ is soluble.
Assume that $G/T_H$ is nilpotent. Again $G$ is soluble. Since $G$ is non-nilpotent, there exists smallest subgroup $1\neq T_1$ of $T_H$ such that $G/T_1$ is nilpotent and $T_1\nsub\Phi(G)$ by~\cite[4.4.5(ii)]{LeRob}.
Let $M$ be a maximal subgroup of $G$ such that $G=T_1M$. Assume that $T_M$ is the torsion subgroup of $M_G$. If $G/T_M$ is nilpotent, then $G/T_1\cap T_M$ is nilpotent, which is a contract to chosen of $T_1$. So $G/T_M$ is non-nilpotent and $(H_G\cap M_G)T_M/T_M$ is a torsion free nilpotent subgroup of finite index of $G/T_M$. Now by Proposition~\ref{P-5.5}, $(G/T_M)'$ is finite, therefore $G'$ is finite.
\end{proof}

If a meta-$\mathfrak{Nil}$-Hamiltonian group $G$ has a finite non-nilpotent subgroup $X$, since $G/X$ is a Dedekind group, then $G'$ is finite.

\begin{corollary}\label{C-5.7}
Let $G$ be a locally graded meta-$\mathfrak{Nil}$-Hamiltonian group. If $G$ has a finitely generated minimal non-nilpotent subgroup, then $G'$ is finite.
\end{corollary}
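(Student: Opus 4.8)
The plan is to reduce the statement to the case of a finite non-nilpotent normal subgroup and then exploit the Dedekind-quotient argument recorded in the remark immediately preceding the corollary. Let $X$ be a finitely generated minimal non-nilpotent subgroup of $G$. First I would observe that the class of locally graded groups is subgroup closed: if $Y$ is any non-trivial finitely generated subgroup of $X$, then $Y$ is a non-trivial finitely generated subgroup of $G$ and hence, by local gradedness of $G$, contains a proper subgroup of finite index. Thus $X$ is itself a finitely generated locally graded minimal non-nilpotent group, and Corollary~\ref{C-4.3} forces $X$ to be finite.

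Having upgraded $X$ from finitely generated to finite, I would next note that $X$ is non-nilpotent, so the meta-$\mathfrak{Nil}$-Hamiltonian hypothesis gives $X\nor G$. The key structural step is to show that $G/X$ is a Dedekind group. Let $H/X$ be an arbitrary subgroup of $G/X$, so that $X\sub H\sub G$. Since $X$ is non-nilpotent and nilpotency is inherited by subgroups, $H$ cannot be nilpotent; hence $H$ is a non-nilpotent subgroup of $G$ and therefore $H\nor G$. Consequently $H/X\nor G/X$, so every subgroup of $G/X$ is normal, i.e. $G/X$ is Dedekind.

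Finally I would use that the commutator subgroup of a Dedekind group is trivial or of order $2$, in particular finite (a Dedekind group is either abelian or of the form $Q_8\times E\times O$ with $E$ an elementary abelian $2$-group and $O$ abelian of odd order, whose derived subgroup has order at most $2$). Thus $(G/X)'=G'X/X\cong G'/(G'\cap X)$ is finite. Since $X$ is finite, $G'\cap X$ is finite as well, and an extension of a finite group by a finite group is finite; therefore $G'$ is finite.

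The argument becomes essentially routine once Corollary~\ref{C-4.3} has been applied, and the only point demanding a little care is the verification that $X$ inherits local gradedness, so that Corollary~\ref{C-4.3} is indeed applicable. This is exactly the step that converts the finitely generated hypothesis into the finiteness of $X$ required by the preceding remark; the remainder is the standard Dedekind-quotient computation.
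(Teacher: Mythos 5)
Your proof is correct and takes essentially the same route as the paper: the paper's proof is exactly the combination of Corollary~\ref{C-4.3} (to make the finitely generated minimal non-nilpotent subgroup $X$ finite) with the remark immediately preceding the corollary, namely that a finite non-nilpotent $X$ is normal with $G/X$ Dedekind, so $(G/X)'\cong G'/(G'\cap X)$ and hence $G'$ is finite. Your only additions --- verifying that local gradedness passes to the subgroup $X$ and spelling out the Dedekind-quotient computation --- are details the paper leaves implicit, and both are correct.
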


If $G$ is insoluble such that all whose non-nilpotent subgroups are permutable, then $G''$ is perfect by \cite[Lemma 2.1]{ADE1}. In the following propositions  with the meta-$\mathfrak{Nil}$-Hamiltonian assumption, we will have a stronger result for $G$.

\begin{proposition}\label{P-5.8}
Let $G$ be an insoluble meta-$\mathfrak{Nil}$-Hamiltonian group. Then the following statements are equivalent.
\begin{itemize}
\item[(i)] $G$ is perfect.
\item[(ii)] $G$ is minimal non-nilpotent.
\end{itemize}
\end{proposition}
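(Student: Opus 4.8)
The plan is to establish the two implications separately. The implication (ii)$\Rightarrow$(i) is immediate, while (i)$\Rightarrow$(ii) is the substantive direction and will rest on Lemma~\ref{L-5.4}(i). Throughout, note that since $G$ is insoluble it is in particular non-nilpotent, so $G$ itself is a non-$\mathfrak{Nil}$ subgroup; this will be used to exclude the degenerate case in which the relevant family of subgroups is empty.

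For (ii)$\Rightarrow$(i), I would argue by contraposition. Suppose $G$ is minimal non-nilpotent but not perfect, so that $G'$ is a \emph{proper} subgroup of $G$. By minimality $G'$ is nilpotent, and since $G/G'$ is abelian we would conclude that $G$ is soluble, contradicting insolubility. Hence $G' = G$, that is, $G$ is perfect.

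For (i)$\Rightarrow$(ii), the key observation is that perfection collapses the lower central series: since $G = G' = \gamma_2(G)$ we get $\gamma_3(G) = [\gamma_2(G),G] = [G,G] = G' = G$. Now apply Lemma~\ref{L-5.4}(i) with $\mathfrak{X} = \mathfrak{Nil}$ (which is subgroup closed, and $G$ is meta-$\mathfrak{Nil}$-Hamiltonian by hypothesis). Writing $\mathfrak{N}$ for the family of non-nilpotent subgroups of $G$ and $I = \bigcap_{X\in\mathfrak{N}} X$, the lemma gives $\gamma_3(G) \sub I$, whence $I = G$. Because $G$ is insoluble, it is non-nilpotent, so $G \in \mathfrak{N}$; then $I = G$ forces every $X \in \mathfrak{N}$ to satisfy $G = I \sub X \sub G$, i.e.\ $X = G$. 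Thus $G$ is the only non-nilpotent subgroup of itself, so every proper subgroup is nilpotent, and since $G$ is non-nilpotent it is minimal non-nilpotent.

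I do not expect a genuine obstacle here, as the heavy lifting has been front-loaded into Lemma~\ref{L-5.4} (through its Dedekind-quotient argument showing $\gamma_3(G)\sub I$); the proposition is then a short deduction. The only points that require care are the two elementary reductions noted above: that perfection yields $\gamma_3(G) = G$, and that the equality $I = G$ is logically equivalent to minimal non-nilpotency once one knows $G \in \mathfrak{N}$. It is precisely the insolubility hypothesis that guarantees $G \in \mathfrak{N}$ and thereby rules out the empty-family case.
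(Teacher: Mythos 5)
Your proposal is correct and takes essentially the same approach as the paper: your (ii)$\Rightarrow$(i) is the paper's argument (a proper $G'$ would be nilpotent, hence soluble, forcing $G$ soluble), and your (i)$\Rightarrow$(ii) merely routes through Lemma~\ref{L-5.4}(i) the very Dedekind-quotient computation the paper inlines, where a proper normal non-nilpotent $X$ gives $G/X$ Dedekind and hence the contradiction $G'\neq G$. Passing via $\gamma_3(G)=G\sub I$ is a harmless repackaging of that same step, not a different method.
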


\begin{proof}
(i)$\rightarrow$(ii) Assume that $G$ is perfect and $X\sub G$ is non-nilpotent. Since $G/X$ is Dedekind group we have the contradiction $G'\neq G$, if  $X\neq G$. Therefore, $G$ is minimal non-nilpotent.

(ii)$\rightarrow$(i) Suppose that $G$ is minimal non-nilpotent. Since  $G'$ is not soluble, so is not proper subgroup of $G$. Therefore $G=G'$.
\end{proof}

\begin{proposition}\label{P-5.8.5}
Let $G$ be an insoluble meta-$\mathfrak{Nil}$-Hamiltonian group. Then the following statements are equivalent.
\begin{itemize}
\item[(i)] $G$ is perfect with a maximal subgroup.
\item[(ii)] $G$ is infinite finitely generated and $G/\Phi(G)$ is non-abelian and simple. 
\end{itemize}
\end{proposition}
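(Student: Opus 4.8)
The plan is to establish the two implications separately, with \textrm{(i)}$\Rightarrow$\textrm{(ii)} as the substantive direction. Throughout I would lean on Proposition~\ref{P-5.8}, which for insoluble meta-$\mathfrak{Nil}$-Hamiltonian groups identifies being perfect with being minimal non-nilpotent; this lets me pass freely between the two descriptions.

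For \textrm{(i)}$\Rightarrow$\textrm{(ii)}, suppose $G$ is perfect with a maximal subgroup $M$, so by Proposition~\ref{P-5.8} it is minimal non-nilpotent. The first step is to note that $G$ cannot be locally nilpotent: by \cite[12.1.5]{Rob} a locally nilpotent group satisfies $G'\sub\Phi(G)$, so perfectness would force $G=\Phi(G)\sub M$, contradicting that $M$ is proper. The key step, which I expect to carry the most weight, is deducing that $G$ is finitely generated. Here I would argue by contradiction: if $G$ were not finitely generated, then every finite subset would generate a proper subgroup, which is nilpotent since $G$ is minimal non-nilpotent; hence $G$ would be locally nilpotent, contradicting the previous step. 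With finite generation secured, I would rule out $G$ being finite using the classical fact (already used in the discussion following Theorem~\ref{T-3.10}) that every finite minimal non-nilpotent group is soluble, which is incompatible with insolubility. Thus $G$ is an infinite finitely generated minimal non-nilpotent group, and Theorem~\ref{NW2} immediately yields that $G/\Phi(G)$ is non-abelian and simple, establishing \textrm{(ii)}.

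For the converse \textrm{(ii)}$\Rightarrow$\textrm{(i)}, suppose $G$ is infinite, finitely generated, with $G/\Phi(G)$ non-abelian simple. Finite generation guarantees, via the standard Zorn's lemma argument, that every proper subgroup lies in a maximal subgroup; in particular $G$ has a maximal subgroup and $\Phi(G)\neq G$. It then remains to show $G$ is perfect. Since $G/\Phi(G)$ is non-abelian simple it is perfect, so $G'\Phi(G)=G$. I would invoke the non-generator property of the Frattini subgroup in the finitely generated setting: if $G'$ were proper it would be contained in some maximal subgroup $M$, and then $G=G'\Phi(G)\sub M$ would be a contradiction. Hence $G=G'$ is perfect, completing \textrm{(i)}.

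The main obstacle is precisely the finite-generation step in \textrm{(i)}$\Rightarrow$\textrm{(ii)}, since without it neither Theorem~\ref{NW2} nor the solubility result for finite minimal non-nilpotent groups can be applied. The crucial leverage is that the existence of a maximal subgroup together with perfectness rules out local nilpotency, which is exactly what closes the contrapositive: a non-finitely-generated minimal non-nilpotent group is necessarily locally nilpotent.
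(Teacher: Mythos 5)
Your proposal is correct, and in the substantive direction \textrm{(i)}$\Rightarrow$\textrm{(ii)} it takes a genuinely different route from the paper. The paper, after invoking Proposition~\ref{P-5.8} exactly as you do, kills the infinitely generated case in one stroke by citing \cite[4.8]{NeWi}: an infinitely generated minimal non-nilpotent group (with a maximal subgroup) has abelian derived subgroup, which is incompatible with $G$ being perfect and insoluble. You instead argue elementarily: if $G$ were not finitely generated, every finitely generated subgroup would be proper, hence nilpotent, so $G$ would be locally nilpotent; then the fact \cite[12.1.5]{Rob} quoted in the paper's preliminaries gives $G'\sub\Phi(G)\sub M$ for the assumed maximal subgroup $M$, contradicting perfectness. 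Your route replaces the specialized Newman--Wiegold result by a textbook fact already stated in the paper, at the cost of one extra (easy) step; the paper's citation is shorter but less self-contained. You also make explicit two points the paper glosses over: why $G$ is infinite (finite minimal non-nilpotent groups are soluble, as noted after Theorem~\ref{T-3.10}, which is incompatible with insolubility --- the paper's proof simply asserts ``infinite''), and the entire implication \textrm{(ii)}$\Rightarrow$\textrm{(i)}, which the paper dismisses as straightforward and which you correctly carry out via the existence of maximal subgroups in a finitely generated group, $G=G'\Phi(G)$ from the perfectness of the simple quotient $G/\Phi(G)$, and the non-generator property of $\Phi(G)$. Both arguments then conclude identically with Theorem~\ref{NW2}.
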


\begin{proof}
(i)$\rightarrow$(ii) By Proposition~\ref{P-5.8}, $G$ is minimal non-nilpotent. If $G$ is not finitely generated, by \cite[4.8]{NeWi}, $G'$ is abelian, a contradiction. Therefore $G$ is infinite finitely generated group and so, $G/\Phi(G)$ is an infinite and simple group by Theorem~\ref{NW2}.

(ii)$\rightarrow$(i) straightforward.
\end{proof}

\begin{proposition}\label{P-5.9}
Let $G$ be an insoluble meta-$\mathfrak{Nil}$-Hamiltonian group. If $G$ is not perfect, then $G''=\gamma_3(G)$ is a perfect minimal non-nilpotent group. Furthermore either $G''$ is finitely generated and its Frattini factor is an infinite and simple group or $G''$ is Fitting $p$-group for some prime $p$.
\end{proposition}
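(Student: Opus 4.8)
The strategy is to identify $G''$ precisely and then run it through the Newman--Wiegold theory of minimal non-nilpotent groups. First I would record that $G''$ is insoluble: if $G''$ were soluble then $G/G''$, being metabelian, would force $G$ to be soluble, contrary to hypothesis. In particular $G''$ is non-nilpotent. Since $G''=[G',G']\sub[G,G']=\gamma_3(G)$ always holds, only the reverse inclusion needs work, and for this I would invoke Lemma~\ref{L-5.4}(i): writing $I$ for the intersection of all non-nilpotent subgroups of $G$, we have $\gamma_3(G)\sub I$, while $I\sub G''$ because $G''$ is itself one of the non-nilpotent subgroups being intersected. Hence $G''=\gamma_3(G)=I$, and Lemma~\ref{L-5.4}(i) shows this common subgroup is minimal non-nilpotent. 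Perfectness is then forced: were $(G'')'$ proper in $G''$ it would be nilpotent, making $G''$ soluble; so $(G'')'=G''$.

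Next I would set up the dichotomy, noting that $G''$ is itself an insoluble, perfect, meta-$\mathfrak{Nil}$-Hamiltonian group (its only non-nilpotent subgroup is itself) and that it is infinite, since every finite minimal non-nilpotent group is a soluble Schmidt group. I would split on whether $G''$ is finitely generated. If it is, then being an infinite finitely generated minimal non-nilpotent group, Theorem~\ref{NW2} applies and yields that $G''/\Phi(G'')$ is non-abelian and simple, giving the first alternative. (Equivalently, this is precisely the case in which $G''$ possesses a maximal subgroup, and then Proposition~\ref{P-5.8.5} applied to $G''$ delivers the same conclusion.)

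For the remaining case I would assume $G''$ is not finitely generated and aim to show it is a Fitting $p$-group. Every finitely generated subgroup is then proper, hence nilpotent, so $G''$ is locally nilpotent; in particular its torsion elements form a characteristic subgroup $T$. I would first show $T=G''$: if $T$ were proper it would be nilpotent, and every proper subgroup of $G''/T$ has the form $H/T$ with $H\sub G''$ proper, hence nilpotent, so $G''/T$ would be a non-trivial torsion-free locally nilpotent group all of whose proper subgroups are nilpotent; by Smith's theorem~\cite{Smith} such a group is finite, hence trivial, a contradiction. Thus $G''$ is a torsion locally nilpotent group and so is the direct product of its primary components $T_p$, each characteristic. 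Decomposing $G''=(G'')'$ through this product gives $T_p=(T_p)'$, so every $T_p$ is perfect; a non-trivial proper $T_p$ would be both nilpotent and perfect, hence trivial, so exactly one component survives and $G''$ is a $p$-group.

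It remains to prove that this perfect, locally finite, minimal non-nilpotent $p$-group is a Fitting group, that is, $G''=\Fit(G'')$. For any $x$ with $\cyc{x}^{G''}\neq G''$ the normal closure $\cyc{x}^{G''}$ is a proper --- hence nilpotent --- normal subgroup containing $x$, so $x\in\Fit(G'')$; the difficulty is precisely to rule out elements whose normal closure exhausts $G''$. I expect this to be the main obstacle, and I would resolve it by appealing to the finer structure of non-finitely-generated minimal non-nilpotent groups in~\cite{NeWi}, namely that every proper subgroup is subnormal, so that each element lies in a proper normal --- hence nilpotent --- subgroup. This yields $G''=\Fit(G'')$ and completes the second alternative.
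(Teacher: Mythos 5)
Your identification of $G''=\gamma_3(G)$ as a perfect minimal non-nilpotent group is correct and essentially the paper's argument (the paper routes perfectness through Proposition~\ref{P-5.8} instead of arguing directly, a cosmetic difference), and your finitely generated case matches the paper's use of Theorem~\ref{NW2} and Proposition~\ref{P-5.8.5}. The genuine gap is in the final step of the infinitely generated case --- exactly the step you flag as the main obstacle. The result you attribute to Newman--Wiegold, that in a non-finitely-generated minimal non-nilpotent group every proper subgroup is subnormal (so that every element lies in a proper normal, hence nilpotent, subgroup), is not in \cite{NeWi} and is false as stated. The locally dihedral $2$-group $D=C_{2^\infty}\rtimes\cyc{t}$ is an infinitely generated minimal non-nilpotent group in which $\cyc{t}^D=D$: conjugation gives $t^a=ta^2$, so the normal closure of $t$ contains all squares in $C_{2^\infty}$, hence all of $C_{2^\infty}$. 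Thus $\cyc{t}$ is not subnormal, $t$ lies in no proper normal subgroup, and indeed $\Fit(D)=C_{2^\infty}\neq D$, so $D$ is not a Fitting group. Infinite generation alone therefore cannot deliver the Fitting property; perfectness (equivalently, the absence of maximal subgroups) must enter at precisely this point, and your proposal never brings it to bear in the final step.

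The gap is repairable from material you have already assembled. Since you have shown $G''$ is locally nilpotent, any maximal subgroup of $G''$ would be normal by \cite[12.1.5]{Rob} (quoted as Theorem 2.4 in the paper), hence of prime index, contradicting $G''=(G'')'$; so $G''$ has no maximal subgroups. Smith's Theorem 3.3(i),(ii) \cite{Smith} --- the citation the paper itself uses --- then states that a minimal non-nilpotent group without maximal subgroups is a Fitting $p$-group, which yields both the $p$-group conclusion and the Fitting property at once and makes your torsion/primary-decomposition detour (itself sound, granting the paper's statement of Smith's Theorem 1) unnecessary. The paper's own route is slightly different: it excludes maximal subgroups by citing \cite[4.8]{NeWi} (a non-finitely-generated minimal non-nilpotent group with a maximal subgroup has abelian derived subgroup, hence is not perfect) and then invokes Smith's Theorem 3.3 wholesale; note the counterexample $D$ above is perfectly consistent with this, since $C_{2^\infty}$ is maximal in $D$ and $D'$ is abelian. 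As a minor further point, in the finitely generated case both you and the paper assert that the Frattini factor is \emph{infinite} while Theorem~\ref{NW2} only says non-abelian and simple; the infinitude follows from Corollary~\ref{C-4.3}, since a finite Frattini factor would give $G''$ a proper subgroup of finite index and force $G''$ to be finite.
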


\begin{proof}
According to Proposition~\ref{P-5.8}, $G$ has a proper non-nilpotent subgroup. Therefore, $\gamma_3(G)$ is included in all non-nilpotent subgroups of $G$. Since $G''\sub\gamma_3(G)$ is non-nilpotent, $G''=\gamma_3(G)$ is minimal non-nilpotent by Lemma~\ref{L-5.4}. By reusing  Proposition~\ref{P-5.8}, $G''$ is perfect.
If $G$ is finitely generated, by Theorem~\ref{NW2}, the Frattini factor group of $G''$ is infinite simple group. Otherwise, if $G''$ has a maximal subgroup, by \cite[4.8]{NeWi}, is not perfect, a contradiction. Hence $G''$ dose not have any maximal subgroup and so is Fitting $p$-group for some prime $p$ by~\cite[Theorem 3.3 (i) and (ii)]{Smith}.
\end{proof}

 Atlihan and et al. proved that {\it if all non-nilpotent subgroups of locally graded group $G$ are permutable, then $G$ is soluble} (see \cite[Theorems A, B]{ADE1} and \cite[ Theorem 1.1]{ADE2}). Therefore a locally graded meta-$\mathfrak{Nil}$-Hamiltonian group is soluble. 
In the following lemma we have a simple proof  for solubility of locally graded meta-$\mathfrak{Nil}$-Hamiltonian groups.

\begin{lemma}\label{L-5.10}
Let $G$ be a locally graded meta-$\mathfrak{Nil}$-Hamiltonian group. Then $G$ is soluble.
\end{lemma}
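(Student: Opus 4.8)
The plan is to argue by contradiction and extract from $G$ a perfect minimal non-nilpotent subgroup, which the local gradedness hypothesis will then forbid. So suppose $G$ is insoluble. If $G$ is perfect, then Proposition~\ref{P-5.8} shows that $G$ is itself minimal non-nilpotent; if $G$ is not perfect, then Proposition~\ref{P-5.9} shows that $L=G''=\gamma_3(G)$ is a perfect minimal non-nilpotent group. In either case $G$ contains a \emph{perfect minimal non-nilpotent} subgroup $L$ (namely $L=G$ or $L=G''$), and since $L\sub G$ and $G$ is locally graded, $L$ is again locally graded. Being perfect and non-trivial, $L$ is insoluble, and I will show this is impossible.

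The next step is to split according to whether $L$ is finitely generated. If $L$ is finitely generated, then by Corollary~\ref{C-4.3} a locally graded finitely generated minimal non-nilpotent group is finite, so $L$ is finite; but every finite minimal non-nilpotent group is soluble (as already recalled before Theorem~\ref{T-3.10}), which contradicts $L$ being perfect and non-trivial. If $L$ is not finitely generated, then by \cite[4.8]{NeWi} its derived subgroup $L'$ is abelian, so $L$ is (meta)soluble, again contradicting perfectness. Either alternative yields a contradiction, and hence $G$ must be soluble.

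The delicate point, and the step I expect to be the main obstacle, is precisely the non-finitely-generated alternative: a priori the perfect minimal non-nilpotent group $L$ could have no maximal subgroup and be a Fitting $p$-group (compare the dichotomy in Proposition~\ref{P-5.9} and Proposition~\ref{P-5.8.5}), and such a locally nilpotent group is automatically locally graded, so local gradedness gives no leverage against it directly. The resolution is that this Fitting $p$-group situation is subsumed under the non-finitely-generated case and is killed not by local gradedness but by \cite[4.8]{NeWi}, which forces $L'$ to be abelian and thereby contradicts $L=L'$. Local gradedness itself enters the argument only through Corollary~\ref{C-4.3}, to dispose of the finitely generated case (which, absent local gradedness, could realise an infinite group with simple Frattini factor such as a Tarski monster). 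I would therefore take care to state explicitly that a perfect minimal non-nilpotent group cannot be infinitely generated, so that the two cases above are genuinely exhaustive.
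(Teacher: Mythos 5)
Your reduction of the insoluble case to a perfect minimal non-nilpotent subgroup $L$ (via Propositions~\ref{P-5.8} and~\ref{P-5.9}), and your handling of the finitely generated branch (Corollary~\ref{C-4.3} plus the solubility of finite minimal non-nilpotent groups), are both sound. The gap sits exactly at the point you flagged as delicate, and your proposed resolution of it fails: the claim that \cite[4.8]{NeWi} forces $L'$ to be abelian for \emph{every} infinitely generated minimal non-nilpotent group is a misreading of that result. As it is used throughout this paper (in the proofs of Propositions~\ref{P-5.8.5} and~\ref{P-5.9} and of Theorem~\ref{T-5.11}(ii)), \cite[4.7, 4.8]{NeWi} applies only to minimal non-nilpotent groups that \emph{possess a maximal subgroup}. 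Indeed Proposition~\ref{P-5.9}, which you yourself invoke, records precisely the surviving alternative: if $G''$ has no maximal subgroup, then by \cite[Theorem 3.1]{Smith} it is a Fitting $p$-group, and nothing you cite excludes such a group from being perfect (Smith's statement on the absence of proper finite-index subgroups is proved only for soluble groups). Since an infinitely generated minimal non-nilpotent group is locally nilpotent (every finitely generated subgroup is proper, hence nilpotent) and therefore automatically locally graded, the hypothesis of Lemma~\ref{L-5.10} gives no leverage in that branch, as you correctly observed. So your case analysis does not close: the perfect Fitting $p$-group possibility remains open, and the paper itself deliberately leaves it open in Proposition~\ref{P-5.9}; note also that Theorem~\ref{T-5.11}(ii) has to work hard, using ad hoc features of its situation, to \emph{prove} the existence of a maximal subgroup before applying \cite[4.7, 4.8]{NeWi}.

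The paper's actual proof bypasses this structural dead end by an entirely different mechanism. In a meta-$\mathfrak{Nil}$-Hamiltonian group every non-nilpotent subgroup is normal, so $G$ has finitely many normalizers of non-nilpotent subgroups, and \cite[Corollary 2.3]{MaGi} then yields a soluble subgroup $M$ of finite index in the locally graded group $G$. If $G$ were insoluble it could have no nilpotent subgroup of finite index (otherwise the finite quotient by its core is soluble by Theorem~\ref{T-3.10} and $G$ would be soluble), so $M$ is non-nilpotent, hence normal; every subgroup of $G$ containing $M$ is likewise non-nilpotent and normal, so $G/M$ is a Dedekind group and $G$ is soluble, a contradiction. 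To salvage your approach you would need an independent argument ruling out a perfect, locally nilpotent, minimal non-nilpotent Fitting $p$-group here; none is available from the results you cite.
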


\begin{proof}
Suppose that $G$ is insoluble, so it does not have a nilpotent subgroup of finite index (by using Theorem~\ref{T-3.10}). According to~\cite[Corollary 2.3]{MaGi}, $G$ has a soluble subgroup $M$ of finite index. Therefore, $M$ is non-nilpotent and so is normal, which contradicts the insolubility of $G$. 
\end{proof}

\begin{theorem}\label{T-5.11}
Let $G$ be a non-nilpotent finitely generated locally graded meta-$\mathfrak{Nil}$-Hamiltonian group. If $G'$ is infinite, $G$ has a nilpotent maximal subgroup of infinite order and index. In particular, the Frattini subgroup and all finite normal subgroups of $G$ are nilpotent. According to the symbols of Lemma~\ref{L-5.4}, 
\begin{itemize}
\item[(i)] If $G/X$ is non-abelian for some non-nilpotent subgroup $X$, then $J$ is nilpotent. Therefore, if $J$ is non-nilpotent, $G'\sub I$; 
\item[(ii)] If $I$ is non-nilpotent, then $I$ is infinitely generated minimal non-nilpotent with a maximal subgroup, $G'\sub I$ and $I'=\gamma_n(G)=\gamma_{\infty}(G)$ is abelian for some $n$.  Furthermore $G'/\gamma_{\infty}(G)$ is finite cyclic $p$-group for some prime $p$.
\end{itemize}
\end{theorem}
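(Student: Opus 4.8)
The plan is to begin by locating $\Phi(G)$, $I$ and $J$ precisely. By Lemma~\ref{L-5.10} the group $G$ is soluble, and since $G$ is finitely generated with $G'$ infinite, the contrapositive of Theorem~\ref{T-5.6} shows that $G$ has no nilpotent subgroup of finite index; in particular $G$ is not nilpotent-by-finite, so Lemma~\ref{L-5.4}(ii) gives $\gamma_3(G)\sue I\sue J$. Next I would note that $G/\gamma_3(G)$ is a finitely generated nilpotent group, hence residually finite, so its finite residual is trivial and therefore $J\sue\gamma_3(G)$; thus $\gamma_3(G)=I=J$. This common subgroup is infinite, for if $\gamma_3(G)$ were finite then its centraliser would have finite index and be central-by-nilpotent, hence nilpotent, making $G$ nilpotent-by-finite. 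Finally Lemma~\ref{L-4.4} gives $|G:J|=\infty$, so $G/I$ is infinite as well.

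\textbf{A nilpotent maximal subgroup.} Since $G$ is meta-$\mathfrak{Nil}$-Hamiltonian, every non-normal subgroup is nilpotent, so every non-normal maximal subgroup is nilpotent. As $G$ is a non-nilpotent finitely generated soluble group, the Frattini theory of such groups provides a non-normal maximal subgroup (were every maximal subgroup normal, then $G'\sue\Phi(G)$ and $G$ would be nilpotent, cf.~\cite{LeRob}); fix such an $M$, which is therefore nilpotent. It has infinite index, since otherwise $G$ would be nilpotent-by-finite, and it has infinite order, because $MI$ equals $M$ (so $I\sue M$ is infinite) or $G$ (so $M/(M\cap I)\cong G/I$ is infinite). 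Now $\Phi(G)\sue M$ is nilpotent, and for a finite normal subgroup $F$ the product $FM$ is $M$ or $G$; in the latter case $G/F\cong M/(M\cap F)$ would be nilpotent with $F$ finite, again forcing $G$ nilpotent-by-finite, so $F\sue M$ and $F$ is nilpotent.

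\textbf{The dichotomy $G/I$ Dedekind, and parts (i)--(ii).} The key structural observation for (ii) is that when $I$ is non-nilpotent every subgroup containing $I$ is non-nilpotent, hence normal, so $G/I$ is a Dedekind group; being finitely generated and infinite it cannot be Hamiltonian, so $G/I$ is abelian and $G'\sue I$. This is precisely the assertion $G'\sue I$ of (ii), and part (i) is its contrapositive: if $G/X$ is non-abelian for some non-nilpotent $X$, then $I\sue X$ forces $G/I$ non-abelian, so $I=J$ must be nilpotent. In the non-nilpotent case Lemma~\ref{L-5.4}(i) makes $I$ minimal non-nilpotent, and since $I$ is infinite, Corollary~\ref{C-4.3} shows $I$ is infinitely generated.

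\textbf{The internal structure of $I$ and the main obstacle.} It remains to analyse the infinitely generated soluble minimal non-nilpotent group $I$. Here I would appeal to Smith's structure theory: either $I$ possesses a maximal subgroup, or by~\cite[Theorem 3.1(v), Theorem 3.3]{Smith} it has no proper subgroup of finite index and is a Fitting $p$-group. To exclude the second alternative I would use that $I=\gamma_3(G)$ is the finite residual of the finitely generated group $G$ together with the finiteness of the lower central factors of $G$; \emph{ruling out this Fitting $p$-group case is the step I expect to be the main obstacle}, since in the present situation $[I,G]=I$, which defeats the naive finite-index argument and forces one to exploit the $\Z[I/\Fit(I)]$-module structure furnished by Smith. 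Once a maximal subgroup is available, the Schmidt-type decomposition $I=\Fit(I)\rtimes C$ with $C$ cyclic identifies $I'$ with the eventually constant term $\gamma_n(I)=\gamma_\infty(I)$ of the lower central series of $I$ and shows it abelian, while $I/I'\cong C$ together with $G'\sue I$ yields that $G'/\gamma_\infty$ is a finite cyclic $p$-group.
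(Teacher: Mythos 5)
Your reductions through part (i) are sound, and in places tighter than the paper's own proof: the identification $I=J=\gamma_3(G)$ via residual finiteness of finitely generated nilpotent groups does not appear in the paper; your proof that the maximal subgroup is infinite (via $MI=M$ or $MI=G$) replaces the paper's more roundabout argument that a finite $N$ would bound the index of every finite-index subgroup and so contradict Lemma~\ref{L-4.4}; and your derivation of (i) as the contrapositive of the dichotomy ``$I$ non-nilpotent $\Rightarrow G/I$ Dedekind, finitely generated and infinite, hence abelian'' is correct (the paper instead notes that a non-abelian Dedekind quotient is torsion, hence finite, again contradicting Lemma~\ref{L-4.4}). The genuine gap is the one you flag yourself in part (ii): you never exclude Smith's no-maximal-subgroup alternative, and the paper needs no $\Z[I/\Fit(I)]$-module analysis to do it --- it uses exactly the subgroup you already constructed. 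If $I$ has no maximal subgroup, then by \cite[Theorem 3.1]{Smith} $I$ is a $p$-group with $I/I'\cong C_{p^\infty}$. If $I'\nsub N$, maximality of $N$ gives $G=I'N$, hence $I=I'(I\cap N)$, and Smith's theorem applied again forces $I=I\cap N\sub N$; then $G'\sub I\sub N$ makes the maximal subgroup $N$ normal, a contradiction. So $I'\sub N$, and $(N\cap I)/I'$ is a proper, hence finite, subgroup of $C_{p^\infty}$; choosing $H/I'$ strictly between $(N\cap I)/I'$ and $I/I'$ yields $G=HN$ with $|G:N|\le |H/I'|$ finite, contradicting that $N$ has infinite index. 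The missing idea, in short, is to play the quasicyclic quotient $I/I'$ against the infinite-index nilpotent maximal subgroup from the first part of the theorem.

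There is a second, unflagged gap at the end: the statement asserts $I'=\gamma_n(G)=\gamma_{\infty}(G)$, the lower central series of $G$, whereas your Schmidt-type analysis only produces $I'=\gamma_n(I)=\gamma_{\infty}(I)$. The paper bridges this by showing $I'\nsub N$ (if $I'\sub N$, then since $I/I'$ is finite and $G=IN$, the index $|G:N|\le |I:I'|$ would be finite, a contradiction), so $G=I'N$ and $G/I'\cong N/(N\cap I')$ is nilpotent; this gives $\gamma_n(G)\sub I'=\gamma_{\infty}(I)\sub\gamma_{\infty}(G)$, whence the asserted equalities and the finiteness of $G'/\gamma_{\infty}(G)\sub I/I'$. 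Finally, a tension your own reduction makes visible: iterating your residual-finiteness argument gives $J\sub\gamma_n(G)$ for every $n$, hence $I=J=\gamma_{\infty}(G)$; since $I\neq I'$ (a perfect soluble group is trivial), this collides with the conclusion $I'=\gamma_{\infty}(G)$, so the hypothesis of case (ii) appears to be unsatisfiable under the theorem's standing assumptions. That tension sits in the paper itself rather than in your sketch, but it does confirm that the ``main obstacle'' you identified is not one that a more clever module-theoretic construction could sidestep by exhibiting an actual example.
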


\begin{proof}
As $G$ is soluble by Lemma~\ref{L-5.10} and is non-nilpotent finitely generated group by hypotsis, so it has a non-normal nilpotent maximal subgroup $N$ (by  \cite[4.4.5(ii)]{LeRob}). By Theorem~\ref{T-5.6}, $N$ is not of finite index, so every finite normal subgroup of $G$ is contained in $N$ and so is nilpotent. If $N$ is finite, since for any subgroup $X$ of finite index, $X$ is non-nilpotent, so $G=XN$. Then $|G: X|\leq |N|$, thus $G$ has a finite number of  subgroups of finite index. Therefore, $J$ has finite index, which contradicts Lemma~\ref{L-4.4}.

(i) In this case, the factor group $G/J$ is non-abelian. If $J$ is non-nilpotent then $G/J$ is a finitely generated Dedekind torsion group, so it is finite which contradicts Lemma~\ref{L-4.4}. Therefore, $J$ is nilpotent.

(ii) According to Lemma~\ref{L-5.4}, $I$ is minimal non-nilpotent, so by (i), $G'\sub I$ and $I$ is not finitely generated by Corollary~\ref{C-4.3}. 

Assume that $I$ has no maximal subgroup, so by~\cite[Theorem 3.1]{Smith}, $I$ is a countable $p$-group for some prime $p$ and $I/I'\cong C_{p^\infty}$. If $I'\nsub N$, then $G=I'N$ and so $I=I'(I\cap N)$. By reuse of ~\cite[Theorem 3.1]{Smith}, $I=I\cap N$ and so $G'\sub N$, which contradicts the non-normality of $N$.
As $N\cap I$ is a proper subgroup of $I$, so $(N\cap I)/I'$ is a finite. Assume that $H/I'$ is a proper subgroup of $I/I'$ such that properly contains $(N\cap I)/I'$. Then $G=HN$ and $|G:N|\leq |H/I'|$ is finite which is a contradiction. Therefore $I$ has a maximal subgroup, so $I'$ is abelian and $I/I'$ is a finite cyclic $p$-group, for some prime $p$ (see \cite[4.7, 4.8]{NeWi}), thus $G''$ is abelian and $I'=\gamma_3(I)$ by~\cite[Corollary 2.12]{NeWi}. If $I'\sub N$, since $I/I'$ is finite, so $N$ is of finite index, which is a contradiction. So $I'\nsub N$  and $G/I'$ is nilpotent. Then $\gamma_n(G)\sub I'=\gamma_{\infty}(I)\sub\gamma_{\infty}(G)$ for some $n$, therefore $I'=\gamma_n(G)=\gamma_{\infty}(G)$ and so $G'/\gamma_{\infty}(G)$ is a finite cyclic $p$-group.
\end{proof}

\subsection*{Acknowledgement}
We dedicate this article to Professor Francesco de Giovanni, who conducted valuable research in promoting the theory of groups and trained many researchers.

\end{document}